\newtheorem{theorem}{Theorem}[section]
\newtheorem{thm}[theorem]{Theorem}
\newtheorem{lemma}[theorem]{Lemma}
\newtheorem{cor}[theorem]{Corollary}
\newtheorem{prop}[theorem]{Proposition}
\newtheorem{conj}[theorem]{Conjecture}
\newtheorem{remark}[theorem]{Remark}
\newtheorem*{kummer}{Kummer's Theorem}
\DeclareMathOperator{\Ext}{Ext} \DeclareMathOperator{\THH}{THH}
\DeclareMathOperator{\THC}{THH} \DeclareMathOperator{\Tor}{Tor}
\DeclareMathOperator{\rTHH}{\overline{THH}}
\newcommand{\A}{\mathcal{A}}
\newcommand{\F}{\mathbb{F}}
\newcommand{\Q}{\mathbb{Q}}
\newcommand{\Z}{\mathbb{Z}}
\newcommand{\sma}{\wedge}
\newcommand{\TC}{TC}
\begin{document}

\title{Topological Hochschild Homology of $\ell$ and $ko$}

\author{Vigleik Angeltveit, Michael Hill and Tyler Lawson}

\begin{abstract}
We calculate the integral homotopy groups of $\THH(\ell)$ at any prime and of $\THH(ko)$ at $p=2$, where $\ell$ is the Adams summand of the connective complex $p$-local $K$-theory spectrum and $ko$ is the connective real $K$-theory spectrum.
\end{abstract}

\maketitle

\section{Introduction}\label{sec:Intro}
\subsection{Motivation}
Topological Hochschild homology is a generalization of Hochschild homology to the context of structured ring spectra.  In analogy with Hochschild homology, it helps classifying deformations and extensions of structured ring spectra.

In addition, the work of numerous authors on the cyclotomic trace now gives machinery allowing the computation of the algebraic $K$-theory of connective ring spectra $R$ \cite{BHM, HeMa97In}.  The first necessary input to these computations is the topological Hochschild homology $\THH(R)$.

After localization at a fixed prime $p$, the connective complex $K$-theory spectrum $ku$ has a summand $\ell$, known as the Adams summand, and when $p=2$, $\ell=ku_{(2)}$.  McClure and Staffeldt carried out the computation of the mod $p$ homotopy of $\THH(\ell)$ at primes $p \geq 3$ \cite{McSt}.  Their method was to first compute the $E_2$-term of the Adams spectral sequence converging to the mod $p$ homotopy, and then use knowledge about the $K(1)$-homology to obtain necessary information about the differentials. These methods lead to difficulties at the prime $2$ because the mod $2$ Moore spectrum is not a ring spectrum. The computations were extended by Rognes and the first author to the case $p=2$ \cite{AnRo}.

Ausoni and Rognes have computed the $V(1)$-homotopy of the topological cyclic homology $\TC(\ell)$ and of the algebraic $K$-theory $K(\ell)$ \cite{AuRo}, beginning by computing the homotopy of $V(1) \sma \THH(\ell)$.  This leads to problems at the prime $3$, where $V(1)$ is not a ring spectrum, and at the prime $2$, where $V(1)$ does not exist, and so their computations were only valid for $p \geq 5$.  Ausoni later extended their $\THH$ computations to $p=3$ and to computations for $ku$, rather than for the Adams summand
\cite{Au05}.

McClure and Staffeldt stated in \cite{McSt} their intent to continue their project: ``In the sequel we will investigate the integral homotopy groups of $\THH(\ell)$ using our present results as a starting point.''  While extensive computations were carried out,
this sequel never appeared.

The aim of this paper is to use some of the recent advances in structured ring spectra to both simplify the previous computations of $\THH(\ell)$, in some cases removing any restrictions on the prime, and to exhibit a complete integral computation of $\THH_\ast(\ell)$ as an $\ell_\ast$-module.  One finds that there is a weak equivalence between the spectrum $V(1) \sma \THH(\ell)$, for $p$ odd, and $\THH(\ell;H\F_p)$, and the latter is the realization of a simplicial commutative $H\F_p$-algebra.  It should be noted that this method does not simplify the computations of topological cyclic homology and algebraic $K$-theory, as neither of these spectra inherit the structure of module spectra over $\ell$.

In addition, there are Bockstein spectral sequences for computing the homotopy of $\THH(\ell;H\Z_{(p)})$ and that of $\THH(\ell;\ell/p)$ from $\THH_*(\ell;H\F_p)$, and for computing $\THH_\ast(\ell)$ from $\THH_\ast(\ell;\ell/p)$ or from $\THH_\ast(\ell;H\Z_{(p)})$. It happens that the integral computation of the homotopy groups of $\THH(\ell)$ is determined by the requirement that the two Bockstein spectral
sequences converging to $\THH_\ast(\ell)$ agree.  We highly recommend that the reader experiment with this method at $p=2$ to gain insight into the final result.

Similarly, this ``dueling Bockstein'' method can be used to give a complete computation of $\THH_\ast(ko;ku)$ $2$-locally, and the results are strikingly similar to the $2$-local computation of $\THH_\ast(ku)$.  There is then a final $\eta$-Bockstein spectral sequence computing the $2$-local homotopy of $\THH(ko)$ which is directly computable. One, perhaps unexpected, result of this computation is that $\eta^2$ acts by zero on the homotopy of $\rTHH(ko)$, the summand of $\THH(ko)$ complementary to $ko$.

\subsection{Organization} We begin in Section~\ref{sec:prelim} by summarizing the key tools we will need to start the computations and stating our main results. In Section~\ref{sec:BSS1and2}, we run the first two Bockstein spectral sequences. This provides the necessary input to allow us to run the last two spectral sequences. In Section~\ref{sec:SSB3}, we analyze the third Bockstein spectral sequence, using it to get information about the possible structure of the homotopy groups. Section~\ref{sec:THCku} is a brief digression into topological Hochschild cohomology, and in it, we find elements in $\THC^\ast_S(\ell)$ that pair nicely with the generators we found in early sections. In Section~\ref{sec:SSB4}, we use the vanishing and cyclicity results we found in Section~\ref{sec:SSB3} to find all of the differentials and extensions in the fourth Bockstein spectral sequence. This completes the computation of $\THH(\ell)$.

We round out our computations in Section~\ref{sec:ko}, where we calculate the $2$-local homotopy of $\THH(ko)$. Finding $\THH_\ast(ko;ku)$ requires an analysis similar to that for $\THH_\ast(ku)$, and we pass from $\THH_\ast(ko;ku)$ to $\THH_*(ko)$ by analyzing the $\eta$-Bockstein spectral sequence and resolving hidden extensions.

\section{Preliminary Remarks and Statement of Results}\label{sec:prelim}

\subsection{Algebraic Preliminaries}\label{sec:algprelim}
As a global piece of notation, we will write $a \doteq b$ when $a$ is equal to $b$ up to multiplication by a $p$-local unit.

We begin with a few lemmas which allow us to state the kinds of B\"okstedt spectral sequences we will use. If $R$ is an $S$-algebra and $M$ is an $R$-bimodule, let $\THH(R;M)$ denote the derived smash product $M \sma_{R \sma R^{op}} R$ \cite[\S~IX]{EKMM}. If instead of $S$-algebras we consider $E$-algebras for a fixed $E_\infty$ ring spectrum $E$, then we will denote the relative $\THH$ by $\THH^E(R;M)$.

\begin{lemma} \label{lem:THHbimod}
Suppose $R$ is a commutative $S$-algebra and $M$ is an $R$-module given the commutative bimodule structure.  Then there is a weak equivalence
\[
\THH(R;M) \simeq M \sma_R \THH(R).
\]
\end{lemma}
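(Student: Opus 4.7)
The plan is to deduce the equivalence from a base-change / associativity argument for the derived smash product. Since $R$ is commutative, the multiplication $\mu \colon R \sma R \to R$ is a map of commutative $S$-algebras; the ``commutative bimodule structure'' on $M$ means precisely that the $R \sma R^{op} = R \sma R$-module structure on $M$ is obtained by restriction of the $R$-module structure along $\mu$. This is the key hypothesis that makes all the identifications below carry the correct module structures.

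First I would rewrite $M \simeq M \sma_R R$, where the left factor uses its $R$-module structure and the right factor $R$ is regarded as an $(R, R \sma R)$-bimodule (left $R$-action by multiplication; right $R \sma R$-action factoring through $\mu$). Then I would apply associativity of the derived smash product over the chain $R \to R \sma R$ to obtain
\[
\THH(R;M) \;=\; M \sma_{R \sma R} R \;\simeq\; (M \sma_R R) \sma_{R \sma R} R \;\simeq\; M \sma_R (R \sma_{R \sma R} R) \;=\; M \sma_R \THH(R),
\]
where the final identification uses that $R \sma_{R \sma R} R$ is $\THH(R)$ by definition.

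A more concrete alternative is to model both sides using the cyclic bar construction: $\THH(R;M)$ is the realization of $[n] \mapsto M \sma R^{\sma n}$, while $M \sma_R \THH(R)$ is the realization of $[n] \mapsto M \sma_R R^{\sma(n+1)} \simeq M \sma R^{\sma n}$. Checking that the simplicial face and degeneracy maps match identifies the two simplicial spectra, and hence their realizations, giving the equivalence.

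The main obstacle is entirely bookkeeping: one must track which of the several possible $R$-module structures on $R$ and on $R \sma R$ is being used at each step, and verify that all smash products are correctly derived. In the EKMM framework \cite[\S~IX]{EKMM}, the base-change equivalence is a standard consequence of the adjunction between restriction and extension of scalars for commutative $S$-algebras, so once the bimodule-structure bookkeeping is checked the proof is essentially formal.
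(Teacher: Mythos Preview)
Your proposal is correct and follows essentially the same approach as the paper: both rewrite $M \simeq M \sma_R R$ and then reassociate $(M \sma_R R) \sma_{R \sma R^{op}} R \simeq M \sma_R (R \sma_{R \sma R^{op}} R) = M \sma_R \THH(R)$. Your additional cyclic-bar-construction argument is a reasonable alternative but is not in the paper.
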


\begin{proof}
We show a chain of weak equivalences whose composite is the one in question. By definition, we have
\[
\THH(R;M)\simeq M\sma_{R \sma R^{op}}R\simeq (M \sma_R R)\sma_{R\sma R^{op}} R,
\]
and reassociating gives that this is equivalent to
\[
M\sma_R(R \sma_{R \sma R^{op}} R)\simeq M \sma_R\THH(R).\qedhere
\]
\end{proof}

\begin{lemma}
Suppose $R \to Q$ is a map of $S$-algebras and $M$ is a $Q$-$R$ bimodule, given an $R$-$R$ bimodule structure by pullback.  Then there is a weak equivalence
\[
\THH(R;M) \simeq M \sma_{Q \sma R^{op}} Q.
\]
\end{lemma}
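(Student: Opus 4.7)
The plan is to build a chain of weak equivalences in the style of Lemma~\ref{lem:THHbimod}. First I would observe that the map $R \to Q$ induces a map of $S$-algebras $R \sma R^{op} \to Q \sma R^{op}$, and that the prescribed $R$-$R$ bimodule structure on $M$ is precisely the restriction along this map of its $Q$-$R$ bimodule structure, so the pulled-back right $R \sma R^{op}$-action on $M$ agrees with what one gets from its right $Q \sma R^{op}$-action.

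The key identification I would aim for is
\[
(Q \sma R^{op}) \sma_{R \sma R^{op}} R \simeq Q
\]
as left $Q \sma R^{op}$-modules. To get this, I would rewrite $Q \sma R^{op} \simeq Q \sma_R (R \sma R^{op})$, viewing $Q$ as a right $R$-module via $R \to Q$ and $R \sma R^{op}$ as a left $R$-module via its first factor, then reassociate and apply the basic identity $(R \sma R^{op}) \sma_{R \sma R^{op}} R \simeq R$ to collapse the middle.

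Combined with the standard change-of-rings equivalence
\[
N \sma_A P \simeq N \sma_B (B \sma_A P)
\]
for a ring map $A \to B$, applied with $A = R \sma R^{op}$, $B = Q \sma R^{op}$, $N = M$, and $P = R$, this yields the desired chain
\[
\THH(R;M) \simeq M \sma_{R \sma R^{op}} R \simeq M \sma_{Q \sma R^{op}} \big((Q \sma R^{op}) \sma_{R \sma R^{op}} R\big) \simeq M \sma_{Q \sma R^{op}} Q.
\]

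The main thing to watch will be bookkeeping of the left/right module structures across the reassociations --- in particular verifying that the left $Q \sma R^{op}$-action on $(Q \sma R^{op}) \sma_{R \sma R^{op}} R$ coming from the left factor really does correspond, under the identification with $Q$, to the $Q$-$R$ bimodule structure on $Q$ given by the map $R \to Q$. Once the bimodule conventions are pinned down this is formal, exactly as in Lemma~\ref{lem:THHbimod}; the reassociations are valid in the derived category under the usual cofibrancy hypotheses of \cite{EKMM}, and no further input is required.
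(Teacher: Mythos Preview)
Your proposal is correct and essentially identical to the paper's proof. Both arguments expand $M$ via the unit equivalence $M \simeq M \sma_{Q \sma R^{op}} (Q \sma R^{op})$ (which is exactly your change-of-rings identity), reassociate, and then collapse $(Q \sma R^{op}) \sma_{R \sma R^{op}} R \simeq Q$; you have simply made the bimodule bookkeeping and the last collapse more explicit than the paper does.
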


\begin{proof}
Similarly to the previous lemma, this follows from the following chain of weak equivalences:
\[
\THH(R;M) \simeq M \sma_{R \sma R^{op}} R \simeq (M \sma_{Q \sma R^{op}} Q
\sma R^{op}) \sma_{R \sma R^{op}} R,
\]
and reassociating gives that this is equivalent to
\[
M \sma_{Q \sma R^{op}} (Q \sma R^{op} \sma_{R \sma R^{op}} R) \simeq M
\sma_{Q \sma R^{op}} Q.\qedhere
\]
\end{proof}

This gives a K\"unneth spectral sequence computing the homotopy of this derived smash product \cite[IV 4.1]{EKMM}.

\begin{cor} \label{cor:KunnethSS}
Under these circumstances, there is a K\"unneth spectral sequence with $E_2$-term
\[
\Tor_{\ast\ast}^{Q_\ast R^{op}} (M_\ast, Q_\ast) \Rightarrow \THH_\ast(R;M).
\]
\end{cor}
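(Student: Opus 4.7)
The plan is to reduce this corollary to a direct application of the standard Künneth spectral sequence in the category of module spectra over an $S$-algebra, as recorded in \cite[IV.4.1]{EKMM}. The previous lemma already did the conceptual work: it identifies $\THH(R;M)$ with the derived smash product $M \sma_{Q \sma R^{op}} Q$ over the $S$-algebra $A := Q \sma R^{op}$. With this identification in hand, the spectral sequence is not a new construction but rather an off-the-shelf tool.

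Concretely, I would proceed in three short steps. First, invoke the preceding lemma to replace $\THH(R;M)$ by $M \sma_{A} Q$, where both $M$ and $Q$ are viewed as $A$-modules in the evident way (on $M$ via the given $Q$-$R$ bimodule structure, and on $Q$ via left multiplication from $Q$ and right multiplication pulled back along $R \to Q$). Second, cite the EKMM Künneth spectral sequence applied to the $S$-algebra $A$ and the $A$-modules $M$ and $Q$: it provides a convergent spectral sequence
\[
E_2^{s,t} = \Tor_{s,t}^{A_\ast}(M_\ast, Q_\ast) \Longrightarrow \pi_{s+t}(M \sma_{A} Q).
\]
Third, translate notation: $A_\ast = (Q \sma R^{op})_\ast$, which the paper abbreviates as $Q_\ast R^{op}$, and the abutment is $\THH_\ast(R;M)$ by the previous step. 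This yields exactly the displayed spectral sequence.

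The only nontrivial point, and what I expect to be the main (and in fact only) obstacle, is the bookkeeping needed to apply \cite[IV.4.1]{EKMM}: that reference constructs the spectral sequence under mild cofibrancy/cell hypotheses, so strictly speaking one should pass to a cell $A$-module replacement of $M$ (or $Q$) before forming the smash product. Since we are computing the derived smash product throughout, this replacement does not change the homotopy type and hence does not affect either the $E_2$-term or the abutment. Beyond that, everything is formal: the corollary is essentially a relabeling of the EKMM Künneth spectral sequence via the presentation of $\THH(R;M)$ furnished by the preceding lemma.
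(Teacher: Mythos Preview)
Your proposal is correct and follows exactly the paper's approach: the corollary is stated without proof in the paper, preceded only by the sentence ``This gives a K\"unneth spectral sequence computing the homotopy of this derived smash product \cite[IV 4.1]{EKMM},'' which is precisely the citation you invoke after applying the preceding lemma.
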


This expression for topological Hochschild homology often leads to strictly simpler computations than are usually carried out by means of the B\"okstedt spectral sequence \cite{Bo1, Bo2}.  For instance, if $R=Q=H\F_p$, we obtain a spectral sequence starting from
\[
\Tor^{A_\ast}_{\ast\ast}(\F_p, \F_p) \Rightarrow \THH_\ast(\F_p).
\]
Here $A_\ast$ is the dual Steenrod algebra.  This can be identified as the part of the B\"okstedt spectral sequence consisting of the primitives under the $A_\ast$-comodule action.

There are dual statements for topological Hochschild cohomology that we will need in Section~\ref{sec:THCku}. Let $\THC_{E}(R;M)$ denote the derived function spectrum $F_{R \sma_{E} R^{op}}(R,M)$. Even if $E=S$, the sphere spectrum, we will include it in the notation to distinguish from the topological Hochschild homology spectrum.

\begin{lemma}[\cite{EKMM}]
\label{lem:dualize}
Suppose $R \to Q$ is a map of $E$-algebras and $M$ is a $Q$-$R$ bimodule, given an $R$-$R$ bimodule structure by pullback.  Then there is a weak equivalence
\[
\THC_E(R;M) \simeq F_{Q \sma_E R^{op}} (Q,M).
\]
\end{lemma}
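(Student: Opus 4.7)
The plan is to run the argument of the preceding $\THH$ lemma in reverse: where that lemma uses associativity of the smash product to collapse an $R$-balanced factor, I will use the change-of-rings adjunction for function spectra to achieve the analogous collapse.

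First I would invoke the standard EKMM adjunction: for a map $A \to B$ of $E$-algebras, an $A$-module $L$, and a $B$-module $N$, there is a natural equivalence
\[
F_A(L, N) \simeq F_B(B \sma_A L, N).
\]
I would apply this with $A = R \sma_E R^{op}$, $B = Q \sma_E R^{op}$ (using the map induced by $R \to Q$ on the left factor), $L = R$ with its standard $A$-module structure, and $N = M$, which acquires its $B$-module structure from the $Q$-$R$ bimodule structure. By definition this rewrites
\[
\THC_E(R;M) = F_{R \sma_E R^{op}}(R,M) \simeq F_{Q \sma_E R^{op}}\bigl((Q \sma_E R^{op}) \sma_{R \sma_E R^{op}} R,\ M\bigr).
\]

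Second, I would identify the source $(Q \sma_E R^{op}) \sma_{R \sma_E R^{op}} R$ with $Q$ by exactly the reassociation used in the previous lemma. Since $Q \simeq Q \sma_R R$ as a right $R$-module, we have $Q \sma_E R^{op} \simeq Q \sma_R (R \sma_E R^{op})$ as a right $R \sma_E R^{op}$-module, and hence
\[
(Q \sma_E R^{op}) \sma_{R \sma_E R^{op}} R \simeq Q \sma_R (R \sma_E R^{op}) \sma_{R \sma_E R^{op}} R \simeq Q \sma_R R \simeq Q.
\]
Splicing the two displays together yields the claimed equivalence.

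The only real point to check, and where I expect the only obstacle, is that the $B$-module structure on $M$ used in the target of the adjunction agrees with the $Q \sma_E R^{op}$-module structure coming from the $Q$-$R$ bimodule structure. This is essentially tautological from the construction of the adjunction, but it is precisely the step where the hypothesis that $M$ is a $Q$-$R$ bimodule (rather than merely an $R$-$R$ bimodule that happens to factor through $Q$) gets used.
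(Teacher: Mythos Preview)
The paper does not actually supply a proof of this lemma; it simply attributes the statement to \cite{EKMM} and moves on. Your argument is correct and is precisely the dual of the proof the paper gives for the immediately preceding $\THH$ lemma: there the authors expand $M \simeq M \sma_{Q \sma R^{op}} (Q \sma R^{op})$ and reassociate to collapse $(Q \sma R^{op}) \sma_{R \sma R^{op}} R \simeq Q$, while you invoke the extension-of-scalars/restriction adjunction $F_{R \sma_E R^{op}}(R,M) \simeq F_{Q \sma_E R^{op}}((Q \sma_E R^{op}) \sma_{R \sma_E R^{op}} R, M)$ and then make the identical collapse of the source. The compatibility check you flag---that the $Q \sma_E R^{op}$-module structure on $M$ used in the adjunction is the one coming from the given $Q$-$R$ bimodule structure---is indeed tautological from how the adjunction is set up, so there is no gap.
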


This in turn gives a universal coefficients spectral sequence which we will use to compute $\THH_{E}^\ast(R;M)$ \cite[IV 4.1]{EKMM}.

\begin{cor}\label{cor:UnivCoefs}
Under these circumstances, there is a universal coefficient spectral sequence
\[
\Ext^{\ast\ast}_{\pi_\ast(Q\sma_E R^{op})} (Q_\ast, M_\ast) \Rightarrow \THC^{\ast}_E(R;M).
\]
\end{cor}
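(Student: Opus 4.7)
The plan is to reduce the statement to a direct application of the EKMM universal coefficients spectral sequence \cite[IV 4.1]{EKMM}, using Lemma~\ref{lem:dualize} as the bridge. The first step is to invoke Lemma~\ref{lem:dualize} to identify $\THC_E(R;M)$ with the function spectrum
\[
F_{Q\sma_E R^{op}}(Q,M).
\]
Here $A := Q \sma_E R^{op}$ is an $S$-algebra (indeed an $E$-algebra), and both $Q$ and $M$ are left $A$-modules: $Q$ via the multiplication of $Q$ against the map $R \to Q$, and $M$ via the $Q$-$R$ bimodule structure.

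The second step is simply to feed this data into the EKMM universal coefficients spectral sequence for function spectra over an $S$-algebra. That construction yields a conditionally convergent spectral sequence of the form
\[
\Ext^{\ast\ast}_{\pi_\ast A}\bigl(\pi_\ast Q,\pi_\ast M\bigr) \Rightarrow \pi_{-\ast}F_A(Q,M),
\]
which under the identification of the first step is exactly the claimed spectral sequence
\[
\Ext^{\ast\ast}_{\pi_\ast(Q\sma_E R^{op})}(Q_\ast,M_\ast) \Rightarrow \THC^{\ast}_E(R;M).
\]

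There is essentially no obstacle beyond bookkeeping: the only thing to verify is that the hypotheses of \cite[IV 4.1]{EKMM} are met, namely that $A = Q\sma_E R^{op}$ is a cell $S$-algebra (which can be arranged by cofibrant replacement) and that $Q$ and $M$ are cell $A$-modules up to weak equivalence. Once this cofibrancy is set up, the spectral sequence is constructed by resolving $Q$ by free $A$-modules, mapping into $M$, and taking homotopy; the bigrading is the standard one in which the internal grading records the shift of the free modules in the resolution and the external grading records homotopical degree. The only mildly subtle point is the sign convention for $\THC^\ast$ versus $\pi_{-\ast}$ of the function spectrum, but that is purely a matter of convention and requires no further argument.
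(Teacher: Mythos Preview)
Your proposal is correct and matches the paper's approach exactly: the paper simply cites \cite[IV 4.1]{EKMM} applied to the identification $\THC_E(R;M) \simeq F_{Q \sma_E R^{op}}(Q,M)$ from Lemma~\ref{lem:dualize}, which is precisely what you do.
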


\subsection{Method and Main Results}
Recall that as an algebra,
\[
\pi_\ast\ell=\Z_{(p)}[v_1],
\]
where $|v_1|=2p-2$. Our key technique is to play the reductions modulo $p$ and $v_1$ off of each other in computable ways.

Computations using the B\"okstedt spectral sequence or Corollary~\ref{cor:KunnethSS} allow us to see that
\[
\THH_\ast(\ell;H\F_p)=E(\lambda_1,\lambda_2)\otimes \F_p[\mu],
\]
where $|\lambda_i|=2p^{i}-1$, and $|\mu|=2p^2$ \cite{AnRo,McSt}. Moreover, since the bimodule $H\F_p$ is the quotient of $\ell$ by $p=v_0$ and $v_1$, we can find two intermediate $\ell$-modules between $\ell$ and $H\F_p$, namely Morava $k(1)=\ell/p$ and $H\Z_{(p)}=\ell/v_1$. This allows us to go from $\THH(\ell)$ to $\THH(\ell;H\F_p)$ in two ways:
\begin{equation*}
\xymatrix{{} & {\THH(\ell)} \ar[rd] \ar[ld] & {} \\ {\THH(\ell;H\Z_{(p)})}
\ar[rd] & {} & {\THH(\ell;k(1))} \ar[ld] & {}
\\ {} & {\THH(\ell;H\F_p)} & {}}
\end{equation*}

Each of the arrows in the above diagram gives a Miller-Novikov Bockstein spectral sequence going the other way \cite{Ra86}.
The construction of each is identical: for $M$ one of these bimodules and for $x$ either $p$ or $v_1$, we have a cofiber sequence of $\ell$-bimodules
\[
\Sigma^{|x|}M\xrightarrow{x} M\to M/x.
\]
Iterating this gives us a filtration of $M$, the filtration quotients of which are the iterated $|x|$-fold suspensions of $M/x$.
Thus we have the following spectral sequences:
\begin{eqnarray}
\THH_\ast(\ell;H \F_p)[v_1] & \Longrightarrow &
\THH_\ast\!\big(\ell;k(1)\big); \label{B1} \\
\THH_\ast(\ell;H \F_p)[v_0] & \Longrightarrow &
\THH_\ast(\ell;H\Z_{(p)})^\wedge_p; \label{B2} \\
\THH_\ast\!\big(\ell;k(1)\big)[v_0] & \Longrightarrow &
\THH_\ast(\ell)^\wedge_p; \label{B3} \\
\THH_\ast(\ell;H\Z_{(p)})[v_1] & \Longrightarrow & \THH_\ast(\ell).
\label{B4}
\end{eqnarray}
These are bigraded spectral sequences in which elements of $\THH_\ast(\ell;M)$ have bidegree $(\ast,0)$ and in which $v_0$ or $v_1$ have bidegrees $(0,1)$ or $(2p-2,1)$ respectively. These spectral sequences have Adams style differentials, and in all cases except the third, each of these spectral sequences can be realized as an Adams spectral sequence in an appropriate category of module spectra (to ensure equality on $E_1$, we choose a minimal resolution of our module).

We can understand the first two spectral sequences easily, and this gives us two spectral sequences which we can play against each other to understand $\THH_\ast(\ell)$. Moreover, since $\THH(\ell)$ is finitely generated, these results actually give $p$-local, rather than $p$-complete, information.

We recall that for a commutative $S$-algebra $R$ with a module $M$ given the commutative bimodule structure, there is a splitting in $R$-modules
\[
\THH(R;M) \simeq M \vee \rTHH(R;M).
\]
For convenience, we will often perform computations with $\rTHH(\ell)$ and exclude the factor of $\ell$ which splits off.

We will show how to completely understand $\THH_\ast(\ell)$ as an $\ell_\ast$-module.

\begin{thm}
As an $\ell_\ast$-module,
\[
\THH_\ast(\ell)=\ell_\ast\oplus \Sigma^{2p-1} F\oplus T,
\]
where $F$ is a torsion free summand and $T$ is an infinite direct sum of torsion modules concentrated in even degrees.
\end{thm}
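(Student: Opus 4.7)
The $\ell_\ast$ summand is immediate from the splitting $\THH(\ell) \simeq \ell \vee \rTHH(\ell)$ available for any commutative $S$-algebra, so the real work is to show $\rTHH_\ast(\ell) \cong \Sigma^{2p-1}F \oplus T$, with $F$ torsion-free and $T$ a direct sum of torsion modules concentrated in even degrees.

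The plan is to exploit the four-fold comparison afforded by the Bockstein spectral sequences (\ref{B1})--(\ref{B4}). First I would run the two ``easy'' spectral sequences (\ref{B1}) and (\ref{B2}), starting from the known input $\THH_\ast(\ell;H\F_p) = E(\lambda_1,\lambda_2)\otimes \F_p[\mu]$. In each case the differential pattern is forced by the Leibniz rule together with Adams-style sparsity constraints: only $\lambda_i$ can support nontrivial $v_0$- or $v_1$-Bocksteins, and checking bidegrees pins these down. This produces explicit descriptions of $\THH_\ast(\ell;k(1))$ and $\THH_\ast(\ell;H\Z_{(p)})$, each of which already breaks into a torsion-free piece coming from a suspended copy of $\lambda_1$ (of degree $2p-1$) and a polynomial/torsion piece built out of $\mu$ and $\lambda_1\lambda_2$.

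Next I would play the two remaining spectral sequences (\ref{B3}) and (\ref{B4}) against one another. Both converge to $\THH_\ast(\ell)^\wedge_p$, which equals $\THH_\ast(\ell)_{(p)}$ by finite generation, so their $E_\infty$-pages must present isomorphic associated graded modules. This dueling constraint is extremely rigid: any class forced to be torsion by one spectral sequence must be forced to be torsion (of matching order) by the other, and any permanent cycle in one must correspond to a permanent cycle in the other. Tracking the $\lambda_1$-generator, which has odd degree $2p-1$ and is $v_0$- and $v_1$-torsion-free up to high filtration, yields the suspended torsion-free summand $\Sigma^{2p-1}F$; no other odd-degree classes can survive as torsion-free because the exterior generator $\lambda_2$ is forced to support a Bockstein by a parity count on (\ref{B2}). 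For the torsion part, everything not in $F$ is killed by a power of $v_0$, and since the surviving generators after the two spectral sequences are products involving $\lambda_1\lambda_2$ and $\mu^k$ (both in even total degree), $T$ lives in even degrees.

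The main obstacle will be resolving the hidden multiplicative extensions in (\ref{B4}) and verifying that the torsion part genuinely splits off as a direct summand rather than merely sitting as an associated graded piece. This is where the topological Hochschild cohomology computation of Section~\ref{sec:THCku} is needed: dual classes in $\THC^\ast_S(\ell)$ pair with the generators of $T$ and detect enough structure to split each cyclic torsion module off individually. Combining the dueling-Bockstein calculation with these splittings yields the stated $\ell_\ast$-module decomposition.
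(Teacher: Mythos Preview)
Your outline follows the paper's strategy at a high level, but it mischaracterizes two of the key mechanisms and leaves a genuine gap regarding the odd-degree torsion.

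First, the ``dueling'' of spectral sequences (\ref{B3}) and (\ref{B4}) does not work by comparing $E_\infty$-pages and invoking rigidity. The paper instead extracts very specific facts from (\ref{B3})---that certain groups $\rTHH_{2p^{n+2}-2}(\ell)$ and $\rTHH_{2p^{n+2}}(\ell)$ vanish and that others are cyclic (Lemma~\ref{lem:zerocyclic})---and feeds these as constraints into (\ref{B4}) to force particular differentials. Your formulation ``any class forced to be torsion by one spectral sequence must be forced to be torsion (of matching order) by the other'' is too soft to pin anything down.

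Second, and more seriously, you do not account for the odd-degree torsion classes $a_i=\lambda_2\mu^{i-1}$ in $\rTHH_*(\ell;H\Z_{(p)})$. Your claim that ``the surviving generators after the two spectral sequences are products involving $\lambda_1\lambda_2$ and $\mu^k$'' is exactly what needs to be proved: the $a_i$ are present on the $E_1$-page of (\ref{B4}), and showing they all support differentials (Theorem~\ref{thm:B4diff}) is the heart of the argument. This is established by induction, using the vanishing results from (\ref{B3}) to force a differential on $p^{n-1}a_{p^n}$, and then using the cap product with classes $c_k\in\THC^*_S(\ell)$ to propagate that differential to all $a_{kp^{n-1}}$. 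Saying $\lambda_2$ supports a Bockstein in (\ref{B2}) only tells you the $a_i$ are $p$-torsion, not that they die in (\ref{B4}).

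Finally, the cap product with $\THC^*_S(\ell)$ is not used to split torsion summands off; it is used to transport differentials and to show the torsion blocks $T_{n,k}$ are mutually isomorphic (Theorem~\ref{thm:IsomPieces}). The splitting of $\Sigma^{2p-1}F$ from $T$ is automatic once you know they lie in complementary parities, which in turn follows from the rational computation (even classes are torsion) together with the elimination of the $a_i$ (odd torsion dies).
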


\subsection{The Torsion Free Part}
Since rational homotopy is rational homology, we can easily run the B\"okstedt spectral sequence computing the rational homotopy of $\THH(\ell)$. For degree reasons, the spectral sequence collapses with no possible extensions, and we have as isomorphism of $\ell_\ast\otimes\Q$-algebras
\[
\THH_\ast(\ell)\otimes\Q\cong\Q[\lambda_1,v_1]/\lambda_1^2,
\]
where $|\lambda_1|=2p-1$. This tells us exactly where all of the torsion free summands of $\THH_\ast(\ell)$ lie.

\begin{theorem}\label{thm:v1Divisibility}
The torsion free summand of $\rTHH_\ast(\ell)$ is $F\cdot\lambda_1$, where $F$ the $\ell_\ast$-module
\[
F = \ell_*\left[\frac{v_1^{p^k+\dots+p}}{p^k};\, k\geq 1\right] \subset \ell_*\otimes\Q.
\]
\end{theorem}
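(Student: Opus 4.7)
The plan is to determine the torsion-free summand of $\rTHH_*(\ell)$ by bracketing it between the rational computation and a Bockstein spectral sequence analysis. From the collapse of the rational B\"okstedt spectral sequence, $\THH_*(\ell)\otimes\Q\cong\Q[\lambda_1,v_1]/\lambda_1^2$, so the torsion-free summand of $\rTHH_*(\ell)$ injects into $\lambda_1\cdot(\ell_*\otimes\Q)$; writing it as $F\cdot\lambda_1$, the task reduces to identifying the $\ell_*$-submodule $F\subseteq\ell_*\otimes\Q$.

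To produce the divided generators $w_k = v_1^{p+\cdots+p^k}/p^k$, I would run the $v_0$-Bockstein spectral sequence \eqref{B3}, using as input the computation of $\THH_*(\ell;k(1))$ from \eqref{B1} in Section~\ref{sec:BSS1and2}. The polynomial generator $\mu$ of degree $2p^2$ interacts with $v_0$ through its iterated $p$-th powers, so that suitable monomials involving $\mu^{1+p+\cdots+p^{k-1}}$ support $v_0$-Bockstein differentials of length $k$ whose targets land on the torsion-free line $\lambda_1 v_1^{*}$. This forces $\lambda_1 v_1^{p+\cdots+p^k}$ to be $p^k$-divisible in $\THH_*(\ell)^\wedge_p$, and finite generation of $\THH(\ell)$ promotes the result to the $p$-local statement; closure under the $\ell_*$-action yields the containment $F\cdot\lambda_1\subseteq\rTHH_*(\ell)$. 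To rule out extra $p$-divisibility, I would play \eqref{B3} against \eqref{B4} in the ``dueling Bockstein'' spirit: the computation of $\THH_*(\ell;H\Z_{(p)})$ from \eqref{B2} combined with the $v_1$-Bockstein must recover the same answer on the torsion-free line, leaving only the denominators accounted for by the $w_k$. The cyclicity results of Section~\ref{sec:SSB3} exclude longer unexpected differentials, and the topological Hochschild cohomology pairings of Section~\ref{sec:THCku} detect each $w_k\lambda_1$ nontrivially, certifying that the generators are not further divisible.

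The main obstacle is pinning down the exact $v_0$-lengths of these iterated Bockstein differentials so as to produce the precise valuation pattern $p+p^2+\cdots+p^k$ rather than some coarser or finer divisibility. This demands careful tracking of the $p$-th power structure on $\mu$ through both \eqref{B1} and \eqref{B3}, separating its contribution from that of the exterior generator $\lambda_2$, and invoking the cyclicity and vanishing input of Section~\ref{sec:SSB3} to preclude longer differentials or hidden extensions between the (even-degree) torsion summand and the (odd-degree) torsion-free summand.
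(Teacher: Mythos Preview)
Your overall strategy---bracket using the rational computation and then pin down denominators via the interplay of \eqref{B3} and \eqref{B4}---is the right one and matches the paper. But the specific mechanism you describe for \eqref{B3} does not work, and fixing it is exactly where the content lies.

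You claim that in \eqref{B3} certain monomials in $\mu$ support length-$k$ $v_0$-Bockstein differentials landing on ``the torsion-free line $\lambda_1 v_1^{*}$.'' This line does not exist in the $E_1$-term of \eqref{B3}: the input is $\THH_*(\ell;k(1))$, in which $v_1^p\lambda_1=0$ (the tower on $x_{1,0}=\lambda_1$ is truncated at height $r(1)=p$), and $\mu$ itself has already been absorbed in \eqref{B1}. In fact Lemma~\ref{lem:ExtensionLemma} shows that in degree $2p^{n+2}-1$ the $E_1$-term of \eqref{B3} contains \emph{exactly one} class, namely $x_{n+2,0}v_1^{r(n)}$, so there are no differentials in or out of that degree at all. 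The role of \eqref{B3} is not to exhibit divisibility via differentials, but simply to certify that $\rTHH_{2p^{n+2}-1}(\ell)\cong\Z_{(p)}$.

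The divisibility is then read off from \eqref{B4}, and it comes from hidden extensions rather than differentials. After the differentials of Theorem~\ref{thm:B4diff} are accounted for, the $E_\infty$-term of \eqref{B4} in degree $2p^{i+2}-1$ still contains $v_0^i a_{p^i}$, the classes $v_1^{p^{i+1}+\cdots+p^{k+2}}v_0^k a_{p^k}$ for $0\le k<i$, and $v_1^{p^{i+1}+\cdots+p}\lambda_1$. Cyclicity forces hidden multiplications by $p$ chaining these together: $p\cdot a_1\doteq v_1^p\lambda_1$ and $p\cdot v_0^k a_{p^k}\doteq v_1^{p^{k+1}}v_0^{k-1}a_{p^{k-1}}$, which upon iteration gives precisely the generators $v_1^{p^k+\cdots+p}\lambda_1/p^k$. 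The $\THC$ cap product of Section~\ref{sec:THCku} plays no role in the torsion-free argument; it is used only to propagate differentials and extensions among the torsion $b$-classes.
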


Thus the classes $v_1^k \lambda_1$ become increasingly $p$-divisible as $k$ gets large.
We pause to mention the relation of this structure to known computations. At an odd prime McClure and Staffeldt already found (\cite[Theorem 8.1]{McSt}) that
\[
\THH(L) \simeq v_1^{-1} \THH(\ell)\simeq L \vee \Sigma L_{\Q},
\]
where $L$ is the periodic Adams summand. This can be seen directly here at any prime, as follows. Inverting $v_1$ in the homotopy of $\THH(\ell)$ leaves $L\sma_{\ell}\THH(\ell)=\THH(\ell;L)$. However, $L$ is the localization of the $\ell\sma{}\ell^{op}$-module $\ell$ obtained by inverting both images of $v_1$. Localization of modules (in the sense of inverting elements in homotopy) over a commutative ring spectrum commutes with taking smash product with other modules, and so we find that inverting $v_1$ yields
\[
\THH(\ell;L)\simeq L\sma_{L\sma{}L^{op}}L=\THH(L).
\]
Therefore, inverting $v_1$ in the homotopy of $\THH(\ell)$ yields
\[
\THH_\ast(L)=L_\ast\oplus v_1^{-1}\Sigma^{2p-1}F\cong L_\ast\oplus\Sigma(L_\Q)_\ast,
\]
recovering the McClure-Staffeldt result for odd primes and extending it to $p=2$.

\subsection{The Torsion Part}
The torsion is rather involved, but it can also be understood. It is concentrated in even degrees, and it follows a kind of tower-of-Hanoi pattern with increasingly complicated, inductively built, components.

We define a sequence of torsion modules $T_n$ for $n \geq 0$ as follows.  As an $\ell_*$-module, each $T_n$ has generators $g_w$ for all strings $w$ on letters $0,\dots, p-1$. We impose two kinds of relations. First, we require that $g_w = 0$ if $|w|>n$, where $|w|$ denotes the length of the string. Second, if we write $w \cdot w'$ for the concatenation of strings, we have
\[
p g_w =
\begin{cases}
v_1^{p^{(n-|w|+2)}}g_{w'} + g_{w \cdot 0} &\text{if } w = w' \cdot (p-1),\\
g_{w \cdot 0} &\text{otherwise.}
\end{cases}
\]
One can show inductively that that these relations imply that
\[
v_1^{p^{n-|w|+1}+\dots+p} g_w = 0
\]
for all $w$. The non-zero generators are graded by saying that if $w=a_1\dots a_k$, then
\[
|g_w|=2p^2(a_1p^{n-1}+\dots+a_kp^{n-k}).
\]

An easy example is that $T_0 = \ell_*/(p,v_1^p)$.

More generally, the modules $T_n$ have only finitely many nonzero elements.  The modules $T_n$ are self-dual; the duality is given by
\[
g_w \longleftrightarrow v_1^{p^{n-|w|+1}+\dots+p-1} g_{\bar w},
\]
where $\bar w$ is the string $w$ with each digit $a$ replaced by $p-1-a$.

The modules $T_n$ are more easily viewed through a recursive construction. First there are inclusions of direct summands $\Sigma^{2kp^{n+1}}T_{n-1}\hookrightarrow T_n$ given by $g_w\mapsto g_{k\cdot w}$, for $1\leq k\leq p-2$. There is also an inclusion of $T_{n-1}$ given by $g_w\mapsto g_{0\cdot w}$ and a projection $T_n\to \Sigma^{2(p-1)p^{n+1}}T_{n-1}$ given by
\[
g_w\mapsto\begin{cases}
g_{w'} & w=(p-1)\cdot w' \\
0 & \text{otherwise.}\end{cases}
\]
The structure of $T_n$ as a module is determined by having these summands, submodules, and quotient modules, together with a generator $g_{\varnothing}$ satisfying relations
\begin{align*}
p\cdot g_\varnothing &=g_0\\
v_1^{p^{n+1}}g_\varnothing &=g_{(p-1)0}-p\cdot g_{(p-1)}.
\end{align*}

One can view $T_n$ as being recursively constructed out of $p$ copies of $T_{n-1}$, where we glue together the first and last copies along a $v_1$-tower of length $p^{n+1}+\dots+p$.

\begin{theorem}\label{thm:Torsion}
The torsion summand of the homotopy of $\THH(\ell)$, as an $\ell_*$-module, is isomorphic to
\[
\bigoplus_{n\geq 0}\bigoplus_{k=1}^{p-1}
\Sigma^{2kp^{n+2} + 2(p-1)}T_n.
\]
In particular, for all $n\geq 1$ and $2\leq k\leq p$, the even dimensional homotopy between degrees $2kp^{n+2}-2p+1$ and $2kp^{n+2}+2p-3$ is zero.
\end{theorem}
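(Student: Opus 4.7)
The plan is to deduce the structure theorem by completing the analysis of spectral sequences (B3) and (B4) and playing them off each other ``dueling-Bockstein'' style. Using Theorem~\ref{thm:v1Divisibility} to subtract off the torsion-free summand $\ell_*\oplus\Sigma^{2p-1}F$, what remains in $\THH_*(\ell)$ is $p$-power torsion; since $\THH(\ell)$ is finitely generated in each degree, we may work $p$-completely. Spectral sequence (B3) is a $v_0$-Bockstein, which displays the $p$-tower structure, while (B4) is a $v_1$-Bockstein, which displays the $v_1$-tower structure. Since both converge to the same answer, the hidden $v_1$-extensions in (B3) must exactly account for the shortened $v_1$-towers that appear in (B4), and conversely. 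This consistency forces the mixed $p$-$v_1$ relations that define $T_n$.

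The main task is to build the generators $g_w$ and verify the defining relations
\[
p\cdot g_w=\begin{cases} v_1^{p^{n-|w|+2}}g_{w'}+g_{w\cdot 0}, & w=w'\cdot(p-1),\\ g_{w\cdot 0}, & \text{otherwise.}\end{cases}
\]
I would argue this by induction on $n$, using the recursive description of $T_n$ as $p$ copies of $T_{n-1}$ glued along a single $v_1$-tower of length $p^{n+1}+\dots+p$. The base case $T_0=\ell_*/(p,v_1^p)$ already appears at the lowest torsion class. At the inductive step, the inclusions $\Sigma^{2kp^{n+1}}T_{n-1}\hookrightarrow T_n$ for $0\le k\le p-2$ and the projection $T_n\twoheadrightarrow\Sigma^{2(p-1)p^{n+1}}T_{n-1}$ are visible on the $E_\infty$-page of (B4), while the gluing relations $p\cdot g_\varnothing=g_0$ and $v_1^{p^{n+1}}g_\varnothing=g_{(p-1)0}-p\cdot g_{(p-1)}$ are hidden $v_0$- or $v_1$-extensions read off by comparing (B3) and (B4). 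The main obstacle is exactly this bookkeeping of hidden extensions; the dueling-Bockstein method is essential because neither spectral sequence alone determines the module structure.

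The vanishing claim follows by a direct degree count. Within the summand $\Sigma^{2kp^{n+2}+2(p-1)}T_n$, the generator $g_\varnothing$ lies at degree $2kp^{n+2}+2(p-1)=2kp^{n+2}+2p-2$, which is one more than the upper bound $2kp^{n+2}+2p-3$ of the window; every other generator $g_w$ of that summand has $|g_w|>0$, so the whole summand contributes strictly above the window. For any other summand indexed by $(n',k')$, the grading formula $|g_w|=2p^2(a_1p^{n'-1}+\dots+a_jp^{n'-j})$ places the generators on a lattice of spacing divisible by $2p^2$ around the shift $2k'p^{n'+2}+2(p-1)$, and a short case analysis on $(n',k')$ rules out any element landing in the window of width $4p-4$.
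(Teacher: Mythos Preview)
Your proposal correctly identifies the architecture of the argument---the torsion is assembled by running (B4), and the hidden extensions must be compatible with what (B3) sees---but it omits the mechanism that actually pins down the differentials and extensions. Saying that the gluing relations ``are hidden $v_0$- or $v_1$-extensions read off by comparing (B3) and (B4)'' is not a method; it is a restatement of the problem. The dueling-Bockstein heuristic narrows the possibilities but does not by itself force, for example, the specific differential $d_{p^n+\cdots+p}(p^{n-1}a_{kp^{n-1}}) \doteq (k-1)v_1^{p^n+\cdots+p}b_{(k-1)p^{n-1}}$ for all $k$, nor the extension $p\cdot b_m = v_0 b_m + v_1^{p^{k+2}}v_0^{k'-k-1}b_{m-(p-1)p^k}$ for general $m$.

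The paper's missing ingredient is the cap product with $\THC_S^*(\ell)$. One first lifts the divided-power classes $c_k$ (dual to $\mu^k$) from $\THC_S^*(\ell;H\F_p)$ to $\THC_S^*(\ell)$ via comparison with relative $\THC_{BP}^*(\ell)$, and computes $c_k \smallfrown a_n \doteq \binom{n-1}{k}a_{n-k}$ and similarly for $b_n$. Naturality of the cap product with respect to Bockstein differentials then propagates a single differential on $p^{n-1}a_{p^n}$ (forced by Lemma~\ref{lem:zerocyclic}(b)) to all $p^{n-1}a_{kp^{n-1}}$, with Kummer's theorem controlling the binomial coefficients. The same capping argument shows $T_{n,k}$ is independent of $k$ and transports the extensions established at $b_{p^{n+1}-p^k}$ (where Lemma~\ref{lem:zerocyclic}(a) forces cyclicity) to arbitrary $b_m$. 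Without this tool, your inductive step has no way to establish the relations for generators $g_w$ with $w$ not of a special form.

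Your vanishing argument is also circular as written: you deduce the gap from the decomposition into $T_n$'s, but in the paper the gap (Lemma~\ref{lem:homotopygaps}) is proved first from the $E_\infty$-page of (B4) and then used to justify the direct-sum decomposition.
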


To facilitate understanding of the modules $T_n$, we have included a picture of the torsion for $p=2$ starting in degrees $18$ and $34$ as Figure~\ref{fig:Torsion}. These correspond to $T_1$ and $T_2$.
\begin{center}
\begin{figure}[ht]
\includegraphics[width=.9\textwidth]{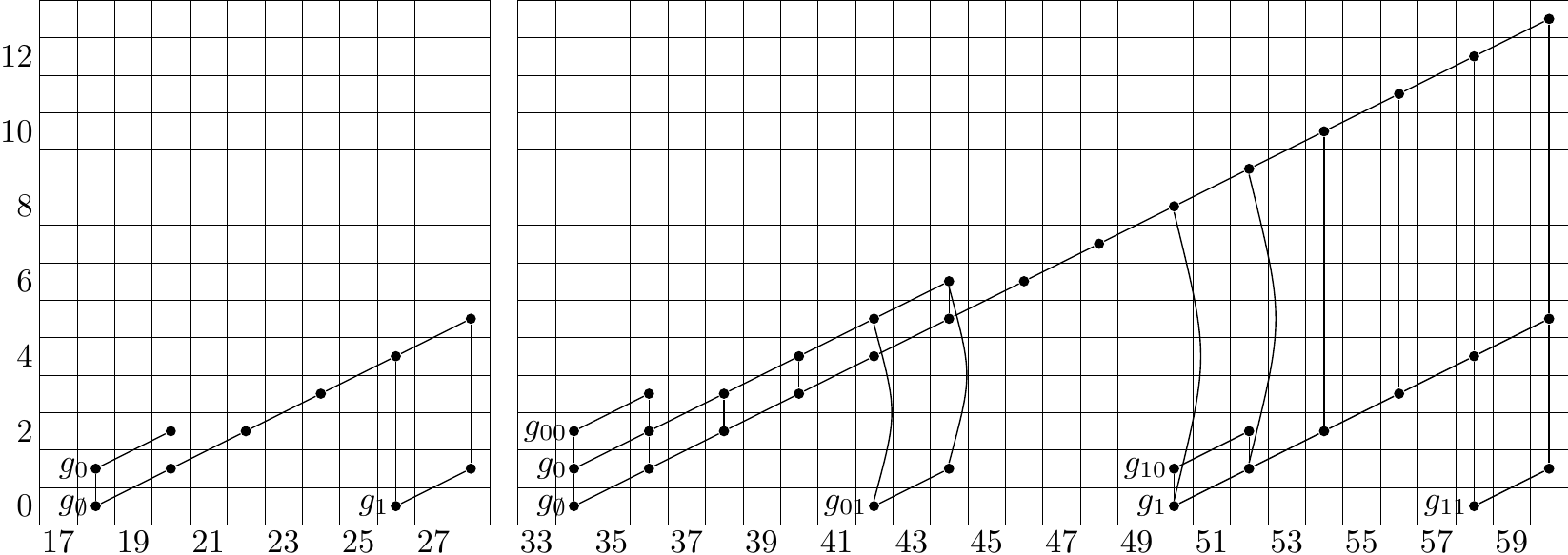}
\caption{The torsion in degrees $18$ through $60$ for $p=2$}
\label{fig:Torsion}
\end{figure}
\end{center}

\section{The first two Bockstein spectral sequences}\label{sec:BSS1and2}
In this section, we compute the base case Bockstein spectral sequences, Spectral Sequences (\ref{B1}) and (\ref{B2}). As was mentioned above,
\[
\THH_\ast(\ell;H\F_p)=E(\lambda_1,\lambda_2)\otimes\F_p[\mu],
\]
where $\lambda_1$, $\lambda_2$ and $\mu$ are in degrees $2p-1$, $2p^2-1$ and $2p^2$, respectively. Here $\lambda_1$ is represented by $\sigma \xi_1$, $\lambda_2$ is
represented by $\sigma \xi_2$ and $\mu$ is represented by $\sigma \tau_2$. The elements $\xi_i$ and $\tau_2$ arise from the dual Steenrod algebra via the change of rings, and the operator $\sigma$ represents multiplication by the fundamental class of $S^1$,
using circle action on $\THH(\ell)$. At $p=2$, the usual modifications involving the names of classes in the dual Steenrod algebra apply. We note, as in \cite[Prop 4.2]{McSt} or \cite[Thm 5.12]{AnRo}, that there is a mod $p$ Bockstein connecting $\mu$ and
$\lambda_2$. This is essential for starting the $H\Z$-Bockstein spectral sequence.

\subsection{The $k(1)$-Bockstein spectral sequence}
McClure and Staffeldt ran the first Bockstein spectral sequence at an odd prime, calculating the homotopy groups of $\THH(\ell;k(1))$ in \cite{McSt}, and in \cite{AnRo}, Rognes and the first author extended the calculation to $p=2$.

The calculation depends on the following result of McClure and Staffeldt:
\[
HH_\ast(K(1)_\ast\ell)\cong K(1)_\ast\ell.
\]
This implies that the $K(1)_*$-based B\"okstedt spectral sequence collapses: $K(1)_*\ell \cong K(1)_*\!\THH(\ell)$, and with the exception of the class $1$, all elements are $v_1$-torsion. There is only one pattern of differentials compatible with this, and it produces $v_1$-towers of various length on $\mu^i \lambda_1$, $\mu^i\lambda_1\lambda_2$ and $\mu^{pi} \lambda_2$. For the reader's convenience we recall the result here.

Recursively define $r(n)$ by $r(1)=p$, $r(2)=p^2$ and $r(n)=p^n+r(n-2)$ for $n \geq 3$. Also define $\lambda_n$ by $\lambda_n=\lambda_{n-2}\mu^{p^{n-3}(p-1)}$.

\begin{theorem}[\cite{AnRo, McSt}]\label{thm:k1SS}
The homotopy of $\THH(\ell;k(1))$ is generated as a module over $\F_p[v_1]$ by $1$, $x_{n,m}=\lambda_n \mu^{p^{n-1} m}$ and $x'_{n,m}=\lambda_n \lambda_{n+1} \mu^{p^{n-1} m}$ for $n \geq 1$ and $m \geq 0$, $m\not \equiv p-1\mod p$. The relations are generated by
\[
v_1^{r(n)} x_{n,m}=v_1^{r(n)} x'_{n,m}=0.
\]
\end{theorem}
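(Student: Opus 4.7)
The plan is to run the $v_1$-Bockstein spectral sequence (\ref{B1}), whose $E_1$-page is
\[
E(\lambda_1,\lambda_2)\otimes\F_p[\mu]\otimes\F_p[v_1],
\]
and determine the differentials using a mixture of Leibniz, internal-degree matching, and the constraint from the McClure--Staffeldt collapse $K(1)_\ast\THH(\ell)\cong K(1)_\ast\ell$. The latter, once we invert $v_1$, forces every generator other than $1$ to become $v_1$-torsion on $E_\infty$: every $v_1$-tower must be truncated, and as already noted in the excerpt the only differential pattern compatible with this constraint produces towers of the lengths $r(n)$.

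First I would identify the initial differential. The classes $\lambda_1$ and $\lambda_2$ lift from $\THH_\ast(\ell;k(1))$ and so are permanent cycles; $\mu$ does not lift, so it must support a differential. An internal-degree count shows that $d_r(\mu)$ must vanish for $r<p$ because the candidate targets lie in degrees where no class is available. At $r=p$ one finds
\[
d_p(\mu)\doteq v_1^p\lambda_1,
\]
since $|\mu|-1=2p^2-1=p(2p-2)+(2p-1)=|v_1^p\lambda_1|$. The Leibniz rule gives $d_p(\mu^k\lambda_2^\epsilon)\doteq k\mu^{k-1}v_1^p\lambda_1\lambda_2^\epsilon$ for $\epsilon\in\{0,1\}$, together with $d_p(\mu^k\lambda_1\lambda_2^\epsilon)=0$ because $\lambda_1^2=0$. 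On $E_{p+1}$, the $\lambda_1$- and $\lambda_1\lambda_2$-towers are truncated to length $r(1)=p$ at indices $m\not\equiv p-1\pmod p$, and the remaining $\mu$-powers have the form $\mu^{pi}$ and $\mu^{pi}\lambda_2^\epsilon$.

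I would then iterate. Assume inductively that at $E_{r(n-1)+1}$ the tower bases at levels $j<n$ are exactly $x_{j,m}, x'_{j,m}$ with $m\not\equiv p-1\pmod p$ and each truncated to length $r(j)$, and that the surviving $\mu$-powers have the form $\mu^{p^{n-1}m}$. Internal-degree matching together with the recursions $r(n)=p^n+r(n-2)$ and $\lambda_n=\lambda_{n-2}\mu^{p^{n-3}(p-1)}$ forces the next differential to be
\[
d_{r(n)}\bigl(\mu^{p^{n-1}(m+1)}\bigr)\doteq v_1^{r(n)}\lambda_n\mu^{p^{n-1}m},
\]
and Leibniz then propagates this through multiplication by $\lambda_{n+1}$, giving $v_1^{r(n)}x_{n,m}=v_1^{r(n)}x'_{n,m}=0$. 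The hypothesis $m\not\equiv p-1\pmod p$ is bookkeeping: when $m$ satisfies the congruence, $\lambda_n\mu^{p^{n-1}m}$ factors as $\lambda_{n+2}\mu^{p^{n}k}$ and is either re-indexed as part of a later family $x_{n+2,\ast}$ or killed by a subsequent differential.

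The main obstacle is ruling out any extra differentials or unlisted generators. Parity handles spurious low-order candidates, since each $d_r$ reverses total-degree parity and the available classes of each parity are sparse; the inductive internal-degree check handles higher-order possibilities. The global consistency check is that the candidate $E_\infty$-page assembled from $1$, the $x_{n,m}$ and the $x'_{n,m}$ with the prescribed truncations inverts correctly to $K(1)_\ast\ell$; since that inversion admits only one compatible pattern of differentials, this pins the answer down completely and gives the stated $\F_p[v_1]$-module presentation of $\THH_\ast(\ell;k(1))$.
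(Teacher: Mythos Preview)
Your proposal is correct and follows essentially the same route as the paper: the paper does not give a detailed proof but cites \cite{AnRo, McSt} and sketches exactly your argument, namely that the McClure--Staffeldt result $K(1)_\ast\THH(\ell)\cong K(1)_\ast\ell$ forces everything except $1$ to be $v_1$-torsion, and that there is only one pattern of differentials in the $v_1$-Bockstein spectral sequence compatible with this. Your write-up fills in the inductive structure of that differential pattern (the $d_{r(n)}$ on $\mu^{p^{n-1}}$ and its Leibniz consequences) more explicitly than the paper does, but the underlying idea is identical.
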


Figure~\ref{fig:V1} shows the homotopy of $\rTHH\big(ku;k(1)\big)$ through dimension $33$ at $p=2$.

\begin{center}
\begin{figure}[ht]
\includegraphics[width=.9\textwidth]{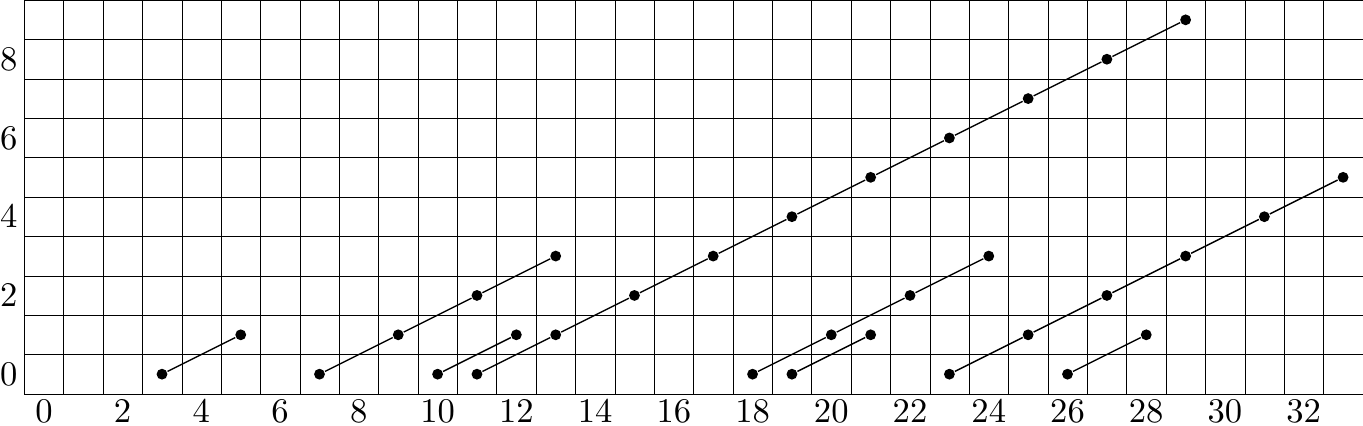}
\caption{The homotopy of $\rTHH\big(ku;k(1)\big)$} \label{fig:V1}
\end{figure}
\end{center}

\subsection{The $H\Z$-Bockstein spectral sequence}\label{sec:HZSS}
Next we run the Bockstein spectral sequence converging to
$\THH_\ast(\ell; H\Z_{(p)})^\wedge_p$. There is an immediate differential $d_1(\mu)=v_0\lambda_2$, since as was described above, the corresponding classes in the homology of $\THH(\ell)$ are connected by a Bockstein.

To get the remaining differentials, we use the following ``Leibniz'' rule for higher differentials in a Bockstein spectral sequence.

\begin{lemma}
We have differentials
\[
d_{i+1}(\mu^{p^i})=v_0^{i+1} \mu^{p^i-1} \lambda_2.
\]
\end{lemma}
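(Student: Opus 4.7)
The plan is to proceed by induction on $i$, with the base case $i=0$ being the given differential $d_1(\mu)=v_0\lambda_2$ coming from the mod $p$ Bockstein between $\mu$ and $\lambda_2$. For the inductive step, assume $d_i(\mu^{p^{i-1}})=v_0^i\mu^{p^{i-1}-1}\lambda_2$; we want to deduce the analogous formula for $d_{i+1}(\mu^{p^i})$.

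First I would verify that $\mu^{p^i}$ survives to the $E_{i+1}$-page. Writing $\mu^{p^i}=(\mu^{p^{i-1}})^p$ and applying the Leibniz rule on the $E_i$-page gives
\[
d_i(\mu^{p^i}) = p\,\mu^{(p-1)p^{i-1}}\cdot v_0^i\mu^{p^{i-1}-1}\lambda_2 = p\,v_0^i\mu^{p^i-1}\lambda_2 = 0,
\]
since the $E_i$-page is an $\F_p$-module. On the earlier pages the same argument applies, using that the smallest surviving power of $\mu$ on $E_r$ ($r\leq i$) is $\mu^{p^{r-1}}$, which divides $\mu^{p^i}$ as a $p$-th power, so its Leibniz differential acquires a factor of $p$.

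To compute $d_{i+1}(\mu^{p^i})$ I would invoke the extra structure coming from the fact, noted in the introduction, that $\THH(\ell;H\F_p)$ is the realization of a simplicial commutative $H\F_p$-algebra. This endows the Bockstein spectral sequence with divided power structure, and gives a Kudo-type transgression formula: if $x$ is an even-degree class with $d_r(x)=v_0^r y$ and $y$ is an odd-degree permanent cycle with $y^2=0$, then on $E_{r+1}$,
\[
d_{r+1}(x^p)\doteq v_0^{r+1}x^{p-1}y.
\]
Concretely this amounts to observing that the naive $p$-th power $(\widetilde{x})^p$ of a lift $\widetilde{x}\in\pi_*\THH(\ell;H\Z/p^{r+1})$ has order only $p^{r+1}$ (since $p^{r+1}(\widetilde{x})^p=(p^{r+1}\widetilde{x})\widetilde{x}^{p-1}=0$), one order short of what we need; the divided-power lift $\gamma_p(\widetilde{x})$ provided by the simplicial commutative structure supplies the needed correction term so that the resulting class has order $p^{r+2}$ with $p^{r+1}$-multiple equal to $\widetilde{x^{p-1}y}$.

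Applying the formula with $r=i$, $x=\mu^{p^{i-1}}$, $y=\mu^{p^{i-1}-1}\lambda_2$ then yields
\[
d_{i+1}(\mu^{p^i})\doteq v_0^{i+1}\mu^{(p-1)p^{i-1}}\cdot\mu^{p^{i-1}-1}\lambda_2 = v_0^{i+1}\mu^{p^i-1}\lambda_2,
\]
completing the induction. The main obstacle is establishing the Kudo-type formula in this context: the naive guess $\widetilde{\mu^{p^i}}=(\widetilde{\mu^{p^{i-1}}})^p$ falls short by one $p$-adic order, and one must exploit the divided power structure coming from the simplicial commutative ring structure on $\THH(\ell;H\F_p)$ to produce the correct lift. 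Once this formula is in hand, the induction is immediate.
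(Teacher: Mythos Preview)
Your inductive strategy matches the paper's, but the justification you give for the key $p$th-power formula has a gap at $p=2$. The paper does not appeal to divided powers; it quotes May's result \cite[Prop.~6.8]{Ma70} for $E_\infty$ ring spectra, which says that if $d_j(x)$ is defined then $d_{j+1}(x^p)=v_0\,x^{p-1}d_j(x)$, \emph{except} when $p=2$ and $j=1$, in which case there is an additional error term $\mathcal P_4(d_1 x)$. Your hypothesis $y^2=0$ does not kill this term: at $p=2$ the square of a degree-$7$ class is $Q^7$ of it, whereas the error term here is $Q^8(\lambda_2)$, a different Dyer--Lashof operation. The paper disposes of it by the explicit computation
\[
Q^8(\lambda_2)=Q^8(\sigma\xi_2^2)=\sigma Q^8(\xi_2^2)=\sigma(Q^4\xi_2)^2=\sigma\xi_3^2=0,
\]
using that $\sigma$ commutes with Dyer--Lashof operations and annihilates squares. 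Without this (or an equivalent) computation the $i=1$ step at $p=2$ is not established.

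Your proposed source for the Kudo-type formula is also shaky: the phrase ``simplicial commutative $H\F_p$-algebra'' in the introduction refers to a simplicial object in commutative $H\F_p$-\emph{algebra spectra}, not to a classical simplicial commutative $\F_p$-algebra. The latter would force $\mu^p=0$ in $\pi_*$ by the usual divided-power argument, contradicting what we know. So the divided-power lift $\gamma_p(\tilde x)$ you invoke is not available in the form you want, and the correct replacement---the $E_\infty$ power operations---is exactly what produces the error term you must then compute. Once you cite May's formula and verify $Q^8(\lambda_2)=0$, the induction goes through as you outline.
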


\begin{proof}
We use a result of May relating the higher Bocksteins and $p^{\text{th}}$ powers \cite[Proposition 6.8]{Ma70} in an $E_\infty$ context. If $x$ supports a $d_j$ differential in the Bockstein spectral sequence, then
\[
d_{j+1}(x^p)=v_0 x^{p-1} d_j(x)
\]
if $p>2$ or if $p=2$, $j \geq 2$. If $p=2$ and $j=1$, then there is an error term of $\mathcal P_4(d_1(x))$.

In our case, it is easy to see that the error term vanishes. The error term is $\mathcal P_{4}(\lambda_2)$. This power operation is simply $Q^8$, and since $Q^8$ commutes with the $\sigma$ action, we see that
\[
Q^8(\lambda_2)=Q^8(\sigma\xi_2^2)=\sigma Q^8(\xi_2^2)=\sigma (Q^4\xi_2)^2=\sigma\xi_3^2=0.\qedhere
\]
\end{proof}

\begin{remark}
Just as we used the $K(1)$-based B\"okstedt spectral sequence to get $v_1$-torsion information, we can use the $K(0)=H\Q$-based B\"okstedt spectral sequence to get $p$-torsion information. The $H\Q$-based B\"okstedt spectral sequence gives
\[
\THH_\ast(\ell;H\Q) \cong \Q[\lambda_1]/\lambda_1^2,
\]
so we know that all the rest is torsion. This method alone tells us where all the differentials are, though not how long they are in this case.
\end{remark}

Let $a_i=\mu^{i-1} \lambda_2$ and $b_i=\mu^{i-1} \lambda_1 \lambda_2$, for $i\geq 1$. Then $|a_i|=2p^2i-1$ and $|b_i|=2p^2i+2(p-1)$, and they both have order $p^{k+1}$, where $k=\nu_p(i)$, the $p$-adic valuation of $i$. The above analysis shows the following proposition.

\begin{prop}
The homotopy of $\rTHH(\ell;H\Z_{(p)})$ is a copy of $\Z_{(p)}$ generated by $\lambda_1$ plus torsion. The torsion is generated as a $\Z_{(p)}$-module by the elements $a_i$ and $b_i$.
\end{prop}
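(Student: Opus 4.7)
The plan is to run Bockstein spectral sequence \eqref{B2}, starting from $E_1 = \THH_*(\ell;H\F_p)[v_0]$, deduce every differential from $d_1(\mu)=v_0\lambda_2$ and the preceding Lemma by means of the Leibniz rule, and then read off $\rTHH_*(\ell;H\Z_{(p)})$ from the $E_\infty$-page after discarding the $1$-summand.

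First I would identify which generators are permanent cycles. The monomial basis of $E_1$ splits by parity: even-degree monomials carry an even number of $\lambda$-factors (either $1$ or $\lambda_1\lambda_2$) and odd-degree ones carry exactly one. A differential on $\lambda_1$ would have to land in an even-degree monomial of total degree $2p-2$ carrying a factor $v_0^r$, and no such monomial exists in $E_1$; the same degree check shows that $\lambda_2$ also supports no differential. Of the three generators only $\mu$ supports a differential, and the preceding Lemma gives $d_{k+1}(\mu^{p^k}) \doteq v_0^{k+1}\mu^{p^k-1}\lambda_2$ for all $k\geq 0$ (the $k=0$ case being the known $d_1$).

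Next I would extend these to all monomials via the Leibniz rule. Writing $i = p^k m$ with $p\nmid m$ and using $\mu^i = (\mu^{p^k})^m$, I obtain
\[
d_{k+1}(\mu^i) \doteq v_0^{k+1}\mu^{i-1}\lambda_2, \qquad d_{k+1}(\mu^i\lambda_1) \doteq v_0^{k+1}\mu^{i-1}\lambda_1\lambda_2.
\]
For each $i\geq 1$ these differentials annihilate $\mu^i$ and $\mu^i\lambda_1$ and at the same time truncate the $v_0$-tower on $a_i = \mu^{i-1}\lambda_2$ and on $b_i = \mu^{i-1}\lambda_1\lambda_2$ above height $\nu_p(i)$. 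Reading off $E_\infty$: we obtain full $\F_p[v_0]$-summands on $1$ and $\lambda_1$, giving $\Z_{(p)}$'s in the abutment, and length-$(\nu_p(i)+1)$ truncated $v_0$-towers on each $a_i$ and $b_i$, giving cyclic $p$-torsion summands of order $p^{\nu_p(i)+1}$. Discarding the $1$-summand produces the stated splitting for $\rTHH$.

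It remains to confirm that these differentials exhaust all possibilities and that no hidden $v_0$-extensions occur. Every monomial in $E_1$ sits in a bidegree which, by the parity and degree bookkeeping above, is already either the source or target of one of the listed differentials, so no surprise higher Bockstein can appear. The $H\Q$-based B\"okstedt computation of the preceding Remark, $\THH_*(\ell;H\Q) \cong \Q[\lambda_1]/\lambda_1^2$, furnishes a rational cross-check: everything outside the $\lambda_1$-tower must be torsion, consistent with the pattern above. The $v_0$-filtration on each truncated tower is already maximal, ruling out hidden $v_0$- (that is, $p$-) extensions, and finite generation of $\THH(\ell)$ lets us dispense with the $p$-completion. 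The main technical obstacle is precisely this bookkeeping step — verifying that no surprise Bockstein is hiding in a bidegree we have not inspected — and the rational cross-check is the decisive tool for closing it off.
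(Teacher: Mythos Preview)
Your proposal is correct and follows essentially the same approach as the paper: the paper's proof is simply the sentence ``The above analysis shows the following proposition,'' where ``the above analysis'' is precisely the Bockstein computation you have spelled out (the Lemma's differentials on $\mu^{p^k}$, propagated by the Leibniz rule, together with the observation that a $v_0$-Bockstein spectral sequence admits no hidden $p$-extensions). Your write-up merely makes explicit the degree checks and the bookkeeping that the paper leaves to the reader.
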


Since this is a Bockstein spectral sequence for replacing $p$, we know that there are no possible additive extensions. The lifts of $a_i$ and $b_i$ to $\rTHH_\ast(\ell; H\Z_{(p)})$ are defined up to a $p$-adic unit.

\section{The third Bockstein spectral sequence}\label{sec:SSB3}
In this section, we say as much as we can about the spectral sequence
\[
\rTHH_\ast\!\big(\ell;k(1)\big)[v_0]\Rightarrow\rTHH_\ast(\ell)_p^{\wedge}.
\]
The $E_1$-page through dimension $33$ for $p=2$ is depicted in Figure~\ref{fig:BSS1}. Note that $v_1$ is really in filtration $0$ here; we draw it in filtration $1$ to reduce the clutter.

\begin{center}
\begin{figure}[ht]
\includegraphics[width=.9\textwidth]{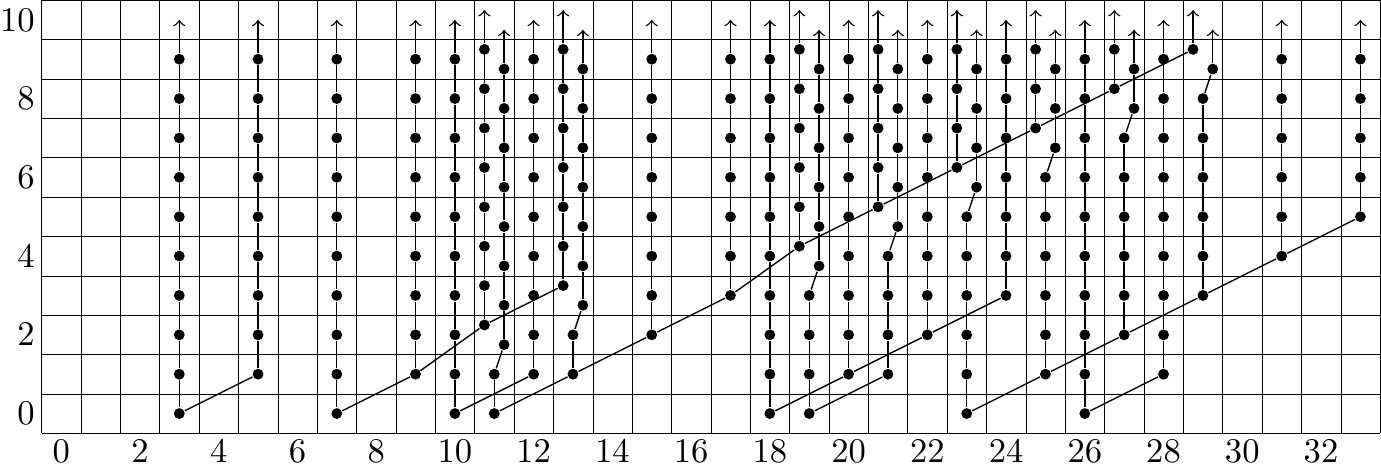}
\caption{The $v_0$-Bockstein spectral sequence converging to $\rTHH_\ast(ku)$} \label{fig:BSS1}
\end{figure}
\end{center}

This spectral sequence allows us to conclude key facts about some of the homotopy groups, making upper bounds on the order of the groups or determining if they are cyclic. The following lemmata serve as the workhorses for all computations in Spectral Sequence (\ref{B4}).

\begin{lemma}\label{lem:zerocyclic}
\mbox{}
\begin{enumerate}
\item[(a)] The groups $\rTHH_{2p^{n+2}-2p}(\ell)$ for $n \geq 1$ are cyclic.
\item[(b)] The groups $\rTHH_{2p^{n+2}-2}(\ell)$ and $\rTHH_{2p^{n+2}}(\ell)$ for $n \geq 0$ are 0.
\item[(c)] The groups $\rTHH_{2p^{n+2}+2p-2}(\ell)$ for $n \geq 0$ are cyclic.
\end{enumerate}
\end{lemma}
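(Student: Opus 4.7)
The plan is to bound $\pi_N\rTHH(\ell)^\wedge_p$ in each target degree $N$ using only the $E_1$-page of Spectral Sequence~(\ref{B3}). Since the Bockstein variable $v_0$ has bidegree $(0,1)$, the column $E_1^{N,\ast}$ is $A_N[v_0]$ with $A := \rTHH_\ast(\ell;k(1))$, so for the finitely generated $\Z_p$-module $M := \pi_N\rTHH(\ell)^\wedge_p$ we have $\dim_{\F_p}(M/pM) = \dim_{\F_p} E_\infty^{N,0} \leq \dim_{\F_p} A_N$. Thus parts (a) and (c) reduce to showing $\dim_{\F_p} A_N \leq 1$ in the relevant degrees, and part (b) reduces to $A_N = 0$. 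By finite generation of $\pi_N\rTHH(\ell)$ as a $\Z_{(p)}$-module, cyclicity of $M$ implies cyclicity of $\pi_N\rTHH(\ell)_{(p)}$.

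Using Theorem~\ref{thm:k1SS}, an $\F_p$-basis of $A$ consists of the monomials $v_1^j x_{n,m}$ and $v_1^j x'_{n,m}$. The $x_{n,m}$ classes sit in odd total degrees, so in the even target degrees only the $v_1^j x'_{n,m}$ contribute. A short induction on the recursion $\lambda_n = \lambda_{n-2}\mu^{p^{n-3}(p-1)}$ yields $|\lambda_n| + |\lambda_{n+1}| = 2p^{n+1} + 2p - 2$, and hence
\[
|v_1^j x'_{n,m}| = 2p^{n+1}(m+1) + 2(j+1)(p-1),
\]
subject to $n \geq 1$, $m \geq 0$, $m \not\equiv p-1 \pmod p$, and $0 \leq j < r(n)$.

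Writing $N = 2p^{n_0+2} + \varepsilon$ with $\varepsilon \in \{-2p,-2,0,2p-2\}$, the case $n \geq n_0+2$ is excluded by $|x'_{n,0}| > N$, and for $n \leq n_0+1$ the defining equation reduces modulo $p^{n+1}$ to
\[
(j+1)(p-1) \equiv \tfrac{\varepsilon}{2} \pmod{p^{n+1}}.
\]
Since $p-1$ is a unit modulo $p^{n+1}$ and $r(n) < p^{n+1}$, each $n$ admits at most one valid $j$, determined by the residue. Using the identity $(p-1)^{-1} \equiv -(1 + p + \cdots + p^n) \pmod{p^{n+1}}$, the four cases produce the residues $j+1 \equiv p + p^2 + \cdots + p^n$, $1 + p + \cdots + p^n$, $0$, and $1$ respectively.

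Comparing with $r(n) = p^n + p^{n-2} + \cdots$, a direct computation shows $p + p^2 + \cdots + p^n > r(n)$ for $n \geq 2$ and $1 + p + \cdots + p^n > r(n)$ for all $n \geq 1$. Consequently, in cases $\varepsilon = -2$ and $\varepsilon = 0$ no valid $j$ exists for any $n$, giving $A_N = 0$ (part (b)); in case $\varepsilon = -2p$ only $n = 1$ with $j = p-1$ survives, yielding the single class $v_1^{p-1}x'_{1, p^{n_0}-2}$, admissible precisely when $n_0 \geq 1$ (part (a)); in case $\varepsilon = 2p-2$ only $n = n_0+1$ with $j = 0$ and $m = 0$ survives, yielding $x'_{n_0+1,0}$ (part (c)). The main obstacle is the sharp numerical inequality $p + p^2 + \cdots + p^n > r(n)$ for $n \geq 2$: since $r(n)$ is the alternating-parity partial geometric sum, the gap is given precisely by the ``missing'' terms, so the inequality is positive but tight, and it is exactly this tightness that makes the Bockstein argument produce the clean vanishing and cyclicity results above.
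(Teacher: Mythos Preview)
Your argument is essentially the paper's: both bound the $E_1$-column of Spectral Sequence~(\ref{B3}) in the target degree by counting monomials $v_1^j x'_{n,m}$, and conclude cyclicity or vanishing from $\dim_{\F_p} A_N \leq 1$ or $A_N=0$. Your reduction modulo $p^{n+1}$ to pin down $j$ is a tidy way to organize what the paper does by direct $p$-adic expansion.

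There is one genuine gap in part~(c). For $\varepsilon = 2p-2$ your congruence gives $j+1 \equiv 1 \pmod{p^{n+1}}$, so $j=0$ is admissible for \emph{every} $n \le n_0+1$; the inequality $j+1 \le r(n)$ does no work here. What actually rules out $n \le n_0$ is the side condition $m \not\equiv p-1 \pmod p$: solving the degree equation with $j=0$ forces $m+1 = p^{\,n_0+1-n}$, hence $m \equiv -1 \pmod p$ whenever $n \le n_0$, which is excluded. You recorded this constraint when setting up the count but never invoked it in the final paragraph, so as written the claim that only $x'_{n_0+1,0}$ survives is unjustified. The paper handles~(c) differently: it first uses part~(b) in degree $2p^{n_0+2}$ to exclude any $v_1$-divisible class in degree $2p^{n_0+2}+2p-2$, and then notes that the condition $m\not\equiv p-1$ forces at most one $x'_{n,m}$ in any fixed degree.
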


\begin{proof}
We show this by showing that in this Bockstein spectral sequence, the $E_1$ term is $\F_p[v_0]$ in degree $2p^{n+2}-2p$ and $2p^{n+2}+2p-2$ and is $0$ in degrees $2p^{n+2}-2$ and $2p^{n+2}$.

The only even generators from Theorem~\ref{thm:k1SS} are the generators $x_{n,m}'$. A simple counting argument shows that
\[
|x_{n,m}'|=2(p^{n+1}-1)+2p+2mp^{n+1}=2(p-1)+2(m+1)p^{n+1}.
\]
The element $x_{n,m}'$ supports a $v_1$-tower truncated at height $r(n)$, where $r(n)$ was recursively defined for Theorem~\ref{thm:k1SS} as well. We can now check by degree, arguing by $p$-adic expansion.

The proof of the results in the lemma are very similar. We must first find all of the elements of the form $x_{j,m}'v_1^k$ in dimension $2p^{n+2}-2p$. In other words, we must find all triples $(j,m,k)$ such that
\[
2p^{n+2}-2p=2(m+1)p^{j+1}+(1+k)2(p-1),
\]
subject to the condition that $k<r(j)$. The integer $2(p^{n+2}-p)$ is divisible by $2(p-1)$, so we see that $m+1=(p-1)\hat{m}$ for some integer $\hat{m}$. Dividing through by $2(p-1)$ leaves
\[
\hat{m}p^{j+1}+1+k=p^{n+1}+\dots+p.
\]
In particular, we must have that
\[
k\geq p^{j}+\dots+p-1.
\]
If $j>1$, then this quantity is bigger than $r(j)$ by induction. If $j=1$, then there is a solution with $k<r(j)$, namely $k=p-1$, $\hat{m}=1+\dots+p^{n-1}$. This means that modulo $p$, there is one generator in degree $2p^{n+2}-2p$, namely $x_{1,p^n-2}'v_1^{p-1}$.

For degree $2p^{n+2}-2$, we note that this degree is the degree just argued plus $|v_1|$. In that degree, the generating class was the largest allowed $v_1$ multiple of the class $x_{1,p^n-2}'$, so there can be no classes in $2p^{n+2}-2$.

For degree $2p^{n+2}$, we search for classes $x_{j,m}'v_1^k$ such that
\[
|x_{j,m}'v_1^k|=2(m+1)p^{j+1}+2(p-1)+2(p-1)k=2p^{n+2}.
\]
Combining terms and reducing modulo $p^{j+1}$, we see that $k$ is at least $p^{j+1}-1$, which is in particular larger than $r(j)$.

The proof of part (c) is similar. Part (b) shows that there are no $v_1$-divisible classes in this degree. By construction, there is at most one $x_{j,m}'$ in each degree, and in this degree, we have the classes $x_{n+1,0}'$.
\end{proof}

\begin{lemma}\label{lem:ExtensionLemma}
The groups $\rTHH_{2p^{n+2}-1}(\ell)$ for $n \geq 0$ are $\Z_{(p)}$.
\end{lemma}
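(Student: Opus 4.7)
The plan is to apply the third Bockstein spectral sequence~\eqref{B3}, $E_1=\rTHH_*(\ell;k(1))[v_0]\Rightarrow\rTHH_*(\ell)_p^\wedge$, and to show that in total degree $2p^{n+2}-1$ its $E_\infty$-page consists of a single infinite $v_0$-tower.

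The first step is a careful enumeration of $E_1$-generators in total degree $2p^{n+2}-1$. By Theorem~\ref{thm:k1SS} these take the form $x_{j,m}v_1^k=\lambda_j\mu^{p^{j-1}m}v_1^k$ with $m\not\equiv p-1\pmod{p}$ and $0\leq k<r(j)$. The recursions for $\lambda_j$ and $r(j)$ yield by induction the identity
\[
|\lambda_j|+2r(j-2)(p-1)=2p^j-1,
\]
where we extend $r$ by $r(0)=r(-1)=0$. Substituting into the degree condition gives
\[
k(p-1)=(p^{n+2}-p^j)+r(j-2)(p-1)-p^{j+1}m.
\]
For $j=n+2$ only $(m,k)=(0,r(n))$ satisfies $k\geq 0$; for $j>n+2$ the degree $|\lambda_j|$ already exceeds $2p^{n+2}-1$; and for $j\leq n+1$, the constraints $k\geq 0$ and $k<r(j)$ force simultaneously $m\geq p^{n+1-j}$ and $m<p^{n+1-j}$, an impossibility. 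Hence the unique generator in this degree is $g:=\lambda_{n+2}v_1^{r(n)}$, which for $n=0$ is simply $\lambda_2$.

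Next, the proof of Lemma~\ref{lem:zerocyclic}(b) shows that the $E_1$-page has no generators in degree $2p^{n+2}$, so no differential lands in degree $2p^{n+2}-1$. The $E_\infty$-page in this degree is therefore a single $\F_p[v_0]$-subquotient of the $v_0$-tower on $g$, namely either an infinite tower or a finite truncation $\F_p[v_0]/v_0^s$. Finally, the rational computation $\rTHH_*(\ell)\otimes\Q\cong\Q[\lambda_1,v_1]/(\lambda_1^2)$ gives torsion-free rank one in degree $2p^{n+2}-1$; since this rank equals the number of infinite $v_0$-towers on $E_\infty$, the tower on $g$ must be infinite, and $\rTHH_{2p^{n+2}-1}(\ell)_p^\wedge\cong\Z_p$. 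Finite generation of $\rTHH_*(\ell)$ as an $\ell_*$-module then descends this to $\rTHH_{2p^{n+2}-1}(\ell)\cong\Z_{(p)}$. The main obstacle is the uniqueness enumeration of the first step, which hinges on the identity above and the delicate bounding of $m$ in terms of $p^{n+1-j}$.
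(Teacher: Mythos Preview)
Your proof is correct and follows essentially the same strategy as the paper: enumerate the odd-degree generators $x_{j,m}v_1^k$ in degree $2p^{n+2}-1$ via the third Bockstein spectral sequence, find there is exactly one (namely $x_{n+2,0}v_1^{r(n)}$), and invoke the rational computation to conclude the resulting group is $\Z_{(p)}$. Your packaging of the enumeration via the single identity $|\lambda_j|+2r(j-2)(p-1)=2p^j-1$ is slightly cleaner than the paper's version, which splits into cases according to the parity of $j$ and reduces modulo $2(p-1)$; but the underlying computation and the use of the rational rank to rule out truncation are the same.
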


\begin{proof}
The proof of this lemma is very similar. Here we consider the odd classes $x_{j,m}$, and the argument depends on the parity of $j$. A combinatorial check shows that
\[
|x_{j,m}|=\begin{cases}
             2(p-1)\big(p^{j-1}+p^{j-3}+\dots+p\big)+2p-1+2mp^{n+1} &
             j\text{ even} \\
         2(p-1)\big(p^{j-1}+p^{j-3}+\dots+1\big)+1+2mp^{n+1} &
         j\text{ odd}.
            \end{cases}
\]
We must find those values of $j$, $m$, and $k$ such that
\[
|x_{j,m}v_1^k|=2p^{n+2}-1.
\]
Regardless of the parity of $j$, if we subtract $1$ from both sides, then the right hand side is $0$ modulo $2(p-1)$. This implies that again $m=\hat{m}(p-1)$. At this point, the argument does not depend on the parity in an essential way, so we spell out only the case of $j=2i$. Dividing by $2(p-1)$ leaves
\[
\hat{m}p^{2i+1}+p^{2i-1}+\dots+p^3+p+1+k=p^{n+1}+\dots+1.
\]
If $n+1>2i-1$, then $k\geq p^{2i}+\dots+p^2=r(2i)$. On the other hand, if $n+1=2i-1$, then we can choose $k=r(2i-2)$, $\hat{m}=0$. Thus we have for each $n$ a single generator in degree $2p^{n+2}-1$, namely $x_{n+2,0}v_1^{r(n)}$.
This class must generate an infinite cyclic group by the computation of $THH_*(\ell) \otimes \Q$.
\end{proof}

\begin{lemma}\label{lem:bpiCycles}
The groups $\rTHH_{2p^{n+2}+2p-3}(\ell)$ for $n \geq 0$ are $\Z_{(p)}$.
\end{lemma}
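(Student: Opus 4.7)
The plan is to mimic the structure of Lemma~\ref{lem:ExtensionLemma}: first use a $p$-adic counting argument in the $v_0$-Bockstein spectral sequence to show that $\rTHH_{2p^{n+2}+2p-3}(\ell)^\wedge_p$ is cyclic as a $\Z_{(p)}$-module, and then invoke the rational computation to promote ``cyclic'' to ``$\Z_{(p)}$.''

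For the counting step, I would first observe that the target degree $2p^{n+2}+2p-3$ is odd, so only the odd generators $x_{j,m}v_1^k$ of the $E_1$-page contribute (the classes $x'_{j,m}$ all lie in even degrees). Using the degree formula recorded in the proof of Lemma~\ref{lem:ExtensionLemma} and separating the cases $j$ even and $j$ odd, the equation $|x_{j,m}v_1^k|=2p^{n+2}+2p-3$ becomes, after reducing modulo $2(p-1)$, a forced condition $m=(p-1)\hat{m}$. Dividing by $2(p-1)$ produces an equation
\[
\hat{m}\,p^{j+1}+k \;=\; C(n,j),
\]
where $C(n,j)$ is an explicit sum of powers of $p$ coming from $(p^{n+2}-1)/(p-1)$ minus the appropriate alternating tail of powers. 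I would then analyze this equation via the $p$-adic expansion of $C(n,j)$, combined with the truncation bound $k<r(j)$ and the exclusion $m\not\equiv p-1\pmod p$ (the latter ruling out $\hat{m}=1$, which would double-count the generator $\lambda_{j+2}$ via the identity $\lambda_j\mu^{p^{j-1}(p-1)}=\lambda_{j+2}$). The conclusion of this analysis should be that the only surviving triple is $j=n+2$, $\hat m=0$, with $k=k_0$ equal to $p^n+p^{n-2}+\dots+p^2+1$ or $p^n+p^{n-2}+\dots+p+1$ according to the parity of $n$, in each case safely below $r(n+2)=p^{n+2}+p^n+\dots$. For $j>n+2$ the base term $|\lambda_j|$ already exceeds the target; for $j\le n$ one checks that, once $\hat m\ge 1$ is needed, the required $k$ either becomes negative or exceeds $r(j)$.

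With a single monomial on the $E_1$-page modulo $v_0$, the $v_0$-Bockstein forces $\rTHH_{2p^{n+2}+2p-3}(\ell)^\wedge_p$ to be a cyclic $\Z_{(p)}$-module. To pin down the isomorphism type I would then invoke the rational calculation $\rTHH_*(\ell)\otimes\Q\cong\Q[\lambda_1,v_1]/(\lambda_1^2)$: the class $v_1^{(p^{n+2}-1)/(p-1)}\lambda_1$ is nonzero rationally in degree $2p^{n+2}+2p-3$, so the group has positive rational rank. A cyclic $\Z_{(p)}$-module of positive rank is $\Z_{(p)}$, which finishes the proof; since $\THH(\ell)$ is finitely generated, the $p$-completion statement promotes to $p$-localization.

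The main obstacle is the $p$-adic bookkeeping in the counting step, specifically ruling out contributions from $j<n+2$ with $\hat m\ge 1$ without letting any solution slip through. This is, however, essentially the same argument as in Lemma~\ref{lem:ExtensionLemma}, shifted by $|v_1|=2p-2$, so the same inductive use of $r(n)=p^n+r(n-2)$ will succeed.
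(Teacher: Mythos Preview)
Your proposal is correct and follows essentially the same route as the paper: reduce to a single odd generator on the $E_1$-page of the $v_0$-Bockstein spectral sequence via the $p$-adic count from Lemma~\ref{lem:ExtensionLemma}, then invoke the rational computation to upgrade ``cyclic'' to $\Z_{(p)}$. The paper's version is simply terser, noting that the target degree is that of Lemma~\ref{lem:ExtensionLemma} plus $|v_1|$ and hence the unique generator is $x_{n+2,0}\,v_1^{r(n)+1}$ (which agrees with your $k_0=r(n)+1$).
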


\begin{proof}
The proof is the same as for the previous case. The dimension in question is that of the previous lemma plus $|v_1|$. Here we must choose $k=r(n)+1$, and the generating class is $x_{n+2,0}v_1^{r(n)+1}$.
\end{proof}

\begin{cor}\label{cor:xn0primesurvive}
For all $i$, the classes $x_{i+1,0}'$ survive the Bockstein spectral sequence, giving non-zero permanent cycles in $\rTHH_\ast(\ell)$.
\end{cor}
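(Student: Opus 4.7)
The plan is to show that $x'_{i+1,0}$, which sits in total degree $2p^{i+2}+2p-2$ at $v_0$-filtration $0$ of the $E_1$-page of Spectral Sequence~(\ref{B3}), is a permanent cycle. There are only two ways it could fail to survive---being hit by an incoming differential or supporting an outgoing one---and I would rule each out in turn.

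First, every differential $d_r$ in the $v_0$-Bockstein spectral sequence strictly raises the $v_0$-filtration, so a class already at filtration $0$ admits no possible preimage; hence $x'_{i+1,0}$ cannot be a target of any differential.

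Second, any outgoing differential $d_r(x'_{i+1,0})$ would land at filtration $r \geq 1$ in total degree $2p^{i+2}+2p-3$. Every nonzero class at filtration $r$ of $E_r$ has the form $v_0^{r} w$ for some generator $w$ at filtration $0$ that has survived to $E_r$, and such a differential amounts to truncating the $v_0$-tower on $w$ at height $r$. In the abutment this manifests as a nonzero $p^{r}$-torsion element in $\rTHH_{2p^{i+2}+2p-3}(\ell)$. But Lemma~\ref{lem:bpiCycles} identifies that group with the torsion-free module $\Z_{(p)}$, so no such differential can exist.

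Combining the two observations, $x'_{i+1,0}$ is a non-zero permanent cycle and, living at filtration $0$, detects a nontrivial class in $\rTHH_{2p^{i+2}+2p-2}(\ell)$ (consistent with the cyclicity supplied by Lemma~\ref{lem:zerocyclic}(c)). The main ``obstacle'' here is purely expository: one must carefully translate the vanishing of torsion in $\rTHH_{2p^{i+2}+2p-3}(\ell)$ into a statement about the impossibility of Bockstein differentials emanating from a filtration-$0$ generator. Once that translation is in place, the corollary is immediate from Lemmata~\ref{lem:zerocyclic}(c) and~\ref{lem:bpiCycles}.
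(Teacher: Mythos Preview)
Your argument is correct and matches the paper's approach: both rule out incoming differentials by noting that $x'_{i+1,0}$ sits in $v_0$-filtration $0$, and both rule out outgoing differentials by invoking Lemma~\ref{lem:bpiCycles}, since any nontrivial $d_r$ would force $p$-torsion in $\rTHH_{2p^{i+2}+2p-3}(\ell)\cong\Z_{(p)}$. The paper compresses this into two sentences (and also cites Lemma~\ref{lem:zerocyclic}(c) to identify $x'_{i+1,0}$ as the generator), but the logical content is the same.
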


\begin{proof}
Part (c) of Lemma~\ref{lem:zerocyclic} shows that $\rTHH_{2p^{n+2}+2p-2}(\ell)$ is a cyclic group generated by $x_{i+1,0}'$, and Lemma~\ref{lem:bpiCycles} shows that $\rTHH_{2p^{n+2}+2p-3}(\ell) \cong \Z_{(p)}$. Hence $x_{i+1,0}'$ cannot support a differential.
\end{proof}

\section{Topological Hochschild cohomology}\label{sec:THCku}
To finish the computations, we will use the ``cap product'' pairing of topological Hochschild cohomology with topological Hochschild homology:
\[
\THC^n_S(\ell)\otimes\THH_m(\ell)\to \THH_{m-n}(\ell).
\]
This arises quite naturally. The spectrum $THH_S(\ell)$ is the function spectrum $F_{\ell\sma \ell^{op}}(\ell,\ell)$ of $(\ell-\ell)$-bimodule maps from $\ell$ to itself. The smash product in $(\ell\wedge\ell^{op})$-modules is functorial in each factor, so there is a canonical map
\[
THH_S(\ell)\wedge THH^S(\ell)\to THH^S(\ell)
\]
given by ``evaluating the function on the first factor''. Our cap product is the effect in homotopy of this pairing.

Many of the torsion patterns, both for the $v_0$ and $v_1$ Bockstein spectral sequences, arise from multiplication by powers of $\mu$. While no power of $\mu$ survives the Bockstein spectral sequences, the $\mu^k$ translates of permanent cycles do survive. Using the pairing with $\THC_S(\ell)$, we can actually connect these elements on the $E_\infty$-page.

We first note that certain topological Hochschild cohomology spectra inherit ``Hopf algebra'' type structures.  It was proven in \cite{AnRo} that when $R$ is commutative, $\THH(R)$ has the structure of a Hopf algebra spectrum over $R$, and hence the base extension $\THH(R;Q) \cong \THH(R) \sma_R Q$ inherits a Hopf algebra spectrum structure over $Q$ when $Q$ is a commutative $R$-algebra.

Let $D_Q$ denote the $Q$-Spanier-Whitehead dualization functor.  The dual $D_Q(C)$ of a Hopf algebra spectrum $C$ over $Q$ inherits a $Q$-algebra structure from the coalgebra structure, and there are natural maps of $Q$-algebras
\[
D_Q(C) \to D_Q(C \sma_Q C) \leftarrow D_Q(C) \sma_Q D_Q(C),
\]
where the first map is induced by the multiplication.  If the second map is a weak equivalence (such as when $C$ is a finite cell object, or when $Q$ is $H\Z_{(p)}$ or $H\F_p$ and $C$ has finitely generated homotopy groups), there is an induced Hopf algebra spectrum structure on $D_Q(C)$ up to homotopy.  In particular, $\THC_S(\ell;H\F_p)$ and $\THC_S(\ell;H\Z_{(p)})$ are both Hopf algebra spectra over $H\F_p$ and $H\Z_{(p)}$ respectively.

Since $\THH(\ell;H\F_p)$ is an $H\F_p$-algebra with finitely generated homotopy groups, we can dualize its homotopy groups directly to conclude that as a Hopf algebra,
\[
\THC^\ast_S(\ell;H\F_p)=E(x_{2p-1},x_{2p^2-1})\otimes \Gamma(c_1),
\]
where $\Gamma(c_1)$ denotes a divided power algebra on a class $c_1$ in degree $2p^2$, and the generators $x_{2p-1}$, $x_{2p^2-1}$, and $c_1$ are primitive.  The divided power generator $c_k=\gamma_k(c_1)$ is dual to $\mu^k$, so if it survives the $H\F_p$-based Adams spectral sequence in the category of $\ell$-modules to give an element of $\THC^\ast_S(\ell)$, then capping with it will undo the multiplications by $\mu^k$ that were seen on the $E_\infty$-page. However, since this module over the Steenrod algebra is negatively graded and not bounded below, there are convergence problems with the Adams spectral sequence. We instead compare with relative topological Hochschild cohomology.


\subsection{Relative topological Hochschild cohomology of $\ell$}
We write the remainder of the section with the assumption that $BP$ is an $E_\infty$ ring spectrum. If this is not the case, then we can replace $BP$ with $MU$. Many of the key points are the same; the notation is slightly simpler in the $BP$ case. To streamline notation further, we also let $\tau_i=\xi_{i+1}$ if $p=2$. We begin by recalling the homology of $\ell$ and $BP$. As is standard, we denote the image of a class under the canonical anti-automorphism by an over-line.

\begin{prop}
As an $\A_\ast$-sub-comodule algebra of $\A_\ast$,
\[
H_\ast\ell=\begin{cases}
\F_2[\bar{\tau}_0^2,\bar{\tau}_1^2,\bar{\tau}_2,\dots] & p=2, \\
\F_p[\bar{\xi}_1,\dots]\otimes E(\bar{\tau}_2,\dots) & p>2,
\end{cases}
\]
and
\[
H_\ast BP=\begin{cases}
\F_2[\bar{\tau}_0^2,\dots] & p=2, \\
\F_p[\bar{\xi}_1,\dots] & p>2.
\end{cases}
\]
\end{prop}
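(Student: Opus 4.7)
The plan is to handle $BP$ and $\ell$ separately, in each case using the unit map to $H\F_p$ to present the homology as an $\A_\ast$-subcomodule algebra of $\A_\ast$.

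For $H_\ast BP$, I would invoke Quillen's idempotent splitting of $MU_{(p)}$ into a wedge of suspensions of $BP$. Combined with Milnor's polynomial description of $H_\ast MU \subset \A_\ast$, the summand corresponding to the trivial Landweber-Novikov piece singles out $H_\ast BP$. At odd primes this is $\F_p[\bar{\xi}_1, \bar{\xi}_2, \ldots]$; at $p=2$ the relevant generators live in even degrees, giving the polynomial subalgebra on the squares $\bar{\tau}_i^2$. This is classical and can be cited, e.g., from Ravenel's ``Complex Cobordism.''

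For $H_\ast \ell$ at odd primes, I would identify $\ell$ with $BP\langle 1 \rangle$ and build it inductively from $BP$ via the cofiber sequences
\[
\Sigma^{|v_n|} BP\langle n\rangle \xrightarrow{v_n} BP\langle n\rangle \to BP\langle n-1\rangle, \quad n \geq 2.
\]
Because $v_n$ maps to zero under the Hurewicz map to $H\F_p$ in positive degree, each such sequence induces a short exact sequence of $\A_\ast$-comodule algebras, attaching a new class $\bar{\tau}_n$ in odd degree $2p^n - 1$. Since the previously constructed polynomial subalgebra is concentrated in even degrees, any non-trivial relation for $\bar{\tau}_n^2$ is ruled out by the odd/even grading, so the extension is forced to be exterior and we recover the stated formula for $H_\ast BP\langle 1 \rangle$. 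At $p=2$, where $\ell = ku_{(2)}$, the claim reduces to the well-known computation $H_\ast ku = \F_2[\bar{\zeta}_1^2, \bar{\zeta}_2^2, \bar{\zeta}_3, \bar{\zeta}_4, \ldots]$, which under the convention $\tau_i = \xi_{i+1}$ takes precisely the stated form. This can be cited from Adams' ``Stable Homotopy and Generalised Homology,'' or derived from the fiber sequence $ku \xrightarrow{\psi^3 - 1} ku \to H\Z_{(2)}$ using the known $H_\ast H\Z_{(2)}$.

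The step I expect to be the main obstacle is pinning down the multiplicative structure at $p=2$. The inductive cofiber argument identifies the comodule structure and the set of new generators, but at $p=2$ both $\ell$ and $BP$ acquire polynomial generators in even degrees, and nothing formal prevents the generators $\bar{\tau}_i$ themselves (rather than their squares) from appearing. Resolving this requires either appealing to the $ku$-specific computation referenced above or running a small Adams spectral sequence comparison -- which is why we phrase the $p=2$ case as a reduction to a classical computation rather than an independent derivation.
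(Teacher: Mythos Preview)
The paper does not prove this proposition; it is introduced with ``We begin by recalling the homology of $\ell$ and $BP$'' and stated as a classical fact, so there is no proof in the paper to compare against.

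Your outline is a standard and correct way to establish the result. Two small comments. First, the ``induction from $BP$'' via the cofibre sequences $\Sigma^{|v_n|}BP\langle n\rangle \to BP\langle n\rangle \to BP\langle n-1\rangle$ is an infinite descent, since $BP$ is not $BP\langle n\rangle$ for any finite $n$; you should either note that in each fixed degree only finitely many of these sequences are relevant, or run the induction upward starting from $H\Z_{(p)}=BP\langle 0\rangle$ (whose homology you know) rather than downward from $BP$. Second, to pin down that the new odd class added at stage $n$ is exactly $\bar{\tau}_n$ (rather than merely some class in that degree), you need to invoke the comodule structure under $\A_\ast$ via the map to $H\F_p$---this is implicit in your phrase ``attaching a new class $\bar{\tau}_n$'' but deserves a sentence. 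At odd primes the exterior relation $\bar{\tau}_n^2=0$ is automatic from graded commutativity, so your degree argument there is unnecessary; your acknowledgement that the $p=2$ multiplicative structure genuinely requires the classical $ku$ computation is accurate.
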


\begin{prop}
As a ring,
\[
\pi_\ast H\F_p\wedge_{BP}\ell=E(\bar{\tau}_2,\bar{\tau}_3 \dots),
\]
and the map from ${H\F_p}_* \ell$ induced by the unit $S^0\to BP$ is the canonical quotient.
\end{prop}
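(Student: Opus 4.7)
The plan is to apply the K\"unneth spectral sequence of Corollary~\ref{cor:KunnethSS},
\[
E_2^{s,t}=\Tor^{BP_*}_{s,t}(\F_p,\ell_*)\Longrightarrow \pi_{s+t}(H\F_p\wedge_{BP}\ell),
\]
and to compute the $E_2$-term algebraically by a Koszul resolution of $\ell_*$ over $BP_*$. Since $\ell_*=\Z_{(p)}[v_1]=BP_*/(v_2,v_3,\dots)$ and $(v_2,v_3,\dots)$ is a regular sequence in $BP_*=\Z_{(p)}[v_1,v_2,\dots]$, the Koszul complex $\Lambda_{BP_*}(e_i:i\geq 2)$ with $d(e_i)=v_i$ and $|e_i|=|v_i|+1=2p^i-1$ is a free resolution. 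Each $v_i$ acts as zero on $\F_p=BP_*/(p,v_1,v_2,\dots)$, so tensoring over $BP_*$ leaves a complex with zero differential, giving
\[
\Tor^{BP_*}_{**}(\F_p,\ell_*)=\Lambda_{\F_p}(e_i:i\geq 2),
\]
where $e_i$ sits in total degree $2p^i-1$.

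To verify that the spectral sequence collapses to this exterior algebra, I would use the $E_\infty$-assumption on $BP$ to realize $\ell$ as the iterated quotient $BP/(v_2,v_3,\dots)$ built from the cofiber sequences $\Sigma^{|v_i|}BP\xrightarrow{v_i}BP\to BP/v_i$. Smashing each such sequence over $BP$ with $H\F_p$, the map $v_i\cdot$ is null on $H\F_p$, so
\[
H\F_p\wedge_{BP}BP/v_i\simeq H\F_p\vee\Sigma^{2p^i-1}H\F_p.
\]
Iterating (with a standard Milnor $\varprojlim^1$ argument for the infinite quotient) identifies $\pi_*(H\F_p\wedge_{BP}\ell)$ directly with $\Lambda_{\F_p}(\bar\tau_i:i\geq 2)$, each $\bar\tau_i$ being detected by the suspended summand in $H\F_p/v_i$. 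This matches the $E_2$-page both additively and multiplicatively, so the spectral sequence collapses with no extensions.

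For the second assertion, the unit $S\to BP$ induces a natural map of $H\F_p$-algebras
\[
\phi\colon H\F_p\wedge\ell\longrightarrow H\F_p\wedge_{BP}\ell,
\]
and hence a map of $\F_p$-algebras $H_*\ell\to\Lambda_{\F_p}(\bar\tau_i:i\geq 2)$. By the construction above, each $\bar\tau_i\in H_*\ell$ (for $i\geq 2$) is detected by the cofiber of $v_i$ used to build $\ell$ out of $BP$, and so is sent to $\bar\tau_i$ on the target side. The remaining algebra generators of $H_*\ell$ --- the $\bar\xi_j$ at odd primes and $\bar\tau_0^2,\bar\tau_1^2$ at $p=2$, all pulled back from $H_*BP$ --- live in even degrees $2(p^j-1)$. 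A direct $p$-adic check rules out any class in such a degree on the right: a nonzero monomial of $\Lambda_{\F_p}(\bar\tau_i:i\geq 2)$ has degree $2\sum_{k}p^{i_k}-|S|$ for a nonempty $S\subset\{2,3,\dots\}$, and base-$p$ uniqueness (digits in $\{0,1\}$, supported in positions $\geq 2$) shows that $2p^j-2$ is never of this form. Hence these generators must map to zero, so $\phi$ is exactly the canonical algebra quotient by the ideal generated by the positive-degree image of $H_*BP$.

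The main obstacle in this plan is the careful realization of the infinite iterated quotient $BP/(v_2,v_3,\dots)$ as a $BP$-module; given the $E_\infty$ assumption on $BP$, this is the standard construction of $BP\langle 1\rangle$ as a $BP$-algebra and the rest of the argument is essentially algebraic.
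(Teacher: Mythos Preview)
Your argument is correct, but it takes a longer road than the paper's. The paper observes the equivalence of ring spectra
\[
H\F_p\wedge_{BP}\ell \;\simeq\; H\F_p \wedge_{H\F_p\wedge BP} (H\F_p\wedge\ell),
\]
i.e.\ it base-changes the whole problem to $H\F_p$-modules. Since $H_*(\ell;\F_p)$ is visibly \emph{free} over $H_*(BP;\F_p)$ (the previous proposition exhibits it as $H_*BP \otimes E(\bar\tau_2,\bar\tau_3,\dots)$), the K\"unneth spectral sequence over $H\F_p\wedge BP$ is concentrated in filtration~$0$ and yields $\F_p \otimes_{H_*BP} H_*\ell = E(\bar\tau_2,\bar\tau_3,\dots)$ immediately, as a ring. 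The map from $H_*\ell$ is then literally the quotient by the positive-degree part of $H_*BP$, which is the ``canonical quotient'' in the statement. Both assertions fall out in one line.

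By contrast, you work over $BP_*$ itself, where neither module is free and the $E_2$-term $\Tor^{BP_*}(\F_p,\ell_*)$ already spreads across many filtrations. Your Koszul computation of this $\Tor$ is fine, and the iterated-cofiber argument does establish collapse, but it is doing extra work that the paper's freeness observation avoids entirely. For the second assertion, your $p$-adic degree check is valid but unnecessarily intricate: a cleaner way is to note that the composite $H\F_p\wedge BP \to H\F_p\wedge\ell \to H\F_p\wedge_{BP}\ell$ factors through $H\F_p\wedge_{BP}BP \simeq H\F_p$, so every positive-degree class coming from $H_*BP$ (i.e.\ the $\bar\xi_j$ at odd primes, and $\bar\tau_0^2,\bar\tau_1^2$ at $p=2$) is automatically sent to zero. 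What your approach does buy is an explicit identification of the exterior generators with Koszul classes $e_i$ dual to $v_i$, which is exactly how they are used downstream in computing $\THC_{BP}^*(\ell;H\F_p)$.
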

\begin{proof}
We use the equivalence in ring spectra
\[
H\F_p\wedge_{BP}\ell\simeq H\F_p\wedge_{H\F_p\wedge BP}(H\F_p\wedge\ell).
\]
The K\"unneth theorem then gives both parts of the theorem, since $H_*(\ell;\F_p)$ is free over $H_*(BP;\F_p)$.
\end{proof}

The universal coefficient spectral sequence on the above exterior algebra then collapses, telling us that
\[
\THC_{BP}^\ast(\ell;H\F_p)=\F_p[e_1,e_2,\dots],
\]
where $e_i$ is the class in $\Ext$ corresponding to $\bar{\tau}_{i+1}$.

Since this is concentrated in even degrees, we conclude that the Bockstein spectral sequences taking us from $\THC_{BP}(\ell;H\F_p)$ to $\THC_{BP}(\ell)$ collapse, giving
\[
\THC_{BP}^\ast(\ell)=\ell_\ast[\![e_1,\dots]\!].
\]

The structure map $S^0\to BP$ induces a commutative diagram
\[
\xymatrix{{\THC_{BP}(\ell)}\ar[d] \ar[r] & {\THC_{BP}(\ell;H\F_p)} \ar[d]
\\
{\THC_S(\ell)} \ar[r] & {\THC_S(\ell;H\F_p)}}
\]

We want to show that the elements $c_k$ in $\THC^\ast(\ell;H\F_p)$ lift to $\THC^\ast(\ell)$. However, we can see this using the commutativity of the above diagram.

\begin{prop}
The map from $\THC^\ast_{BP}(\ell;H\F_p)$ to $\THC^\ast_S(\ell;H\F_p)$ sends $e_k$ to $c_{p^{k-1}}$.
\end{prop}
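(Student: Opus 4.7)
The plan is to apply naturality of the universal coefficient spectral sequence from Corollary~\ref{cor:UnivCoefs} to the change-of-rings map of coefficient algebras, and then match the resulting image with $c_{p^{k-1}}$ by a degree count combined with a filtration argument.

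By the previous proposition, the unit $S\to BP$ realizes the map on coefficients as the quotient
\[
\F_p[\bar\xi_1,\ldots]\otimes E(\bar\tau_2,\ldots)\twoheadrightarrow E(\bar\tau_2,\ldots)
\]
killing each $\bar\xi_i$. Naturality of the universal coefficient spectral sequence in the coefficient ring identifies the map $\THC^\ast_{BP}(\ell;H\F_p)\to\THC^\ast_S(\ell;H\F_p)$ on $E_2$-pages with the change-of-rings map
\[
\Ext^{\ast\ast}_{E(\bar\tau)}(\F_p,\F_p)\to\Ext^{\ast\ast}_{\F_p[\bar\xi]\otimes E(\bar\tau)}(\F_p,\F_p).
\]
In Ext-filtration $1$, the generator $e_k$ is dual to $\bar\tau_{k+1}$ and its image is the corresponding Ext class of the target. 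A direct degree count shows that the target Ext group in bidegree $(1,2p^{k+1}-1)$ is one-dimensional, since $\bar\tau_{k+1}$ is the only indecomposable of the coefficient algebra with internal degree $2p^{k+1}-1$; hence the image is, up to a $p$-local unit, the unique Ext-filtration-$1$ class of that bidegree.

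To match this class with $c_{p^{k-1}}$, I would argue by a further degree count in the Hopf algebra presentation $\THC^\ast_S(\ell;H\F_p)=E(x_{2p-1},x_{2p^2-1})\otimes\Gamma(c_1)$: the only even-degree monomial of total degree $2p^{k+1}$ is $c_{p^{k-1}}$, so the image must be $\lambda\cdot c_{p^{k-1}}$ for some $\lambda\in\F_p$. The nontriviality $\lambda\neq 0$ will follow from dualizing the identification $\mu^{p^{k-1}}\doteq\sigma\bar\tau_{k+1}$ in $\THH_\ast(\ell;H\F_p)$, which places $c_{p^{k-1}}$ in Adams filtration one as precisely the Ext class dual to $\bar\tau_{k+1}$.

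The main obstacle is this last identification, which is the iterated form of the Frobenius-type multiplicative extension $(\sigma\bar\tau_i)^p\doteq\sigma\bar\tau_{i+1}$ in the B\"okstedt spectral sequence. This extension is a standard consequence of the Dyer--Lashof structure on topological Hochschild homology of $E_\infty$-algebras, parallel to B\"okstedt's computation that $\pi_\ast\THH(H\F_p)$ is a polynomial rather than divided-power algebra. Once this is in hand, the two descriptions of the image of $e_k$ agree up to a $p$-local unit, yielding $e_k\doteq c_{p^{k-1}}$.
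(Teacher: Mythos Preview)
Your argument is correct and follows essentially the same route as the paper: both reduce the question to the identification, via the B\"okstedt spectral sequence, of the Ext class coming from $\bar\tau_{k+1}$ with the divided power $\gamma_{p^{k-1}}(c_1)=c_{p^{k-1}}$. The paper states this identification in one sentence, while you unpack it by a degree count in the target Hopf algebra together with the Dyer--Lashof argument for the multiplicative extension $(\sigma\bar\tau_i)^p\doteq\sigma\bar\tau_{i+1}$; this is exactly the content behind the paper's appeal to ``the classical B\"okstedt spectral sequence.''
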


\begin{proof}
This is immediate from our discussion of the map in homotopy
\[
\pi_\ast(H\F_p\sma \ell)\to \pi_\ast(H\F_p\sma_{BP}\ell)
\]
induced by the unit $S^0\to BP$. The classical B\"okstedt spectral sequence identifies the generators in $\Ext_{H_*(\ell)}$ coming from $\bar{\tau}_{k+2}$ with $\gamma_{p^{k}}(c_1)$.
\end{proof}

\begin{remark}
In order to use $\THC$ relative to $MU$ rather than relative to $BP$, the following changes must be noted.  The ring $\pi_*(H\F_p \wedge_{MU} \ell)$ is the ring previously calculated as $\pi_*(H\F_p \wedge_{BP} \ell)$ tensored with an exterior algebra on classes in odd degrees. The universal coefficient spectral sequence then shows that
$\THC_{MU}^*(\ell;H\F_p)$ is the tensor product of the ring calculated as $\THC_{BP}^*(\ell;H\F_p)$ with a polynomial algebra on generators in even degrees.
\end{remark}

We can therefore conclude that in fact the elements $c_k$ all survive to homotopy classes in $\THC^\ast_S(\ell)$. It should be noted, however, that this method does not rule out the possibility that they are torsion classes. To better understand this, we analyze $\THC^\ast_S(\ell;H\Z_{(p)}).$

\begin{thm}\label{thm:THCHopf}
As a Hopf algebra,
\[
\THC^\ast_S(\ell;H\Z_{(p)})=E(x_{2p-1}) \otimes \Gamma(c_1)/(pc_1),
\]
where $x_{2p-1}$ and $c_1$ are again primitive.
\end{thm}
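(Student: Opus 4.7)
The plan is to run the mod-$p$ Bockstein spectral sequence
\[
\THC^\ast_S(\ell; H\F_p) \otimes \F_p[v_0] \;\Longrightarrow\; \THC^\ast_S(\ell; H\Z_{(p)})^\wedge_p,
\]
obtained by applying $F_{\ell \sma \ell^{op}}(\ell, -)$ to the cofiber sequence $H\Z_{(p)} \xrightarrow{p} H\Z_{(p)} \to H\F_p$. The $E_1$-page is $E(x_{2p-1}, x_{2p^2-1}) \otimes \Gamma(c_1) \otimes \F_p[v_0]$; since the abutment will be finitely generated in each internal degree, the $p$-completion decoration affects only the identification of the torsion-free part.

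The differentials are controlled by Spanier-Whitehead $H\Z_{(p)}$-duality with the $\THH$ Bockstein spectral sequence \eqref{B2}. The chain of adjunctions
\[
\THC_S(\ell; H\Z_{(p)}) \;\simeq\; F_{H\Z_{(p)}}\bigl(H\Z_{(p)} \sma_{\ell \sma \ell^{op}} \ell,\, H\Z_{(p)}\bigr) \;=\; F_{H\Z_{(p)}}\bigl(\THH(\ell; H\Z_{(p)}),\, H\Z_{(p)}\bigr)
\]
together with its analogue at $H\F_p$-coefficients exhibits the present Bockstein cofiber sequence as the $H\Z_{(p)}$-dual of the $\THH$ one, so the $d_r$ differentials correspond under duality. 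From $d_1(\mu) = v_0\lambda_2$ one reads $d_1(x_{2p^2-1}) = v_0 c_1$, while $d_1(x_{2p-1}) = d_1(c_1) = 0$ for degree reasons (there are no nonzero classes of $E_1$ in degrees $2p$ or $2p^2+1$). The higher Bocksteins $d_{i+1}(\mu^{p^i}) = v_0^{i+1}\mu^{p^i-1}\lambda_2$ from Subsection 3.2 then dualize to
\[
d_{i+1}\bigl(x_{2p^2-1}\cdot c_{p^i-1}\bigr) \;=\; v_0^{i+1}\, c_{p^i},
\]
with an independent derivation available from May's formula applied to the $E_\infty$-algebra $\THC^\ast_S(\ell; H\F_p)$. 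Extending by the Leibniz rule gives every differential.

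After these differentials every class containing an $x_{2p^2-1}$ factor dies, leaving only the classes $x_{2p-1}^\epsilon c_k$ for $\epsilon \in \{0,1\}$ and $k \geq 0$, with $v_0$-tower lengths matching the order $p^{\nu_p(k)+1}$ of $c_k$ in $\Gamma(c_1)/(pc_1)$. To promote this additive identification to the stated Hopf algebra identification, I would use that $\THC^\ast_S(\ell; H\Z_{(p)}) \to \THC^\ast_S(\ell; H\F_p)$ is a map of Hopf algebras through which $x_{2p-1}$ and each $c_k$ inherit primitive, respectively divided-power, coproducts. The main obstacle I anticipate is resolving the multiplicative and coalgebra extensions integrally—in particular confirming that $pc_1 = 0$ is the only new relation beyond those in $\Gamma(c_1)$—which is pinned down by combining the Hopf algebra constraint with the torsion pattern on $E_\infty$; rigorously setting up the $H\Z_{(p)}$-Spanier-Whitehead duality is a secondary technicality, handled by a skeletal filtration of $\THH(\ell; H\Z_{(p)})$ since it is not itself a finite $H\Z_{(p)}$-spectrum.
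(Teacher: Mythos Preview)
Your proposal is correct and follows essentially the same route as the paper: run the $v_0$-Bockstein spectral sequence starting from $\THC^\ast_S(\ell;H\F_p)$, and use $H\Z_{(p)}$-duality with the already-computed $\THH(\ell;H\Z_{(p)})$ to force the differentials $d_{i+1}(x_{2p^2-1}c_{p^i-1}) \doteq v_0^{i+1}c_{p^i}$. The paper handles your two anticipated technicalities in the surrounding text rather than in the proof proper: the duality is justified by the finite-generation of $\THH_*(\ell;H\Z_{(p)})$ in each degree, and the Hopf-algebra structure on homotopy (despite non-flatness) is dealt with by a degree argument showing the $\Tor$-terms cannot interfere with the coproduct.
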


\begin{remark}
As $\THC_S^*(\ell;H\Z_{(p)})$ is not flat over $\Z_{(p)}$, it is not immediate that the comultiplication on the topological Hochschild cohomology spectrum gives rise to a comultiplication on the level of homotopy groups.  However, the classes in $\THC_S^*(\ell;H\Z_{(p)})$ lie in degrees congruent to $0$ and $2p-1$ mod $2p^2$, and hence the Tor-terms in the homotopy groups of
\[ \THC_S(\ell;H\Z_{(p)}) \sma_{H\Z_{(p)}} \THC_S(\ell;H\Z_{(p)}), \]
which lie in degrees congruent to $1,2p,$ and $4p-1$ mod $2p^2$, cannot be in the image.
\end{remark}

\begin{proof}[Proof of Theorem~\ref{thm:THCHopf}]
We first note that $\THH(\ell; H\Z_{(p)})$ is a commutative Hopf algebra spectrum over $H\Z_{(p)}$, and the homotopy groups are finitely generated over $H\Z_{(p)}$ in
each degree. The $H\Z_{(p)}$-dual, $\THC_S(\ell;H\Z_{(p)})$, therefore has finitely generated homotopy groups in each degree, and hence the Bockstein spectral sequence
\[
\THC^\ast_S(\ell; H\F_p)[v_0] \Rightarrow \THC^\ast_S(\ell; H\Z_{(p)})_p^\wedge
\]
is a convergent spectral sequence.  Multiplication by $p$ commutes with the comultiplication, and hence this Bockstein spectral sequence is a spectral sequence of Hopf algebras.  In order for the result to be $H\Z_{(p)}$-dual to $\THH(\ell; H\Z_{(p)})$, the differentials are generated by those of the form
\[
d_{i+1}(c_{p^i - 1} x_{2p^2-1}) \doteq v_0^{i+1} c_{p^i}
\]
for $i \geq 0$, where we use the convention that $c_0 = 1$.
\end{proof}

This theorem allows us to compute the cap product
\[
\THC^k_S(\ell;H\Z_{(p)})\otimes\THH_m(\ell;H\Z_{(p)})\to\THH_{m-k}(\ell;H\Z_{(p)}).
\]

\begin{cor}\label{cor:ComputingCaps}
For $k < n$, the cap product satisfies the following formulae
\[
c_k \smallfrown a_n \doteq \binom{n-1}{k} a_{n-k}\text{ and }
c_k \smallfrown b_n \doteq \binom{n-1}{k} b_{n-k}.
\]
\end{cor}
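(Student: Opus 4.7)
The plan is to extract both cap product formulas from the Hopf algebra structure of Theorem~\ref{thm:THCHopf}. Since $c_1$ is primitive and $c_k=\gamma_k(c_1)$, the coproduct satisfies $\Delta(c_k)=\sum_{i+j=k}c_i\otimes c_j$. The cap product is the $H\Z_{(p)}$-dual of the comultiplication on the Hopf algebra spectrum $\THH(\ell;H\Z_{(p)})$, and combining this with the multiplicative structure gives the Hasse-type Leibniz rule
\[
c_k\smallfrown(xy)=\sum_{i+j=k}(c_i\smallfrown x)(c_j\smallfrown y),\qquad c_0=1.
\]

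I would first work modulo $p$, using naturality of the cap product along the reduction $H\Z_{(p)}\to H\F_p$, which commutes with cap product and sends $c_k\mapsto c_k$, $a_n\mapsto\mu^{n-1}\lambda_2$, and $b_n\mapsto\mu^{n-1}\lambda_1\lambda_2$. In $\THH_\ast(\ell;H\F_p)=E(\lambda_1,\lambda_2)\otimes\F_p[\mu]$ the generators $\mu,\lambda_1,\lambda_2$ are primitive. Duality gives $c_1\smallfrown\mu=1$ and $c_k\smallfrown\mu=0$ for $k\neq 1$, while connectivity forces $c_k\smallfrown\lambda_i=0$ for all $k\geq 1$. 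An easy induction on $n$ via the Hasse rule yields $c_k\smallfrown\mu^{n-1}=\binom{n-1}{k}\mu^{n-1-k}$, and one more application of the Hasse rule (or simply using $b_n=\lambda_1 a_n$ for the second formula) produces both identities modulo $p$.

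To lift to $\Z_{(p)}$, recall from Section~\ref{sec:HZSS} that $\rTHH_{2p^2(n-k)-1}(\ell;H\Z_{(p)})$ is cyclic of order $p^{\nu_p(n-k)+1}$, generated by $a_{n-k}$, with the analogous statement for $b_{n-k}$, so $c_k\smallfrown a_n$ is automatically a $\Z_{(p)}$-multiple of $a_{n-k}$. When $p\nmid\binom{n-1}{k}$ the nonvanishing mod-$p$ answer forces this multiple to be a $p$-local unit times $\binom{n-1}{k}$, yielding the $\doteq$ claim immediately. The main obstacle is the case $p\mid\binom{n-1}{k}$, where the mod-$p$ calculation vanishes and gives no control on the $p$-adic valuation. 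My plan there is induction on $k$, exploiting the product relation $c_i\cdot c_j=\binom{i+j}{i}c_{i+j}$ together with associativity $(c_ic_j)\smallfrown a_n=c_i\smallfrown(c_j\smallfrown a_n)$; when $k$ is not a prime power, a factorization $k=i+j$ with $\binom{i+j}{i}$ a $p$-local unit (which exists by the base-$p$ digit argument) reduces to the inductive cases. The genuinely delicate case $k=p^r$ is handled using the Bockstein relation $d_{r+1}(c_{p^r-1}x_{2p^2-1})\doteq v_0^{r+1}c_{p^r}$ from the proof of Theorem~\ref{thm:THCHopf}, dualized to pin down the $p$-divisibility of $c_{p^r}\smallfrown a_n$.
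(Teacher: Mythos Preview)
The paper gives no proof of this corollary at all; it is stated as an immediate consequence of Theorem~\ref{thm:THCHopf}, the point being that once $\THC^*_S(\ell;H\Z_{(p)})$ is identified as the $H\Z_{(p)}$--Spanier--Whitehead dual Hopf algebra, the cap product is determined by the cup product $c_ic_j=\binom{i+j}{i}c_{i+j}$ via cup--cap adjunction. So there is nothing to ``compare'' your argument to on the paper's side.

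Your route via reduction mod $p$ is sound for the easy half but has a genuine gap. The mod-$p$ calculation and the Hasse--Leibniz rule are correct, and they settle the case $p\nmid\binom{n-1}{k}$. Your inductive factorization for non-prime-power $k$ is also fine. But the base case $k=p^r$ is not actually handled by what you wrote: invoking the Bockstein differential $d_{r+1}(c_{p^r-1}x_{2p^2-1})\doteq v_0^{r+1}c_{p^r}$ and saying ``dualize'' is a gesture, not an argument. Concretely, already for $k=1$ and $n=p+1$ your mod-$p$ input vanishes, the target group $\langle a_p\rangle\cong\Z/p^2$ has more than one $p$-divisible element, and nothing you have written distinguishes $c_1\smallfrown a_{p+1}=pa_p$ from $c_1\smallfrown a_{p+1}=0$. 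More generally, the multiplicative relation $c_1c_{k-1}=kc_k$ only determines $c_k\smallfrown a_n$ up to an ambiguity in the $k$-torsion of the target, which is exactly where the content lies.

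One way to close the gap cleanly, bypassing your induction entirely: lift $c_k$ to $\THC^*_S(\ell)$ (done in the paper) so that it commutes with the integral Bockstein $\beta$ in the long exact sequence for $H\Z_{(p)}\xrightarrow{p}H\Z_{(p)}\to H\F_p$. Since $\beta(\mu^n)\doteq p^{\nu_p(n)}a_n$ and $c_k\smallfrown\mu^n=\binom{n}{k}\mu^{n-k}$ over $H\F_p$, naturality gives
\[
p^{\nu_p(n)}(c_k\smallfrown a_n)\;\doteq\;\binom{n}{k}\,p^{\nu_p(n-k)}a_{n-k}\;\doteq\;p^{\nu_p(n)}\binom{n-1}{k}a_{n-k},
\]
using $\binom{n}{k}=\tfrac{n}{n-k}\binom{n-1}{k}$. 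When $\nu_p(n)=0$ this already yields the formula on the nose; to cover $\nu_p(n)>0$ you still need an extra step (e.g.\ combine with the mod-$p$ reduction, or bootstrap from nearby $n'$ with $\nu_p(n')=0$ via further caps). The point is that some such argument is required, and your proposal does not supply it.
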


We see that $c_{p^k}$ is $p^{k+1}$ torsion in $\THC^\ast_S(\ell;H\Z_{(p)})$, which means that in $\THC^*_S(\ell)$, $p^k c_{p^k} \neq 0$. However, the Adams spectral sequence for $\THC^\ast_S(\ell)$ suggests that in fact these classes are torsion free.
%

Naturality of the cap product moreover implies that it commutes with the differentials in the Bockstein spectral sequences. We will exploit both of these remarks to compute the differentials and extensions in the remaining spectral sequence.

\section{The last Bockstein spectral sequence and $\rTHH_\ast(\ell)$}\label{sec:SSB4}

The $v_1$-Bockstein spectral sequence is pictured for $p=2$ through dimension $35$ in Figure~\ref{fig:BSS2}. Here multiplication by $2$ preserves the filtration, though we have drawn it as increasing the filtration by $1$ to reduce clutter.

\begin{figure}[ht]
\centering
\includegraphics[width=.9\textwidth]{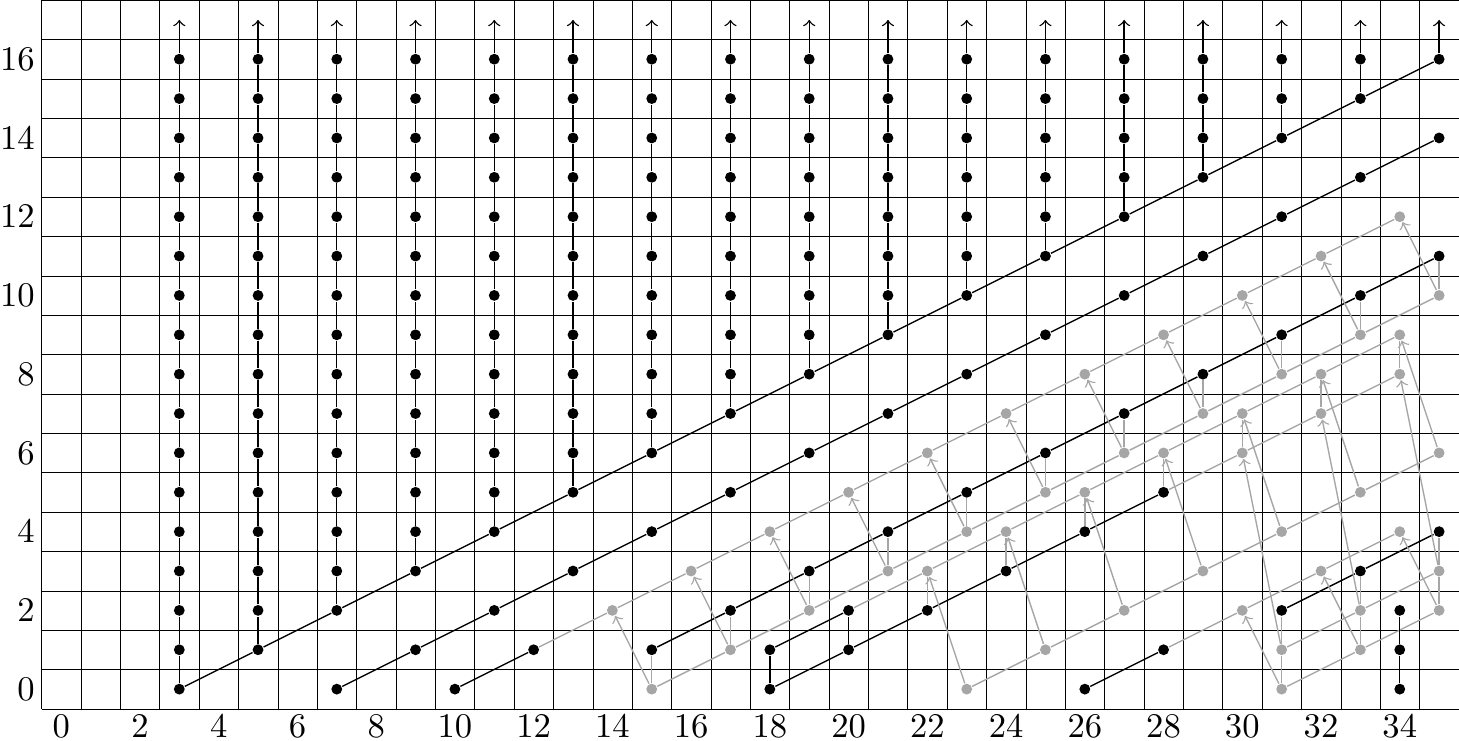}
\caption{The $v_1$-Bockstein spectral sequence converging to $\rTHH_\ast(ku)$} \label{fig:BSS2}
\end{figure}

We can now get all the differentials and extensions in this spectral sequence. Recall that in Section~\ref{sec:HZSS}, we defined torsion elements $a_i=\mu^{i-1}\lambda_2$ and $b_i=\lambda_1 a_i$. Since these and $\lambda_1$ were all of the additive generators of $\rTHH_\ast(\ell;H\Z_{(p)})$, we can immediately conclude the following proposition for degree reasons.

\begin{prop}
The $v_1$ tower on $\lambda_1$ survives to $E_\infty$.
\end{prop}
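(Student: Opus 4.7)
The plan is to show that each class $v_1^k\lambda_1$, for $k\geq 0$, is both a permanent cycle and not a boundary in the $v_1$-Bockstein spectral sequence. I would handle outgoing and incoming differentials separately; the key structural input is that, in positive degrees, the even-degree part of $\rTHH_\ast(\ell;H\Z_{(p)})$ consists entirely of the $p$-power torsion classes $b_i$ identified in Section~\ref{sec:HZSS}.

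For outgoing differentials, a pure bidegree count suffices. The element $v_1^k\lambda_1$ lies at bidegree $(k(2p-2)+2p-1,\,k)$, so for $r\geq 1$ the target of $d_r$ would sit at bidegree $(k(2p-2)+2p-2,\,k+r)$ and take the form $v_1^{k+r}z$ with $z\in\rTHH_\ast(\ell;H\Z_{(p)})$ of degree $(1-r)(2p-2)\leq 0$. Since $\rTHH_\ast(\ell;H\Z_{(p)})$ is concentrated in degrees $\geq 2p-1$, no such $z$ exists.

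For incoming differentials, I would combine the bidegree count with the rational computation. Any $d_r$ terminating on $v_1^k\lambda_1$ originates at bidegree $(k(2p-2)+2p,\,k-r)$, so the source has the form $v_1^{k-r}w$ with $w\in\rTHH_\ast(\ell;H\Z_{(p)})$ of positive even degree $r(2p-2)+2p$. Such a $w$ is a $\Z_{(p)}$-linear combination of the $b_i$'s, hence $p$-power torsion, and the same conclusion holds for any $E_r$-representative in that bidegree since subquotients preserve torsion. But $v_1^k\lambda_1$ is not $p$-power torsion: the rational B\"okstedt calculation $\THH_\ast(\ell)\otimes\Q \cong \Q[\lambda_1,v_1]/\lambda_1^2$ shows it lifts to a torsion-free element of $\rTHH_\ast(\ell)$. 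Hence no such differential can exist.

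The main obstacle is the incoming case, since pure degree-counting does not rule out all potential sources. At $p=2$, for instance, the bidegree of $d_3(b_1)$ coincides with that of $v_1^3\lambda_1$, so one needs the torsion-versus-torsion-free contrast from the rational calculation to close the gap. Once both directions are settled, the entire $v_1$-tower on $\lambda_1$ survives to $E_\infty$, contributing the $\ell_\ast$-summand generated by $\lambda_1$ (before the hidden extensions that produce the more $p$-divisible elements recorded in Theorem~\ref{thm:v1Divisibility}).
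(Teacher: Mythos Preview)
Your argument is correct, and your treatment of outgoing differentials is exactly the paper's one-line ``for degree reasons'' proof: the target of $d_r$ on $v_1^k\lambda_1$ would require a class of $\rTHH_\ast(\ell;H\Z_{(p)})$ in internal degree $(1-r)(2p-2)\leq 0$, where there is nothing. The paper's proof stops there; in context the proposition is effectively asserting that the $\lambda_1$-tower consists of permanent cycles, and the non-boundary claim is absorbed into the subsequent explicit determination of all differentials (Theorem~\ref{thm:B4diff}), which shows every nonzero differential lands on a $v_1$-multiple of some $b_i$.

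You go further and rule out incoming differentials directly via the torsion-versus-torsion-free contrast with the rational computation. This is a valid and more self-contained route than the paper's. One small tightening: your phrase ``it lifts to a torsion-free element of $\rTHH_\ast(\ell)$'' is mildly circular, since speaking of a lift presupposes survival to $E_\infty$. The clean formulation is to rationalize the spectral sequence itself. Since $\Q$ is flat over $\Z_{(p)}$, tensoring the exact couple with $\Q$ commutes with passing to derived couples, so $E_r\otimes\Q$ is the $r$th page of the rationalized spectral sequence; its $E_1$ is $\Q[v_1]\cdot\lambda_1$, concentrated in odd total degree, hence collapses. Thus $v_1^k\lambda_1$ has nonzero image in $E_r\otimes\Q$ for every $r$, i.e.\ is non-torsion on every page, whereas any potential source $v_1^{k-r}w$ lies in a bidegree that on $E_1$ is spanned by torsion classes $b_i$ and so remains torsion on every later page. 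A $\Z_{(p)}$-linear differential cannot send a torsion class onto a non-torsion one, and your conclusion follows.
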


\begin{lemma}\label{lem:biCycles}
The classes $b_{p^i}$ are permanent cycles for all $i$.
\end{lemma}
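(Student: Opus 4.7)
The plan is to exploit the multiplicativity of the $v_1$-Bockstein spectral sequence together with the decomposition $b_{p^i}=\lambda_1\cdot a_{p^i}$ and the relation $\lambda_1^2=0$ in $\rTHH_*(\ell;H\Z_{(p)})$.

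First, by the preceding Proposition $\lambda_1$ is a permanent cycle, so the Leibniz rule in the multiplicative $v_1$-Bockstein spectral sequence gives
\[
d_r(b_{p^i}) \;=\; d_r(\lambda_1\cdot a_{p^i}) \;=\; (-1)^{|\lambda_1|}\lambda_1\cdot d_r(a_{p^i}).
\]
Hence it suffices to check that $\lambda_1$ annihilates every possible value of $d_r(a_{p^i})$.

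I would then use a parity argument. The class $d_r(a_{p^i})$ has even total degree since $|a_{p^i}|=2p^{i+2}-1$ is odd, and the Proposition of Section~\ref{sec:HZSS} identifies the even-degree additive generators of $\rTHH_*(\ell;H\Z_{(p)})$ as exactly the $b_j$. Therefore $d_r(a_{p^i})$ is a $\Z_{(p)}$-linear combination of classes $v_1^{r_j} b_j$, and each term multiplies against $\lambda_1$ to give $v_1^{r_j}\,\lambda_1\cdot b_j = v_1^{r_j}\lambda_1^2 a_j$, which vanishes.

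The one point that warrants verification is the identity $\lambda_1^2=0$ in $\rTHH_*(\ell;H\Z_{(p)})$. At odd primes this follows immediately from graded commutativity. At $p=2$, graded commutativity only gives $2\lambda_1^2=0$, but a glance at the additive structure from Section~\ref{sec:HZSS} confirms that there is no nonzero additive generator in total degree $2|\lambda_1|=4p-2$ (the smallest even-degree generator $b_1$ sits in degree $2p^2+2p-2$), so $\lambda_1^2$ must already be zero. This is the only subtle step, and it is a quick degree count.
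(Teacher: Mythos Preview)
Your argument has a genuine gap. The Leibniz rule $d_r(\lambda_1\cdot a_{p^i})=\pm\lambda_1\cdot d_r(a_{p^i})$ is only valid on the page $E_r$ provided that $a_{p^i}$ itself survives to $E_r$. You have implicitly assumed this for every $r$, but there is no reason for it to hold, and in fact it does not: once the differentials are determined (Theorem~\ref{thm:B4diff}), one sees that $a_{p^i}$ supports a nonzero $d_p$ for $i\geq 1$, so from $E_{p+1}$ onward $a_{p^i}$ is no longer present. Since $b_{p^i}$ sits in filtration~$0$ it cannot be a boundary, so it persists to $E_{p+1}$; however, there is then no evident factorization $b_{p^i}=\lambda_1\cdot z$ available on that page (the only filtration-$0$ class in the correct odd degree was $a_{p^i}$, which is gone). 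Your parity/annihilation argument therefore says nothing about $d_r(b_{p^i})$ for $r>p$.

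This is exactly why the paper takes a different route: it uses the analysis of the \emph{third} Bockstein spectral sequence (Lemma~\ref{lem:zerocyclic}(c), Lemma~\ref{lem:bpiCycles}, and Corollary~\ref{cor:xn0primesurvive}) to produce an actual lift of $\lambda_1\lambda_2\mu^{p^i-1}$ to $\rTHH_*(\ell)$. Reducing that lift modulo $v_1$ gives $b_{p^i}$ up to a unit, and since the Bockstein differentials are precisely the obstructions to such a lift, they must all vanish on $b_{p^i}$. To repair your approach you would need to show, inductively in $r$, that $b_{p^i}$ remains in the image of multiplication by $\lambda_1$ on each $E_r$; this is not automatic and would require essentially tracking the differentials on the $a$-classes, which is what the paper's method avoids.
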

\begin{proof}
Corollary~\ref{cor:xn0primesurvive} shows that the class $\lambda_1\lambda_2\mu_2^{p^i-1}$ can be lifted from a class in $\rTHH_{2p^{n+2}+2p-3}(\ell;H\F_p)$ to a class in $\rTHH_{2p^{n+2}+2p-3}(\ell)$. In particular, the reduction modulo $v_1$ of a choice of lift gives $b_{p^i}$, up to multiplication by a $p$-adic unit. Since the Bockstein differentials are the obstruction to lifting a class, we conclude that these all vanish on $b_{p^i}$.
\end{proof}

Capping allows us to bootstrap from this to a much stronger statement. We will use the following classical result for binomial coefficients repeatedly:

\begin{kummer}\cite{Kummer}
The $p$-adic valuation of $\binom{a+b}{a}$ is the number of carries when adding $a$ and $b$ in base $p$.
\end{kummer}

\begin{lemma}\label{lem:AllbiCycles}
The classes $b_i$ are permanent cycles for all $i$.
\end{lemma}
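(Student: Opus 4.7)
The plan is to bootstrap from Lemma~\ref{lem:biCycles}, which handles the case $n=p^i$, to arbitrary $n$ by capping with suitable elements of $\THC^\ast_S(\ell)$. Concretely, for a given $n \geq 1$, I would pick any $i$ with $p^i > n$ and consider the cap product of $c_{p^i - n}$ with (a lift of) $b_{p^i}$.

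Setting this up, Lemma~\ref{lem:biCycles} provides a lift $\tilde b \in \rTHH_\ast(\ell)$ of $b_{p^i}$, and the discussion in Section~\ref{sec:THCku} provides a lift $\tilde c \in \THC^\ast_S(\ell)$ of $c_{p^i - n}$. By naturality of the cap product along the reduction $\rTHH(\ell) \to \rTHH(\ell; H\Z_{(p)})$, the class $\tilde c \smallfrown \tilde b \in \rTHH_\ast(\ell)$ reduces modulo $v_1$ to $c_{p^i - n} \smallfrown b_{p^i}$, which by Corollary~\ref{cor:ComputingCaps} equals
\[
\binom{p^i - 1}{p^i - n}\, b_n \;=\; \binom{p^i - 1}{n - 1}\, b_n
\]
up to a $p$-local unit. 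So once we know the binomial coefficient on the right is a unit in $\Z_{(p)}$, it follows that $b_n$ itself lifts to $\rTHH_\ast(\ell)$, and is therefore a permanent cycle.

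The remaining step is combinatorial: I would invoke Kummer's theorem to show that $\binom{p^i - 1}{n - 1}$ is a $p$-adic unit. The base-$p$ expansion of $p^i - 1$ is $(p-1, p-1, \ldots, p-1)$, so whatever $n - 1 < p^i - 1$ we take, adding $n-1$ and the complementary summand $p^i - n$ produces no carry (each digit pair sums to exactly $p - 1$). Hence the $p$-adic valuation vanishes and the argument closes.

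I do not expect any serious obstacle here: both inputs (existence of the lifts $\tilde b$ and $\tilde c$, and the cap-product formula) are established earlier in the paper. The one point to verify carefully is that naturality of the cap product gives the expected formula at the level of $\rTHH(\ell)$ rather than only at the level of $\rTHH(\ell; H\Z_{(p)})$, which follows immediately from the construction of the pairing in Section~\ref{sec:THCku}.
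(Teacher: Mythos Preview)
Your proposal is correct and follows essentially the same route as the paper: cap $b_{p^j}$ with $c_{p^j-n}$ and invoke Kummer's theorem to see that the resulting binomial coefficient is a $p$-adic unit. The only cosmetic difference is that the paper phrases the argument at the level of the spectral sequence (using that the cap product commutes with the Bockstein differentials, so $d_m(b_n)\doteq c_{p^j-n}\smallfrown d_m(b_{p^j})=0$), whereas you construct the lift in $\rTHH_\ast(\ell)$ directly and then reduce; these are equivalent since vanishing of all Bockstein differentials is precisely the obstruction to lifting.
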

\begin{proof}
Pick $j$ such that $p^j\geq i$, and let $k=p^j-i$. If we consider the $p$-adic expansion of $p^j-1$, then Kummer's Theorem shows that the binomial coefficient in
\[
c_k\smallfrown b_{p^j}=\binom{p^j-1}{k}b_{i}
\]
is a $p$-adic unit. Naturality of the cup product then ensures that for all $m$,
\[
d_m(b_i)\doteq d_m(c_k\smallfrown b_{p^j})=c_k\smallfrown d_m(b_{p^j})=0.\qedhere
\]
\end{proof}

\subsection{The differentials}
The second part of Lemma~\ref{lem:zerocyclic} shows that there are no classes in degree $2p^{n+2}-2$. However, there are a great many classes in the spectral sequence there. All of these classes must be killed, and there is only one pattern of differentials that achieves this.

\begin{theorem}\label{thm:B4diff}
The differentials in Spectral Sequence (\ref{B4}) are determined by
\[
d_{p^n+\ldots+p}(p^{n-1} a_{kp^{n-1}}) \doteq (k-1) v_1^{p^n+\ldots+p} b_{(k-1)p^{n-1}} \]
for all $n \geq 1$ and $k \geq 1$.
\end{theorem}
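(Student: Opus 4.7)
My plan is to determine these differentials by induction on $n$, combining three ingredients from the preceding sections: the vanishing $\rTHH_{2p^{n+2}-2}(\ell) = 0$ from Lemma~\ref{lem:zerocyclic}(b), the fact that every $b_i$ is a permanent cycle (Lemma~\ref{lem:AllbiCycles}), and naturality of the cap product action of the classes $c_k \in \THC^*_S(\ell)$ from Section~\ref{sec:THCku}. At each stage the logic is: the vanishing forces certain even-degree classes $v_1^m b_i$ on the $E_\infty$-page to die; they cannot be sources, since the $b_i$ are permanent cycles, so they must be hit by differentials; and a bidegree count pins down the possible sources essentially uniquely. Cap-product naturality then propagates these base differentials to all $k$.

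For the $n = 1$ base case, I would consider total degree $2p^3 - 2$. A direct arithmetic count using $|v_1^m b_i| = 2(p-1)(m+1) + 2p^2 i$ together with the divisibility condition $(p-1) \mid i$ shows that $v_1^p b_{p-1}$ is the unique even class on the $E_1$-page in this degree, and that the only candidate source at $v_1$-filtration $0$ in total degree $2p^3 - 1$ is $a_p$. This forces $d_p(a_p) \doteq (p-1) v_1^p b_{p-1}$. To propagate to arbitrary $k$, write $d_p(a_k) = f(k) v_1^p b_{k-1}$ and cap both sides with $c_j$; using Corollary~\ref{cor:ComputingCaps} and naturality of cap products with respect to differentials, this produces the functional equation
\[
f(k) \binom{k-2}{j} \doteq \binom{k-1}{j} f(k-j).
\]
Specializing $j = k-2$ gives $f(k) \doteq (k-1) f(2)$, and the normalization $k = p$ forces $f(2) \doteq 1$, so $f(k) \doteq k-1$ as required.

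For the inductive step $n \geq 2$, the first task is to verify that $p^{n-1} a_{k p^{n-1}}$ survives to the $E_{p^n+\ldots+p}$-page. By linearity and the inductive hypothesis applied to $p a_{k p^{n-1}} = p^{1} a_{(k p^{n-j}) p^{j-1}}$, one gets
\[
d_{p^j + \ldots + p}(p^{n-1} a_{kp^{n-1}}) \doteq p^{n-j}(kp^{n-j} - 1)\, v_1^{p^j + \ldots + p}\, b_{(kp^{n-j}-1) p^{j-1}},
\]
and one checks that on $E_{p^j + \ldots + p}$ the $p$-order of each target has already been reduced by earlier differentials (applied to the appropriate $a_i$-classes) by enough factors of $p$ that the coefficient $p^{n-j}(kp^{n-j} - 1)$ annihilates it. Then Lemma~\ref{lem:zerocyclic}(b) in degree $2p^{n+2}-2$ forces a base differential of the claimed shape (again by counting even classes at that degree), and the cap-product functional equation from above propagates the coefficient to all $k$.

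The principal obstacle is the bookkeeping of $p$-adic orders on intermediate pages: for the claimed differential to make sense, the target $v_1^{p^n+\ldots+p} b_{(k-1)p^{n-1}}$ must have order exactly $p$ on the page $E_{p^n+\ldots+p}$, whereas on $E_1$ it has higher order $p^{\nu_p((k-1)p^{n-1})+1}$. Verifying this reduction requires tracing which prior differentials hit which $p$-multiples of the relevant $v_1^m b_i$-classes; Kummer's theorem controls the binomial coefficients that arise in the cap-product identities, playing a role analogous to its use in Lemma~\ref{lem:AllbiCycles}, and the degree arithmetic closely parallels that of Lemmata~\ref{lem:zerocyclic}, \ref{lem:bpiCycles}, and \ref{lem:AllbiCycles}.
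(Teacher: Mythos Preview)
Your overall strategy---induction on $n$, degree count in $2p^{n+2}-2$ using Lemma~\ref{lem:zerocyclic}(b), permanence of the $b_i$, and cap-product naturality to propagate to general $k$---is exactly the approach the paper takes, and the base case $n=1$ is handled identically. Your functional-equation packaging of the cap product is a cosmetic variant of the paper's direct caps from $a_{jp^n}$ and $a_{p^n}$.

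The one substantive difference is how you handle the inductive step. You try to verify directly that $p^{n-1}a_{kp^{n-1}}$ survives to the $E_{p^n+\ldots+p}$-page by computing its earlier differentials and checking the targets have been sufficiently $p$-reduced; you correctly flag this as the ``principal obstacle'' and leave it as bookkeeping. The paper avoids this bookkeeping entirely by arguing from the target side: in degree $2p^{n+2}-2$ the inductive hypothesis kills all $v_1^k b_i$ with $\nu_p(i)<n-1$, leaving only $v_1^{m}b_{(p-1)p^{n-1}}$; this is a permanent cycle, so it must be a boundary; and in degree $2p^{n+2}-1$ the only classes are $p^{n-1}a_{p^n}$ and $v_1$-multiples of $p^f a_{p^f}$ for $f<n$, the latter being permanent cycles by induction (no class lies just below them). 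This forces the differential on $p^{n-1}a_{p^n}$ without ever verifying its survival directly. You would save yourself the bookkeeping you anticipate by adopting this ``identify the source by elimination'' maneuver in place of your direct survival check.
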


\begin{proof}
We prove this in three steps. By capping with judiciously chosen classes, we first show that if the differential is as claimed for $k=p$, then it is so for $k\equiv 0$ modulo $p$. Capping down from multiples of $p$ allows us to conclude the differentials for all $k$. We then use induction on $n$ to show the differentials on $p^{n-1}a_{p^n}$. For ease of readability, let $m=p^n+\ldots+p$.

We assume that
\[
d_{m}(p^{n-1}a_{p^n})\doteq v_1^m b_{(p-1)p^{n-1}},
\]
and we will first show that
\[
d_m(p^{n-1} a_{jp^n}) \doteq v_1^m b_{(jp-1)p^{n-1}}
\]
for all $j \geq 1$. Naturality of the cap product with respect to the Bockstein differentials shows that
\[
c_{(j-1)p^n} \smallfrown d_m(p^{n-1} a_{jp^n}) = d_m(c_{(j-1)p^n} \smallfrown p^{n-1} a_{jp^n}).
\]
We know that
\[
c_{(j-1)p^n} \smallfrown p^{n-1} a_{jp^n} = \binom{jp^n-1}{(j-1)p^n}p^{n-1} a_p^n,
\]
and by Kummer's Theorem this binomial coefficient is a $p$-adic unit. Hence
\[
d_m(p^{n-1} a_{jp^n}) \doteq v_1^m b_{(jp-1)p^{n-1}}.
\]
This shows that we have the required differential on $p^{n-1} a_{kp^{n-1}}$ whenever $k\equiv 0$ modulo $p$.

Now suppose $k \not\equiv 0$ modulo $p$. Write
\[
k = jp-k'
\]
where $1 \leq k' \leq p-1$. Then
\[
c_{k'p^{n-1}} \smallfrown a_{jp^n} = \binom{jp^n-1}{k'p^{n-1}} a_{kp^{n-1}}.
\]
This binomial coefficient is a $p$-adic unit, so it follows that
\begin{multline*}
 d_m(p^{n-1} a_{kp^n}) \doteq d_m(c_{k' p^{n-1}} \smallfrown p^{n-1} a_{jp^n}) = c_{k'p^{n-1}} \smallfrown d_m(p^{n-1} a_{jp^n}) \\
\doteq c_{k'p^{n-1}} \smallfrown v_1^m b_{(jp-1)p^{n-1}} = v_1^m
\binom{(jp-1)p^{n-1} -1}{k'p^{n-1}} b_{(k-1)p^{n-1}}.
\end{multline*}
By Kummer's Theorem once more we find that the $p$-adic valuation of $\binom{(jp-1)p^{n-1}-1}{k'p^{n-1}}$ is $\nu_p(k-1)$, so the differentials on $p^{n-1} a_{kp^n}$ for all $k$ follow once we know
\[
d_{m}(p^{n-1}a_{p^n})\doteq v_1^m b_{(p-1)p^{n-1}}.
\]

We prove this by induction on $n$. The case $n=1$ is clear, since there is visibly only one generator in dimension $2p^3-2$, namely $v_1^p b_{p-1}$, so assume that the result is true for all $t<n$. Thus if $\nu_p(r)<n-1$, then the $v_1$-tower of $b_{r}$ is truncated, and therefore $b_r$ will play no further role in the computation.

We first identify all classes in degree $|p^{n-1} a_{p^n}|-1=2p^{n+2}-2$ in the $E_1$-term. The only even generators are the classes $b_i$, so we are looking for those values of $k$ and $i$ such that
\[
|v_1^kb_i|=2(p-1)k+2p^2i+2p-2=2p^{n+2}-2.
\]
Reducing modulo $p-1$ or $p$ shows that $i=(p-1)\hat{\imath}$ and $k=p\hat{k}$ for some integers $\hat{k}$ and $\hat{\imath}$. In other words, we are looking for pairs of positive integers $(\hat{k},\hat{\imath})$ such that
\[
\hat{k}+p\hat{\imath}=p^n+\dots+1.
\]
This breaks the problem into two cases: either $\nu_p(\hat{\imath})=n-1$ or $\nu_p(\hat{\imath})<n-1$. The former corresponds to the unique case $b_{(p-1)p^{n-1}}$ and is the only value we want to show remains by $E_{p^n+\dots+p}$. We want to rule out the latter cases, so assume that $s=\nu_p(\hat{\imath})<n-1$. The inductive hypothesis tells us that $v_1^{p^{s+1}+\dots+p}b_i=0$ on $E_{p^{n}+\dots+p}$. However, we also know that
\[
\hat{k}=p^n+\dots+1-p\hat{\imath}\geq p^s+\dots+1
\]
by the above analysis, so $v_1^kb_i=v_1^{p\hat{k}}b_i=0$, as required.

We therefore conclude that there is only one class remaining in degree $2p^{n+2}-2$: $v_1^m b_{(p-1)p^{n-1}}$. Since $b_{(p-1)p^{n-1}}$ is a cycle, this must be the target of a differential. In degree $2p^{n+2}-1$, there are the various $v_1$-multiples of $p^fa_{p^f}$ for $f<n$ and $p^{n-1}a_{p^{n}}$. By induction, the classes $p^fa_{p^f}$ are permanent cycles (since there are no classes in the degree immediately preceding theirs), and thus we must have the required differential on $p^{n-1}a_{p^n}$.
\end{proof}

An easy corollary of this is that there are gaps in the even dimensional
homotopy of $\rTHH(\ell)$.

\begin{lemma}\label{lem:homotopygaps}
For all $n\geq 0$ and $2\leq k\leq p$, the even dimensional homotopy groups of $\rTHH(\ell)$ are $0$ between degrees $2kp^{n+2}-2p+1$ and $2kp^{n+2}+2p-3$.
\end{lemma}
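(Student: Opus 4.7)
The plan is to deduce Lemma~\ref{lem:homotopygaps} from Theorem~\ref{thm:B4diff} via a short degree-counting squeeze on the $v_1$-Bockstein $E_\infty$-page. I would first observe that the only even-dimensional classes at $E_1$ are $p$- and $v_1$-multiples of the $b_i$, since $\lambda_1$, $a_i$, and all their $v_1$-powers are odd-dimensional. Combining the differentials of Theorem~\ref{thm:B4diff} with the Leibniz rule and the permanent-cycle property from Lemma~\ref{lem:AllbiCycles}, the only even classes that can contribute nontrivially to $E_\infty$ are the $v_1^f b_i$ and their $p$-multiples with $0 \leq f < \ell(i) := p + p^2 + \dots + p^{s+1}$, where $s := \nu_p(i)$. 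Writing $i = p^s u$ with $p \nmid u$, the truncating differential supplied by Theorem~\ref{thm:B4diff} is
\[
d_{\ell(i)}\bigl(p^s a_{(u+1) p^s}\bigr) \doteq u \, v_1^{\ell(i)} b_i,
\]
and Leibniz together with the shorter differentials kills every remaining class at filtration $\geq \ell(i)$.

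The heart of the argument is a squeeze on the degree equation. Writing $|v_1^f b_i| = 2p^{s+2} u + 2(p-1)(f+1)$ and supposing this equals $2 k p^{n+2} + 2t$ for some $2 \leq k \leq p$, $n \geq 0$, and $t \in \{-(p-1), \dots, p-2\}$, the survival bounds $0 \leq f < \ell(i)$ give $p-1 \leq (p-1)(f+1) \leq p^{s+2} - p$. Combined with $|t| \leq p-1$, the equation $p^{s+2} u + (p-1)(f+1) = k p^{n+2} + t$ then forces
\[
p^{s+2} u \;<\; k p^{n+2} \;<\; p^{s+2}(u + 1).
\]
If $n \geq s$, the positive integer $k p^{n-s}$ lies strictly between $u$ and $u+1$, which is impossible. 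If $n < s$, then $k p^{n+2}/p^{s+2} = k/p^{s-n} \leq p/p = 1$ since $k \leq p$ and $s - n \geq 1$, so $u < 1$, contradicting $u \geq 1$. Either way, no surviving $v_1^f b_i$ lies in the stated range.

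The main obstacle is purely organizational: correctly identifying the surviving $E_\infty$-generators in even degrees and tracking the constraints $p \nmid u$, $k \leq p$, $f < \ell(i)$, and $|t| \leq p-1$ uniformly. Once these ingredients are in place, the squeeze is immediate, justifying the paper's description of the result as an easy corollary of Theorem~\ref{thm:B4diff}.
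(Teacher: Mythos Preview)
Your proposal is correct and follows essentially the same approach as the paper: both identify the surviving even $E_\infty$-classes as $v_1^f b_i$ with $0\le f<\ell(i)$ (a consequence of Theorem~\ref{thm:B4diff} and $v_1$-linearity of the Bockstein differentials) and then do a degree count to rule out any such class in the stated range. The only cosmetic difference is that the paper phrases the degree count by checking only the extremal indices $b_{kp^n-p^m}$ for $0\le m\le n$ and observing that the top of each of their $v_1$-towers lands exactly in degree $2kp^{n+2}-2p$, whereas you carry out an equivalent direct squeeze on the inequality $p^{s+2}u<kp^{n+2}<p^{s+2}(u+1)$. One small remark: your invocation of a ``Leibniz rule'' is unnecessary here---$v_1$-linearity of the Bockstein differentials is all that is needed to conclude that $v_1^{\ell(i)}b_i=0$ forces $v_1^{\ell(i)+j}b_i=0$ on all later pages.
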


\begin{proof}
On the $E_\infty$-page, the classes $b_j$ support a $v_1$ tower of length $p^{i}+\dots+p$, where $i=\nu_p(j)+1$. Thus if there are classes in the desired range, then they originate as $v_1$ multiples of classes $b_{kp^n-m}$ for some $m$. The $p$-adic valuation of the subscript is determined by that of $m$, and it is clear that we need only check the largest integers $kp^n-m$ for any given $p$-adic valuation, namely $kp^n-p^m$ for $0 \leq m \leq n$. However, the top $v_1$ multiples of each of these classes lie in the same dimension: $2kp^{n+2}-2p$, proving the result.
\end{proof}

We also note that the proof of Theorem~\ref{thm:B4diff} shows that on the $E_\infty$-page, the $\Z_{(p)}[v_1]$-module generated by $pb_{p^k}$ is isomorphic to the one generated by $b_{p^{k-1}}$.

\subsection{The Torsion Free Extensions}
We adopt the notation in this section that $v_0x$ is the image of multiplication by $p$ on the $E_1$-page of the spectral sequence. We begin with the torsion free part, proving a restatement of Theorem~\ref{thm:v1Divisibility}.

\begin{thm}
We have additive extensions
\[
p\cdot a_1\doteq v_1^p\lambda_1
\]
and for $k\geq 1$,
\[
p\cdot v_0^ka_{p^k}\doteq v_1^{p^{k+1}}v_0^{k-1}a_{p^{k-1}}.
\]
\end{thm}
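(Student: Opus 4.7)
The approach is to pin down all the stated extensions by a single argument applied in total degree $2p^{k+2}-1$, combining an enumeration of the surviving $E_\infty$-classes in that degree with the rigidity forced by Lemma~\ref{lem:ExtensionLemma}, which asserts $\rTHH_{2p^{k+2}-1}(\ell) \cong \Z_{(p)}$.

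First, I will enumerate the surviving classes on $E_\infty$ of Spectral Sequence~(\ref{B4}) in total degree $2p^{k+2}-1$. Theorem~\ref{thm:B4diff} shows that $v_0^s a_{p^s}$ is a permanent cycle for every $s \geq 0$ (the relevant differential coefficient vanishes), while every $v_0^l a_i$ with $i$ not a power of $p$, or with $l < \nu_p(i)$, is killed by a differential. Since all the listed differentials originate from $a$-classes and land on $b$-classes, no permanent $a$-class cycle is ever hit, and likewise for the $\lambda_1$-tower. A short degree count then shows that the only $a$-family survivors in degree $2p^{k+2}-1$ are $v_1^{p^{k+1}+\dots+p^{s+2}} v_0^s a_{p^s}$ for $0 \leq s \leq k$, each contributing $\Z/p$ in Bockstein filtration $p^{k+1}+\dots+p^{s+2}$ (empty sum when $s = k$), along with the single class $v_1^{p^{k+1}+\dots+p}\lambda_1$ contributing $\Z_{(p)}$ in filtration $p^{k+1}+\dots+p$ from the permanent $\lambda_1$-tower.

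Second, Lemma~\ref{lem:ExtensionLemma} forces these $k+1$ copies of $\Z/p$, capped by the $\Z_{(p)}$ from the $\lambda_1$-tower, to assemble into a single $\Z_{(p)}$. The only filtration on $\Z_{(p)}$ with this list of successive quotients is the $p$-adic filtration, so
\[
\mathrm{fil}^{p^{k+1}+\dots+p^{s+2}} \rTHH_{2p^{k+2}-1}(\ell) = p^{k-s}\Z_{(p)} \quad(0 \leq s \leq k),
\]
and $\mathrm{fil}^{p^{k+1}+\dots+p} = p^{k+1}\Z_{(p)}$. Each representative $v_1^{p^{k+1}+\dots+p^{s+2}} v_0^s a_{p^s}$ is thus a generator of $p^{k-s}\Z_{(p)}$, and multiplication by $p$ sends it to a $p$-local unit multiple of the generator of the next filtration piece. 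Specializing to $s=k$ yields $p \cdot v_0^k a_{p^k} \doteq v_1^{p^{k+1}} v_0^{k-1} a_{p^{k-1}}$, and the instance $k=0$ of the same argument (in which the only intermediate filtration pieces are $a_1$ and $v_1^p\lambda_1$) yields the base case $p \cdot a_1 \doteq v_1^p \lambda_1$.

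The principal obstacle is the enumeration step, namely confirming that the many classes $v_1^j v_0^l a_i$ with $i$ not a power of $p$ all die under the differentials of Theorem~\ref{thm:B4diff}. This reduces to the observation that once $v_0^l a_i$ itself fails to be a permanent cycle, its $v_1$-multiples cannot be permanent cycles either, since differentials commute with $v_1$-multiplication. Once the enumeration is established, the $p$-extensions follow by pure algebra from the constraint $\rTHH_{2p^{k+2}-1}(\ell) \cong \Z_{(p)}$.
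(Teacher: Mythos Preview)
Your approach is essentially identical to the paper's: both arguments work in total degree $2p^{k+2}-1$, invoke Lemma~\ref{lem:ExtensionLemma} to force the abutment to be $\Z_{(p)}$, list the surviving $E_\infty$-classes $v_1^{p^{k+1}+\dots+p^{s+2}} v_0^s a_{p^s}$ together with $v_1^{p^{k+1}+\dots+p}\lambda_1$, and then read off the hidden $p$-extensions from the fact that a filtration on $\Z_{(p)}$ with these successive quotients must be the $p$-adic one.

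One remark on your enumeration step: the justification you give at the end (``once $v_0^l a_i$ itself fails to be a permanent cycle, its $v_1$-multiples cannot be permanent cycles either, since differentials commute with $v_1$-multiplication'') is not airtight as stated, because a $v_1$-multiple could in principle survive if the $v_1$-multiple of the differential's target had already been killed on an earlier page. You do not actually need to rule this out directly. It suffices to observe that the classes you list are genuinely nonzero on $E_\infty$ (they are permanent cycles and, being in odd degree, cannot be hit by differentials from the even-degree $b$-classes); since the abutment is known to be $\Z_{(p)}$, the associated graded cannot contain anything beyond your list. This is exactly the logic the paper uses, and it makes the ``principal obstacle'' you flag disappear.
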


\begin{proof}
We saw in Lemma~\ref{lem:ExtensionLemma} that
\[
\rTHH_{2p^{i+2}-1}(\ell)=\Z_{(p)}.
\]
However, since the elements $a_{p^j}$ are $p^{j+1}$-torsion, and the differentials above only involve multiples of $a_{p^j}$ up to $v_0^{j-1}a_{p^j}$, in the $2p^{j+2}-1$ stem, there are elements $v_0^ia_{p^i}$ and $v_1^{p^{i+1}+\dots+p^{k+2}}v_0^ka_{p^k}$ for $0\leq k<i$ in this degree. Since the group must be cyclic, we have non-trivial additive extensions, and by the structure of Bockstein spectral sequences, we must have
\[
p\cdot(v_1^{p^{i+1}+\dots+p^{k+2}}v_0^ka_{p^k})\doteq v_1^{p^{i+1}+\dots+p^{k+1}}v_0^{k-1}a_{p^{k-1}}.
\]
Comparing powers of $v_1$ provides the desired result.
\end{proof}

\subsection{The Torsion Extensions}
That there are extensions between the torsion patterns is clear: the basic differentials all arise on $p$-multiples of the classes $a_{p^n}$. By naturality, the targets of these differentials must be linked by similar multiplications by $p$, and this essentially gives Theorem~\ref{thm:LastTorsionExt}.

Carefully proving the extensions in the torsion patterns is much harder. We begin by isolating repeating patterns in the torsion. For $n\geq 0$ and $1\leq k\leq p-1$, let $T_{n,k}$ be the submodule of $\rTHH_\ast(\ell)$ generated by all classes $b_i$, $kp^n\leq i\leq (k+1)p^n-1$, with degrees shifted down so that the lowest class is in degree 0. Lemma~\ref{lem:homotopygaps} shows that the torsion of $\rTHH_\ast(\ell)$ is the direct sum of shifts of these modules.

\begin{thm}\label{thm:IsomPieces}
The submodule $T_{n,k}$ is independent of $k$, $0\leq k\leq p-1$.
\end{thm}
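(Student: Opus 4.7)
The strategy is to use the cap product action of the divided-power classes $c_m \in \THC^*_S(\ell)$ on $\rTHH_*(\ell)$ to construct explicit $\ell_*$-module isomorphisms between the various $T_{n,k}$. Since the cap product pairing is $\ell_*$-linear (because $v_1$ acts centrally), any such map automatically respects the $\ell_*$-module structure and sends $v_1$-towers to $v_1$-towers; the work is to verify, via Kummer's Theorem, that capping with the right class takes each generator to a unit multiple of a generator.

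Fix $1 \leq k < k' \leq p-1$ and consider the $\ell_*$-linear map
\[
\phi \colon T_{n,k'} \longrightarrow T_{n,k}, \qquad \phi(x) = c_{(k'-k)p^n} \smallfrown x.
\]
The degree of $c_{(k'-k)p^n}$ is $2(k'-k)p^{n+2}$, which exactly matches the difference in the ``shift down'' used to define the two submodules. By Corollary~\ref{cor:ComputingCaps}, on each generator $b_i$ with $k'p^n \leq i \leq (k'+1)p^n - 1$,
\[
\phi(b_i) \doteq \binom{i-1}{(k'-k)p^n}\, b_{i - (k'-k)p^n},
\]
where the new indices $i - (k'-k)p^n$ sweep out precisely the generating range $[kp^n,(k+1)p^n-1]$ of $T_{n,k}$.

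The key calculation is that the binomial coefficient above is a $p$-adic unit. Write $i = k'p^n + j$ with $0 \leq j \leq p^n - 1$ and apply Kummer's Theorem to the sum $(k'-k)p^n + \bigl(i - 1 - (k'-k)p^n\bigr) = i-1$. When $j \geq 1$ we have $i - 1 - (k'-k)p^n = kp^n + (j-1)$; the digit at position $n$ sums to $(k'-k) + k = k' < p$ and all digits below position $n$ of $(k'-k)p^n$ are zero, so there are no carries. When $j = 0$ we have $i - 1 - (k'-k)p^n = (k-1)p^n + (p^n - 1)$; the digit at position $n$ sums to $(k'-k) + (k-1) = k' - 1 < p$, and in positions $0,\ldots,n-1$ the addend $(k'-k)p^n$ again contributes zeros. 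In both cases the binomial coefficient is a unit, so $\phi$ is surjective and bijective on generators.

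Finally, by the remark after Theorem~\ref{thm:B4diff} together with Lemma~\ref{lem:homotopygaps}, the $v_1$-tower length on $b_{kp^n+j}$ depends only on $\nu_p(j)$ for $j \geq 1$, and equals $p^{n+1}+\cdots+p$ for $j=0$ -- in either case independent of $k \in \{1,\ldots,p-1\}$. Since $\phi$ intertwines $v_1$-multiplication, it identifies these towers term-by-term, hence is a bijection in each degree and therefore an isomorphism of $\ell_*$-modules. By transitivity, all $T_{n,k}$ for $1 \leq k \leq p-1$ are mutually isomorphic. The main obstacle is the $j=0$ case of the Kummer bookkeeping, where writing $k'p^n - 1$ in base $p$ forces borrows all the way down from position $n$; the no-carry check at position $n$ genuinely uses $k \geq 1$, which explains why the argument is confined to the range in which the $T_{n,k}$ are defined.
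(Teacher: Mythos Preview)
Your proof is correct and follows essentially the same capping argument as the paper: the paper caps down from $T_{n,p-1}$ using $c_{jp^n}$ for $1\leq j\leq p-2$, while you cap from an arbitrary $T_{n,k'}$ to $T_{n,k}$ with $k<k'$, but the Kummer check is the same computation. Your version is somewhat more explicit about verifying bijectivity via the matching of $v_1$-tower lengths, whereas the paper simply asserts ``hence an isomorphism onto''; this extra care is appropriate and does not change the underlying idea.
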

\begin{proof}
This is another capping argument. We will cap down from the case $k=p-1$. To get the lower torsion submodules, we will cap with classes $c_{jp^n}$, $1\leq j\leq p-2$. The generators of the examined submodule are those $b_i$ with $(p-1)p^n\leq i\leq p^{n+1}-1$. When we cap with $c_{jp^n}$, we get
\[
c_{jp^n}\smallfrown b_i=\binom{i-1}{jp^n}b_{i-jp^n}.
\]
However, for all $i$ in the desired range, the $p$-adic expansion of $i-1$ begins with at most $(p-2)p^n$, this binomial coefficient is a $p$-adic unit and hence an isomorphism onto.
\end{proof}

\begin{cor}
The torsion submodule of $\THH_\ast(\ell)$ splits as a direct sum
\[
\bigoplus_{n\geq 0}\bigoplus_{k=1}^{p-1}\Sigma^{2kp^{n+2}+2(p-1)}T_{n,k}
\]
\end{cor}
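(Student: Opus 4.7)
The plan is to combine Lemma~\ref{lem:homotopygaps} with Theorem~\ref{thm:IsomPieces}, using Theorem~\ref{thm:B4diff} and Lemma~\ref{lem:AllbiCycles} to describe the torsion generators. By Lemma~\ref{lem:AllbiCycles} each $b_i$ is a permanent cycle, and Theorem~\ref{thm:B4diff} together with the extensions analysis of the preceding subsection expresses every torsion class of $\rTHH_\ast(\ell)$ in terms of the $b_i$ and their $v_1$- and $p$-multiples. First I would partition the index set $\{i\geq 1\}$ into blocks $B_{n,k}=\{i:kp^n\leq i\leq (k+1)p^n-1\}$, one for each pair $(n,k)$ with $n\geq 0$ and $1\leq k\leq p-1$. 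By construction, $T_{n,k}$ is the submodule generated by $\{b_i:i\in B_{n,k}\}$, shifted so that its lowest generator sits in degree $0$.

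Next I would locate the suspension shift: the lowest generator of $T_{n,k}$ is $b_{kp^n}$, whose degree in $\rTHH_\ast(\ell)$ is
\[
|b_{kp^n}|=2p^2\cdot kp^n+2(p-1)=2kp^{n+2}+2(p-1),
\]
which exactly matches the suspension in the stated decomposition. The step I expect to be the main obstacle is the verification that these blocks are genuinely direct summands, that is, that there are no hidden additive extensions that mix classes from distinct blocks. For this I would invoke Lemma~\ref{lem:homotopygaps}: between the top of block $(n,k)$ and the bottom of block $(n,k+1)$ (or $(n+1,1)$ when $k=p-1$), the even-degree homotopy groups of $\rTHH(\ell)$ vanish. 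Since the torsion is concentrated in even degrees, any extension among torsion classes must land in even degrees, so the vanishing interval forces extensions to stay within a single block. A short degree computation confirms that the top of the $v_1$-tower on $b_i$ with $i\in B_{n,k}$ does not escape past degree $2(k+1)p^{n+2}-2p$, so the bookkeeping is consistent.

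Finally, Theorem~\ref{thm:IsomPieces} identifies $T_{n,k}$ for fixed $n$ with a common $\ell_\ast$-module, namely the $T_n$ appearing in Theorem~\ref{thm:Torsion}, independent of $k$. Summing over $n\geq 0$ and $k=1,\dots,p-1$ yields the claimed direct sum decomposition of the torsion submodule of $\THH_\ast(\ell)$.
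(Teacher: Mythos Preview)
Your proposal is correct and follows the same approach as the paper. The paper treats this corollary as essentially immediate from the definition of $T_{n,k}$ and Lemma~\ref{lem:homotopygaps} (it remarks just before Theorem~\ref{thm:IsomPieces} that the gaps of Lemma~\ref{lem:homotopygaps} already force the torsion to split as a direct sum of shifts of the $T_{n,k}$), and you have simply spelled out the details of that argument. One small remark: your final paragraph invoking Theorem~\ref{thm:IsomPieces} is unnecessary here, since the corollary is stated in terms of the $T_{n,k}$ themselves, not the single module $T_n$; that identification is used only later in the proof of Theorem~\ref{thm:Torsion}.
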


It remains only to determine the structure of $T_{n,k}$ for some $k$. We will identify some extensions in $T_{n,p-1}$ and use these to determine $T_{n,1}$ completely.

We now exploit the first part of Lemma~\ref{lem:zerocyclic}: $\rTHH_{2p^{n+3}-2p}(\ell)$ is a cyclic group. On the $E_\infty$-page of the spectral sequence, there are only the elements
\[
v_1^{(p^{k+1}+\dots+p)-1}b_{p^{n+1}-p^{k}}, \quad 0\leq k\leq n,
\]
since the other $v_1$ multiples of the intermediate elements $b_j$ are all killed by differentials. This means that there must be extensions linking these elements.

\begin{thm}\label{thm:LastTorsionExt}
There are choices of lifts of the generators in this spectral sequence so that we have hidden additive extensions
\[
p\cdot b_{p^{n+1}-p^k}=v_0b_{p^{n+1}-p^k}+v_1^{p^{k+2}}b_{p^{n+1}-p^{k+1}},
\]
where $v_0b_{p^{n+1}-p^k}$ is the image of $p$ times $b_{p^{n+1}-p^k}$ on the $E_1$-page, and where $0\leq k\leq n$.
\end{thm}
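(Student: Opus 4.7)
The plan is to combine the cyclicity of $\rTHH_{2p^{n+3}-2p}(\ell)$ from Lemma~\ref{lem:zerocyclic}(a) with the enumeration of $E_\infty$-classes in that degree---the $n+1$ tower tops $v_1^{p^{k+1}+\cdots+p-1}b_{p^{n+1}-p^k}$ for $0\le k\le n$ listed immediately before the theorem---to force the shape of the hidden extensions at the tops of the $v_1$-towers, then descend those extensions to the base via a degree count, and finally normalize by choosing lifts.

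The first step is the top-of-tower statement. Multiplication by $p$ preserves $v_1$-filtration on $E_1$, but since each tower top is annihilated by $v_1$ on $E_\infty$, the associated-graded image of $p$ acting on a tower top must either vanish or land in strictly higher $v_1$-filtration. Cyclicity then forces a unique pattern: the $n+1$ tower tops chain into a cyclic group of order $p^{n+1}$, with the $(k+1)$-st top being a $p$-adic unit multiple of $p$ times the $k$-th top. This yields
\[
p\cdot v_1^{p^{k+1}+\cdots+p-1}b_{p^{n+1}-p^k}\doteq v_1^{p^{k+2}+\cdots+p-1}b_{p^{n+1}-p^{k+1}}
\]
for $0\le k<n$, with $p$ killing the $k=n$ tower top on the nose.

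Next, I descend the extension to $b_{p^{n+1}-p^k}$ itself. Writing a lift of $p\cdot b_{p^{n+1}-p^k}$ in the form $v_0 b_{p^{n+1}-p^k}+\alpha$ with $\alpha$ of strictly positive $v_1$-filtration, a degree comparison using $|b_i|=2p^2i+2(p-1)$ and $|v_1|=2(p-1)$ shows that the possible summands of $\alpha$ have the form $v_1^{p^2 s}b_{p^{n+1}-p^k-(p-1)s}$ for $s\ge 1$. Multiplying through by $v_1^{p^{k+1}+\cdots+p-1}$ and matching against the top-of-tower statement pins down the leading (minimal $v_1$-filtration) summand of $\alpha$ as a $p$-adic unit multiple of $v_1^{p^{k+2}}b_{p^{n+1}-p^{k+1}}$, i.e. the term with $s=p^k$.

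The final step is to absorb the leading unit and any subleading $s>p^k$ terms by adjusting lifts, working by downward induction on $k$. At $k=n$ there is nothing to adjust; at each lower $k$, I rescale the lift of $b_{p^{n+1}-p^{k+1}}$ to turn the unit into $1$, and correct the lift of $b_{p^{n+1}-p^k}$ by suitable higher-filtration classes to cancel any residual $s>p^k$ contributions. Because each such correction lives strictly higher in $v_1$-filtration than the term it cancels, the process terminates and does not disturb earlier adjustments. The main obstacle will be the filtration bookkeeping in the descent step: one must verify that multiplication by $v_1^{p^{k+1}+\cdots+p-1}$ genuinely isolates the $s=p^k$ summand after passage to $E_\infty$, using the $v_1$-tower lengths of the candidate classes $b_{p^{n+1}-p^k-(p-1)s}$ from Theorem~\ref{thm:B4diff} together with the permanent-cycle information of Lemma~\ref{lem:AllbiCycles}, so that Step~1 really sees the desired summand and not something spurious.
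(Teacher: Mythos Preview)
Your approach shares the paper's key input---cyclicity of $\rTHH_{2p^{n+3}-2p}(\ell)$ forces the $n+1$ tower tops into a $p$-chain---but the paper's execution from there is more direct and sidesteps the obstacle you flag.

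Rather than descending from tower tops via $v_1$-multiplication (your Step~2) and then correcting lifts (your Step~3), the paper argues by decreasing induction on $k$ entirely in degree $|b_{p^{n+1}-p^k}|$. It first pins down unique lifts of the classes $v_0 b_{p^{n+1}-p^k}$ having exactly the $v_1$-order visible on $E_\infty$. The inductive hypothesis (extensions already established for $k'>k$) then shows that the subgroup of higher-filtration elements in degree $|b_{p^{n+1}-p^k}|$ is cyclic, generated by $v_1^{p^{k+2}}b_{p^{n+1}-p^{k+1}}$. Hence $\alpha := p\cdot b_{p^{n+1}-p^k}-v_0 b_{p^{n+1}-p^k}$ is \emph{exactly} an integer multiple of that generator---no residual $s>p^k$ terms exist to cancel---and the tower-top chain forces the integer to be a unit, absorbed by a single rescaling.

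The obstacle you raise in Step~2 is genuine and not merely bookkeeping: multiplying $\alpha$ by $v_1^{p^{k+1}+\cdots+p-1}$ can annihilate a hypothetical leading term at filtration $<p^{k+2}$, so comparison with Step~1 does not by itself determine the leading filtration of $\alpha$. Resolving this requires showing that $E_\infty$ vanishes in degree $|b_{p^{n+1}-p^k}|$ at filtrations $1,\ldots,p^{k+2}-1$; that is true (for $1\le s<p^k$ one checks $\nu_p\!\big(p^{n+1}-p^k-(p-1)s\big)=\nu_p(s)$, so the relevant $v_1$-towers are too short to reach filtration $p^2 s$), but once it is in hand the paper's inductive framing is already available and your Step~3 becomes superfluous. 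Note also that your Step~3 cancellation of residuals by adjusting the lift of $b_{p^{n+1}-p^k}$ would need those residuals to be $p$-divisible, which again is exactly what the cyclic structure of the higher-filtration subgroup supplies.
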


\begin{proof}
We show the extensions by increasing degree. We first note that the convergence of the Bockstein spectral sequence ensures that we can find lifts of the classes $v_0b_{p^{n+1}-p^k}$ which have $v_1$ order exactly what is seen on the $E_\infty$-page. Since the order of all elements of higher filtration in the same degree is larger than that of $v_0b_{p^{n+1}-p^k}$, this choice is unique. Moreover, this implies that $p^m$ times this lift is a lift of $v_0^{m+1}b_{p^{n+1}-p^k}$ which has the same $v_1$ order as the image in $E_\infty$. We choose this unit so that $p(b_{p^{n+1}-p^k})=v_0b_{p^{n+1}-p^k}$ modulo elements of higher filtration.

Since we must have the extensions at the end of the $v_1$ torsion patterns generated by the $b_{p^{n+1}-p^k}$, there must be extensions linking the generating classes. The argument is now one of decreasing induction on $k$. There is only one class of higher filtration than $b_{p^{n+1}-p^{n-1}}$, namely $v_1^{p^{n+1}}b_{p^{n+1}-p^n}$. We must therefore have an extension
\[
p\cdot b_{p^{n+1}-p^{n-1}}-v_0b_{p^{n+1}-p^{n-1}}\doteq v_1^{p^{n+1}}b_{p^{n+1}-p^n}.
\]
By changing both $b_{p^{n+1}-p^{n-1}}$ and $v_0b_{p^{n+1}-p^{n-1}}$ by the same unit, we can ensure actual equality.

The inductive step is identical. There must be a non-trivial extension from $b_{p^{n+1}-p^{k}}$ to the subgroup generated by elements of higher filtration which realizes this element as the generator of a cyclic group. The subgroup of elements of higher filtration is generated by $v_1^{p^{k+2}}b_{p^{n+1}-p^{k+1}}$, so we must have an extension of the form
\[
p\cdot b_{p^{n+1}-p^k}-v_0b_{p^{n+1}-p^k}\doteq v_1^{p^{k+2}}b_{p^{n+1}-p^{k+1}}.
\]
Simultaneously changing the lifts of $b_{p^{n+1}-p^k}$ and $v_0b_{p^{n+1}-p^k}$ by a unit allows us to produce an equality.
\end{proof}

Combined with Theorem~\ref{thm:IsomPieces}, this gives hidden extensions of the form
\[
p\cdot b_{2p^{n}-p^k} \doteq v_0b_{2p^{n}-p^k}+v_1^{p^{k+2}}b_{2p^{n}-p^{k+1}}.
\]
We will use this to find $T_{n,1}$. After changing the choices of generators by multiplication by units if necessary, we find the following.

\begin{thm}\label{thm:TorsionExtensionsinFull}
There are lifts of the generators $v_0^i b_m$ to $\Sigma^{2p^{n+2}-2(p-1)} T_{n,1}$ for $p^n \leq m \leq 2p^n-1$ as follows. Let $k = \nu_p(m)$ and let $k'=\nu_p(m-(p-1)p^k)$. Then
\[ p \cdot b_m = \begin{cases}
v_0 b_m + v_1^{p^{k+2}} v_0^{k'-k-1} b_{m-(p-1)p^k} & \text{if $k'-k-1 \geq 0$} \\
v_0 b_m & \text{otherwise.}
\end{cases} \]
\end{thm}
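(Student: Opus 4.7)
The plan is to transport the extensions at the ``top'' generators $b_{2p^n-p^k}$ (given by Theorem~\ref{thm:LastTorsionExt} together with the identification $T_{n,p-1}\cong T_{n,1}$ from Theorem~\ref{thm:IsomPieces}) out to every generator $b_m$ of $T_{n,1}$ by applying cap product with the divided power classes $c_l \in \THC^\ast_S(\ell)$. First I would record the base extensions
\[
p\cdot b_{2p^n-p^k} \doteq v_0 b_{2p^n-p^k} + v_1^{p^{k+2}} b_{2p^n-p^{k+1}},\qquad 0\le k\le n-1,
\]
together with $pb_{p^n}\doteq v_0 b_{p^n}$, where $b_{2p^n-p^{k+1}}$ is read as $0$ once $k+1>n$.

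For a given $m\in[p^n,2p^n-1]$ with $k=\nu_p(m)$, I would set $l=2p^n-p^k-m$, which is non-negative since $p^k\mid m$ and $m\le 2p^n-1$. Applying $c_l\smallfrown(-)$ to the base extension at $k$, and using Corollary~\ref{cor:ComputingCaps} together with the naturality of the cap product with respect to multiplication by $p$ and by $v_1$, yields
\[
\binom{2p^n-p^k-1}{l}pb_m \doteq \binom{2p^n-p^k-1}{l}v_0 b_m + \binom{2p^n-p^{k+1}-1}{l}v_1^{p^{k+2}} b_{m-(p-1)p^k}.
\]

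The core of the proof is then a Kummer-theoretic analysis of the two binomial coefficients. A short base-$p$ digit count, using that $m$ has leading digit $1$ at position $n$ and first nonzero digit $a_k$ at position $k$, shows that the sum $l+(m-1)$ produces no carries, so $\binom{2p^n-p^k-1}{l}$ is a $p$-adic unit and can be divided out. The second coefficient I would handle by cases on $a_k$. When $a_k=p-1$, I expect Kummer's theorem to give exactly $k'-k-1$ carries, producing the required factor $v_0^{k'-k-1}$ and, after absorbing remaining unit scalars into the lifts (following the inductive unit-adjustment strategy from the proof of Theorem~\ref{thm:LastTorsionExt}), the ``if'' branch of the formula; in this case $m-(p-1)p^k$ is still in $[p^n,2p^n-1]$, so the hidden term remains inside $T_{n,1}$. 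When $a_k<p-1$, the hidden term should vanish for one of two reasons: either the borrow in the $p$-adic expansion of $m-(p-1)p^k$ stops below position $n$, in which case the $v_1$-order of $b_{m-(p-1)p^k}$, read off from the differential pattern in the proof of Theorem~\ref{thm:B4diff}, is strictly less than $p^{k+2}$; or the borrow cascades up to position $n$, placing $b_{m-(p-1)p^k}$ in a different summand of the splitting corollary following Theorem~\ref{thm:IsomPieces}, so that it contributes nothing within $T_{n,1}$.

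I expect the main obstacle to be the delicate $p$-adic bookkeeping in this last step: pinning the number of carries in the second binomial coefficient to exactly $k'-k-1$ in the ``if'' branch, and verifying the stated vanishing of the hidden term in the ``otherwise'' branch, particularly the upper bound on the $v_1$-order of $b_{m-(p-1)p^k}$ that one has to extract from the differential structure of Theorem~\ref{thm:B4diff}.
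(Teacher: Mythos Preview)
Your proposal is correct and follows essentially the same route as the paper: cap down from $b_{2p^n-p^k}$ with $c_l$ for $l=2p^n-p^k-m$, check that the first binomial coefficient $\binom{2p^n-p^k-1}{l}$ is a unit, and use Kummer's theorem on the second coefficient $\binom{2p^n-p^{k+1}-1}{l}$ to obtain the factor $v_0^{k'-k-1}$ in the ``if'' branch. Your treatment of the ``otherwise'' branch is slightly overcomplicated: when $a_k<p-1$ one always has $k'=k$ (the digit at position $k$ of $m-(p-1)p^k$ is $a_k+1\neq 0$ regardless of how far the borrow cascades), so the $v_1$-order bound $v_1^{p^{k+2}}b_{m-(p-1)p^k}=0$ already suffices and the summand dichotomy is unnecessary.
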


\begin{proof}
We will show this by capping down from $b_{2p^n-p^k}$. We find that
\[ c_{2p^n-p^k-m} \smallfrown b_{2p^n-p^k} = \binom{2p^n-p^k-1}{2p^n-p^k-m} b_m \]
is a $p$-adic unit times $b_m$.

Naturality of the cap product ensures that when we pull back $p \cdot b_{2p^n-p^k}$ via capping with $c_{2p^n-p^k-m}$ we get $p \cdot b_m$. Hence it is enough to determine
\[ c_{2p^n-p^k-m} \smallfrown v_1^{p^{k+2}} b_{2p^n-p^{k+1}} = v_1^{p^{k+2}} \binom{2p^n-p^{k+1}-1}{2p^n-p^k-m} b_{m-(p-1)p^k}.\]
By Kummer's Theorem, we find that the $p$-adic valuation of this binomial coefficient is $k'-k-1$ if $k'-k-1 \geq 0$ and $0$ if $k'=k$. Hence the result follows by noting that if $k'=k$ then $v_1^{p^{k+2}} b_{m-(p-1)p^k} = 0$.
\end{proof}

This result completes our analysis of $T_{n,1}$. We can now provide a dictionary linking $T_{n,1}$ with the module $T_n$ defined in Section~\ref{sec:prelim}. We define a bijection between strings of length at most $n$ and $v_0$ multiples of classes $b_i$, $p^n\leq i\leq 2p^n-1$, via
\[
a_1\dots a_k\underbrace{0\dots 0}_j \longleftrightarrow v_0^j b_{p^n+a_1p^{n-1}+\dots+a_kp^{n-k}}.
\]
The restriction on the lengths of the strings reflects both the fact that we only consider classes in a prescribed range and the fact that the order of a class $b_i$ is $p^{\nu_p(i)+1}$. The previous theorem then shows that all of the relations from Section~\ref{sec:prelim} are satisfied.

\section{Topological Hochschild homology of $ko$}\label{sec:ko}
We can follow the same program as for $\THH(ku)$ to calculate $\THH_*(ko)_{(2)}$. As a starting point, Rognes and the first author \cite{AnRo} used the B\"okstedt spectral sequence to conclude that
\[
\THH_\ast(ko;H\F_2)=E(\lambda_1',\lambda_2) \otimes P(\mu),
\]
where $|\lambda_1'|=5$, $|\lambda_2|=7$, and $|\mu|=8$.  Here the class $\lambda_1'$ is represented by $\sigma \xi_1^4$ \cite[Thm 6.2]{AnRo}.

This serves as the starting point for a chain of spectral sequences, just as before. In this case, however, we have an $\eta$-Bockstein spectral sequence in addition to the four spectral sequences analogous to the $ku$ case.

\begin{prop}
There is a bigraded spectral sequence of algebras
\begin{equation} \label{eq:E_1ko}
E_1=\THH_\ast(ko;ku)[\eta] \Rightarrow \THH_\ast(ko).
\end{equation}
\end{prop}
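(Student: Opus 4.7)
The plan is to construct this as an $\eta$-Bockstein spectral sequence, exactly parallel to the Miller--Novikov spectral sequences (\ref{B1})--(\ref{B4}) of Section~\ref{sec:prelim}, with $\eta \in \pi_1(ko)$ playing the role of $p$ or $v_1$. Two-locally there is a cofiber sequence of $ko$-bimodules
\[
\Sigma ko \xrightarrow{\eta} ko \to ku,
\]
so that $ku \simeq ko/\eta$. Iterating this cofiber as in the construction of the $v_0$- and $v_1$-Bocksteins produces a tower
\[
\cdots \to \Sigma^2 ko \xrightarrow{\eta} \Sigma ko \xrightarrow{\eta} ko
\]
of $ko$-bimodules whose successive fibers give a filtration of $ko$ whose $n$th associated graded piece is $\Sigma^n ku$.

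Applying the functor $\THH(ko;-)$ to this filtration and using Lemma~\ref{lem:THHbimod} to identify $\THH(ko;\Sigma^n ku) \simeq \Sigma^n \THH(ko;ku)$ yields an exact couple whose spectral sequence has
\[
E_1 \cong \bigoplus_{n\geq 0} \Sigma^n \THH_\ast(ko;ku) \cong \THH_\ast(ko;ku)[\eta],
\]
with $\eta$ in bidegree $(1,1)$, abutting to $\THH_\ast(ko)$. Convergence is automatic: the image of $\eta^n$ on $\pi_\ast(ko)$ is eventually zero (in fact $\eta^3=0$ in $\pi_\ast(ko)$), so the filtration is bounded in each total degree. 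As in the remark following (\ref{B1})--(\ref{B4}), this spectral sequence can alternatively be realized as the $ku$-based Adams spectral sequence for $\THH(ko)$ in the category of $ko$-modules, with a minimal resolution giving equality of $E_1$-terms.

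The multiplicative structure is inherited from $ko$: since $ko$ is commutative, $\THH(ko)$ is a commutative $ko$-algebra spectrum, and the $\eta$-adic filtration of $ko$ is compatible with the multiplication, so the pairings descend to each $E_r$-page compatibly with the differentials, exactly as for Spectral Sequences~(\ref{B1})--(\ref{B4}). The main (mild) point to track is the indexing convention — placing $\eta$ in bidegree $(1,1)$ so that the $E_1$-term is the full polynomial algebra $\THH_\ast(ko;ku)[\eta]$. The algebraic relation $\eta^3=0$ is then not visible on $E_1$ but is recovered only after running the differentials and resolving extensions. Establishing the spectral sequence itself is thus a routine Bockstein construction; the real work, carried out in the remainder of Section~\ref{sec:ko}, lies in determining the differentials and the hidden $\eta$-extensions.
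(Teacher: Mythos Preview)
Your construction is correct and is exactly the paper's approach: the spectral sequence is the $\eta$-Bockstein spectral sequence obtained by iterating the cofiber sequence $\Sigma ko \xrightarrow{\eta} ko \to ku$ and applying $\THH(ko;-)$, with convergence forced by $\eta^3=0$.

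One side remark in your write-up is inaccurate, however. You assert that this spectral sequence can be realized as the $ku$-based Adams spectral sequence in $ko$-modules ``with a minimal resolution giving equality of $E_1$-terms.'' The paper points out that this is precisely where the $\eta$-Bockstein differs from the earlier cases: because $(ku_\ast,\pi_\ast(ku\sma_{ko}ku))$ is only a Hopf algebroid rather than a Hopf algebra, the $E_1$-page is \emph{not} given by the minimal Adams resolution, and the Bockstein and Adams spectral sequences agree only from $E_2$ onward. A related subtlety you pass over is that the multiplicative structure on $E_1$ is not quite commutative: $v_1$ and $\eta$ anti-commute. Neither point affects the existence of the spectral sequence as stated in the proposition, but you should drop or correct the minimal-resolution claim.
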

This spectral sequence is the Bockstein spectral sequence associated to the cofiber sequence
\[
\Sigma ko\xrightarrow{\eta} ko\to ku.
\]
Since $\eta^3=0$ in $ko_\ast$, the spectral sequence has a horizontal vanishing line at filtration $3$, and $E_4=E_\infty$.

Just as before, this spectral sequence is essentially an Adams spectral sequence: this is the $ku$-based Adams spectral sequence in the category of $ko$-modules. There is a slight difference between this case and the earlier ones: the $E_1$-page is not given by the appropriate minimal resolution (since $(ku_\ast, \pi_\ast(ku\sma_{ko} ku))$ is only a Hopf algebroid). This difficulty is reflected in the multiplicative structure: $v_1$ and $\eta$ anti-commute. However, from the $E_2$-page and beyond, the Adams and Bockstein spectral sequences coincide. The classes in $\THH_\ast(ko;ku)$ have bidegree $(\ast,0)$, while $\eta$ has bidegree $(1,1)$, and the differentials are Adams type.

\subsection{Statement of results}
The homotopy groups of $\rTHH(ko)$ sit as an extension of two parts, one from the torsion free part of $\rTHH_*(ko;ku)$ (though this part will contain torsion) and the other from the torsion part of $\rTHH_*(ko;ku)$.

Define a $ko_*$-module $F^{ko}$ as follows. Additively,
\[
F^{ko} = \bigoplus_{i \neq 2^n-2} \Sigma^{4i} \Z[\eta]/(2\eta,\eta^2)
\oplus \bigoplus_{n \geq 1} \Sigma^{4(2^n-2)} \Z.
\]
Multiplication by $v_1^2$ sends the $\Z$ in degree $4i$ isomorphically to the $\Z$ in degree $4i+4$, except when $i=2^n-2$, in which case multiplication by $v_1^2$ sends the $\Z$ to $2 \Z$. (Since $v_1^2 \not \in ko_*$ we should instead say that multiplication by $2v_1^2 \in ko_*$ acts as multiplication by $2$ when $i \neq 2^n-2$ and as multiplication by $4$ when $i=2^n-2$.) In all figures that follow, multiplication by $v_1^2$ will be denoted by a dashed line.

Next we define the ``torsion pieces'' $\tilde{T}_n$ and $D\tilde{T}_n$. Let
\[
\tilde{T}_n = \Z[v_1^2]/(2^n, 2^{n-1} v_1^2, \ldots, (v_1^2)^{2^n-1}).
\]

Let $D$ denote the $\Z[v_1^2]$-module given as the quotient
\[
\Z/{2^\infty}[v_1^2]\to \Z/{2^{\infty}}[v_1^{\pm 2}]\to D\to 0.
\]
This is the dualizing object for $\Z[v_1^2]$-modules. Let
\[
D\tilde{T}_n=Hom_{\Z[v_1^2]}(\tilde{T}_n,D)
\]
denote the dual of $\tilde{T}_n$. Since $\tilde{T}_n$ is positively graded, starting in dimension $0$, $D\tilde{T}_n$ is negatively graded with top class in dimension $0$.

These modules occur in dual pairs, grouped according to torsion patterns from $\rTHH_\ast(ko;ku)$. Let
\[
T_n^{ko}=\bigoplus_{k=1}^{2^{n-1}-1}\left(\Sigma^{16k}\tilde{T}_{\nu(k)+1}\oplus\Sigma^{2^{n+3}-16k-10}D\tilde{T}_{\nu(k)+1}\right)\oplus
\left(\tilde{T}_n\oplus\Sigma^{2^{n+3}-10}D\tilde{T}_n\right)
\]

\begin{theorem} \label{thm:THHko}
As a $ko_*$-module, $\rTHH_\ast(ko)$ sits in a short exact sequence
\[
0\to\Sigma^5F^{ko} \to\rTHH_\ast(ko)\to \bigoplus_{n>0} \Sigma^{2^{n+3}+4}T_n^{ko}\to 0.
\]
The extension is completely determined by the requirement that twice the generator of lowest degree in $\Sigma^{2^{n+4}-6}D\tilde{T}_n\subset \Sigma^{2^{n+3}+4}T_n^{ko}$ is the unique non-zero element in $\Sigma^5F^{ko}$ in that degree.
\end{theorem}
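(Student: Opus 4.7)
The plan is to derive Theorem~\ref{thm:THHko} by running the $\eta$-Bockstein spectral sequence (\ref{eq:E_1ko}), whose input $\THH_\ast(ko;ku)$ must first be computed by the same dueling-Bockstein method that handled $\THH_\ast(\ell)$. I would begin with this preliminary computation, which produces a torsion-free part (a $v_1^2$-tower on $\lambda_1'$ with prescribed $v_1$-divisibilities analogous to Theorem~\ref{thm:v1Divisibility}) and a torsion part (patterns mirroring the $T_n$ of Section~\ref{sec:prelim}, repackaged via only $v_1^2$ as the summands $\tilde{T}_n$ and $D\tilde{T}_n$). The recursive $p=2$ structure of the $T_n$-modules dualizes into the $\tilde{T}_n\oplus D\tilde{T}_n$ pairs, so the torsion part of $\THH_\ast(ko;ku)$ assembles into $\bigoplus_{n>0}\Sigma^{2^{n+3}+4}T_n^{ko}$ directly.

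Next, compute the $\eta$-Bockstein differentials. Using the identification of the spectral sequence with the $ku$-based Adams spectral sequence in the category of $ko$-modules (from the $E_2$-page on), together with the anti-commutation $\eta v_1=-v_1\eta$ and $\eta^3=0$, one shows that every odd $v_1$-power in the torsion-free tower carries a nontrivial $\eta$-Bockstein differential, except where the class in question has already been made $2$-divisible by the internal $v_1$-extensions. The resulting contribution to $E_\infty$ is exactly $\Sigma^5 F^{ko}$: at generic $i$ one is left with a $\Z[\eta]/(2\eta,\eta^2)$, while at $i=2^n-2$ the odd $v_1$-multiple is absent on $E_1$ (having been replaced by a $2$-divisibility), so its would-be $\eta$-target survives, producing the $\Sigma^{4(2^n-2)}\Z$ summand. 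On the torsion summands, $\eta$ already acts by zero in $\THH_\ast(ko;ku)$ because those classes lie in even degrees on $v_1^2$-towers, so no nontrivial $\eta$-differentials occur and the torsion passes unchanged to the quotient $\bigoplus_{n>0}\Sigma^{2^{n+3}+4}T_n^{ko}$. The vanishing line at filtration $3$ forces $E_4=E_\infty$, and the additive short exact sequence of the theorem can be read off.

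Third, and this is the main obstacle, I would resolve the single stated hidden extension for each $n$. Let $d$ be the degree of the lowest-degree generator of $\Sigma^{2^{n+4}-6}D\tilde{T}_n$. At $E_\infty$ this generator is an order-$2$ class, and the only other contribution to $\rTHH_d(ko)$ is the $\eta$-multiple inside $\Sigma^5 F^{ko}$, which sits at higher $\eta$-filtration. The key input is a cyclicity statement for $\rTHH_d(ko)$ analogous to Lemma~\ref{lem:zerocyclic}: an auxiliary $v_0$-Bockstein computation converging to $\rTHH_\ast(ko;H\Z_{(2)})$ shows that $\rTHH_d(ko)$ has a single $2$-local generator, which forces the nontrivial extension $2\tilde{y}\doteq \eta\cdot(\text{lift of the generator from }\Sigma^5 F^{ko})$. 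After a $2$-adic unit rescaling of the lift we obtain equality. In every other degree the $E_\infty$-page already exhibits the full order of $\rTHH_\ast(ko)$, so no further hidden extensions occur; this is again verified against the $H\Z_{(2)}$-based Bockstein, completing the computation.
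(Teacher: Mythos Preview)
Your proposal has a genuine gap at the second step: the claim that ``no nontrivial $\eta$-differentials occur'' on the torsion summands of $\THH_\ast(ko;ku)$ is false, and this error propagates through the rest of the argument.

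The torsion in $\THH_\ast(ko;ku)$ is a sum of modules $T_n'$ (analogous to the $T_n$ for $ku$), and these are \emph{not} the modules $T_n^{ko}$. In the $\eta$-Bockstein spectral sequence the torsion generators $b'_{a2^k}$ carry nontrivial differentials: there are $d_1$'s within each $T_n'$ of the form $d_1(b'_{a2^k})\doteq\eta\,\tfrac{a-1}{2}\,v_1^{2^{k+2}-1}b'_{(a-1)2^k}$ for odd $a>1$, and there are $d_2$'s from the torsion part to the torsion-free part, namely $d_2(b'_{2^n})\doteq\eta^2\,\tfrac{v_1^{2^{n+2}-2}}{2^{n+1}}\lambda_1'$. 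The $d_1$'s are what break each $T_n'$ into the $\tilde{T}_k\oplus D\tilde{T}_k$ pieces that assemble into $T_n^{ko}$; the $d_2$'s are what truncate the $\eta$-towers in the torsion-free part left untruncated after $d_1$ (these are the towers on $\tfrac{v_1^{2^{n+1}-2+2i}}{2^n}\lambda_1'$), and hence are what produce $F^{ko}$. Your parity reasoning (``even degrees on $v_1^2$-towers'') says nothing about $\eta$-Bockstein differentials, which encode $\eta$-filtration in $\THH_\ast(ko)$, not degree parity in $\THH_\ast(ko;ku)$. Without these differentials you cannot obtain either $F^{ko}$ or $T_n^{ko}$ on $E_\infty$.

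Your extension argument is also not the paper's, and as stated it is unclear. The paper does not invoke any cyclicity input from an auxiliary $v_0$-Bockstein; instead it reads the hidden multiplication-by-$2$ directly off the exact couple. The key observation is that the class $a=b'_{3\cdot 2^{n-1}}$ satisfies $d_1(a)\doteq\eta\,v_1^{2^{n+1}-1}b'_{2^n}$ (so $\partial a$ lifts the lowest-degree generator of $\Sigma^{2^{n+4}-6}D\tilde{T}_n$), while $2a$ is a $d_1$-cycle with $d_2(2a)\doteq\eta^2\,v_1^{2^{n+1}}\tfrac{v_1^{2^{n+2}-2}}{2^{n+1}}\lambda_1'$. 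Tracing through the exact couple then forces $2\cdot\partial a$ to be the $\eta$-multiple in $\Sigma^5F^{ko}$. This is precisely the kind of interaction between torsion and torsion-free parts that your proposal rules out.
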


\begin{cor}
On $\rTHH(ko)$, $\eta^2$ acts trivially.
\end{cor}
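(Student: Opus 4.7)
The plan is to combine the short exact sequence from Theorem~\ref{thm:THHko} with the relation $2\eta=0$ in $ko_\ast$ and the defining relation $\eta^2=0$ in each summand $\Z[\eta]/(2\eta,\eta^2)$ of $F^{ko}$. The argument works by squeezing $\eta\cdot\rTHH_\ast(ko)$ into a very small subgroup of $\Sigma^5 F^{ko}$ on which a further application of $\eta$ is forced to vanish.

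First, the quotient $\bigoplus_{n>0}\Sigma^{2^{n+3}+4}T_n^{ko}$ is built from $\Z[v_1^2]$-modules and so carries the trivial $\eta$-action. The short exact sequence therefore gives $\eta\cdot\rTHH_\ast(ko)\subseteq \Sigma^5 F^{ko}$. Moreover, since $2\eta = 0$ in $ko_\ast$, every element of the form $\eta x$ is annihilated by $2$, so $\eta\cdot\rTHH_\ast(ko)$ actually lies in the $2$-torsion subgroup of $\Sigma^5 F^{ko}$.

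A direct degree check pins this $2$-torsion subgroup down. In $\Sigma^5 F^{ko}$ the $\Z$-generators sit in degrees $4i+5$ (coming from $\Sigma^{4i+5}\Z[\eta]/(2\eta,\eta^2)$ with $i\neq 2^n-2$) and in degrees $2^{n+2}-3$ (coming from $\Sigma^{4(2^n-2)+5}\Z$), and both families are torsion-free. The only $\Z/2$-contributions are the $\eta$-classes in degrees $4i+6$, $i\neq 2^n-2$. The three sets of degrees are pairwise disjoint by a quick parity and divisibility check, so the $2$-torsion of $\Sigma^5 F^{ko}$ is exactly the $\F_2$-span of the $\eta$-classes.

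Finally, applying $\eta$ once more to an $\eta$-class of $\Sigma^{4i+5}\Z[\eta]/(2\eta,\eta^2)$ gives $\eta^2$ inside that summand, which is zero by the defining relation; equivalently, the target degree $4i+7$ contains nothing in $\Sigma^5 F^{ko}$. Combining the steps yields $\eta^2\cdot\rTHH_\ast(ko)=\eta\cdot\bigl(\eta\cdot\rTHH_\ast(ko)\bigr)=0$, as claimed. The only mildly technical step is the degree check identifying the $2$-torsion; this is a straightforward parity argument, but it is precisely what reduces the problem to the relation $\eta^2=0$ already built into $F^{ko}$.
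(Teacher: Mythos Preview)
Your proof is correct and follows the same route as the paper's one-line argument: both use the short exact sequence of Theorem~\ref{thm:THHko}, the vanishing of $\eta$ on the torsion quotient, and the structure of $F^{ko}$. You make explicit the step the paper leaves to the reader---that $\eta x$ is $2$-torsion (from $2\eta=0$ in $ko_\ast$) and hence must land among the $\eta$-classes of $\Sigma^5 F^{ko}$, where a further $\eta$ vanishes---which is exactly what upgrades the naive conclusion $\eta^3=0$ to the desired $\eta^2=0$.
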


\begin{proof}
This is clear because $\eta^2$ acts trivially on $\tilde{F}$ and $\eta$ acts trivially on each $\tilde{T}_n$ and $D\tilde{T}_n$.
\end{proof}

There is another $ko$-module in which $\eta^2$ is $0$: the connective, self-conjugate $K$-theory spectrum $kc=ko\wedge C(\eta^2)$. This is an $E_\infty$-ring spectrum (arising as the connective cover of $KU^{h\Z}$ where $\Z$ acts as $\psi^{-1}$ through its quotient $\Z/2$), and for modules over this spectrum, $v_1^2$-multiplication is a well-defined operation, since this is an element of $\pi_\ast kc$ \cite{Bo90}. Since $\eta^2$ acts as zero in $\rTHH(ko)$, we present the following conjecture.

\begin{conj}
The $ko$-module summand $\rTHH(ko)$ admits the structure of a module over $kc$.
\end{conj}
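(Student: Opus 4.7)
The plan is to treat the conjecture as a lifting problem along the $E_\infty$-$ko$-algebra map $ko \to kc$. Since $kc = ko \wedge C(\eta^2)$ carries an $E_\infty$-$ko$-algebra structure (arising as the connective cover of $KU^{h\Z}$), a $kc$-module structure on the $ko$-module $\rTHH(ko)$ is equivalent to a coherent trivialization of the $\eta^2$-self-map $\eta^2 \cdot 1 \colon \Sigma^2 \rTHH(ko) \to \rTHH(ko)$. The primary obstruction is the class of this self-map in $[\Sigma^2 \rTHH(ko), \rTHH(ko)]^{ko}$, and beyond this there is a tower of higher obstructions naturally housed in suitable topological Andr\'e--Quillen cohomology groups.

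First, I would verify the primary obstruction. The corollary establishes that $\eta^2 = 0$ on $\pi_\ast\rTHH(ko)$, but this must be strengthened to vanishing at the level of $ko$-module maps. Using the decomposition of Theorem~\ref{thm:THHko}, I would argue summand by summand. On the torsion pieces $\tilde T_n$ and $D\tilde T_n$, the element $\eta$ itself acts as zero since these are $v_1^2$-towers concentrated in degrees where $\eta$-multiplication lands in trivial groups, so $\eta \cdot 1$ is already null as a map of $ko$-modules. On $F^{ko}$, the blocks $\Sigma^{4i} \Z[\eta]/(2\eta, \eta^2)$ satisfy $\eta^2 = 0$ tautologically, and the remaining $\Sigma^{4(2^n-2)} \Z$ blocks support no nontrivial $\eta$-action. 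Assembling these, the $\eta^2$-self-map is null-homotopic as a map of $ko$-modules.

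Second, I would turn to the higher coherence obstructions. A structural route is to use that $kc$ admits an $E_\infty$-model built from $ko$ by freely killing $\eta^2$, so that obstructions to extending the module structure identify with classes in $\operatorname{TAQ}^{\ast}_{ko}\bigl(kc;\, F_{ko}(\rTHH(ko), \rTHH(ko))\bigr)$. A more computational route is to run the $\eta$-Bockstein spectral sequence~(\ref{eq:E_1ko}) in the category of $kc$-modules: since the $E_\infty$-page and all hidden extensions are already understood, and since the differentials and extensions factor through the vanishing of $\eta^2$, a careful reconstruction should realize $\rTHH(ko)$ as a cellular $kc$-module whose underlying $ko$-module matches Theorem~\ref{thm:THHko}.

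The main obstacle is exactly this higher-obstruction tower. While the primary vanishing is accessible from our explicit computation, coherent trivializations of all higher $E_\infty$-structure maps of $kc$ over $ko$ require either a substantial computation in topological Andr\'e--Quillen cohomology or a structural shortcut that bypasses obstruction theory entirely. The most promising route for the latter appears to be Galois descent: realize $\rTHH(ko)$ as a $C_2$-fixed-point construction from a $\THH(ku)$-level calculation via the Galois extension $ko \to ku$, so that the $kc$-module structure would arise from the $\Z$-equivariant (through $C_2$) geometry underlying $KU^{h\Z}$, sidestepping the obstruction tower altogether.
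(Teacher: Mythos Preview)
The statement in question is labeled as a \emph{Conjecture} in the paper, and the paper offers no proof of it; it is presented immediately after the corollary that $\eta^2$ acts trivially on $\rTHH_*(ko)$, as motivation only. There is therefore no proof in the paper to compare your proposal against.

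Your proposal is also not a proof, and you acknowledge as much: you explicitly identify the higher-obstruction tower as ``the main obstacle'' and only sketch two possible approaches (a $\operatorname{TAQ}$ computation or a Galois-descent argument) without carrying either one out. So at best this is an outline of a strategy, not a verification of the conjecture.

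Even the primary obstruction step contains a genuine gap. Your argument that $\eta^2 \cdot 1$ is null as a $ko$-module self-map proceeds ``summand by summand'' using the decomposition of Theorem~\ref{thm:THHko}, but that theorem only describes $\rTHH_*(ko)$ as a $ko_*$-module; it does \emph{not} assert a splitting of $\rTHH(ko)$ as a wedge of $ko$-module spectra. Knowing that $\eta$ or $\eta^2$ acts as zero on homotopy groups does not, by itself, make the corresponding self-map of spectra nullhomotopic. To upgrade the vanishing from $\pi_*$ to the homotopy category of $ko$-modules you would need either a spectrum-level splitting compatible with the $\eta$-action or a separate argument computing $[\Sigma^2 \rTHH(ko), \rTHH(ko)]^{ko}$ directly. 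Without this, even the first obstruction is not settled, and the conjecture remains open as the paper states.
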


Figure~\ref{fig:THHko} shows the homotopy of $\rTHH(ko)$ through degree $34$, while Figure~\ref{fig:THHkoTorsion} shows the torsion in $\rTHH_*(ko)$ arising from $\Sigma^{32}T_2^{ko}$ and $\Sigma^{68}T_3^{ko}$ between degree $36$ and $92$. In Figure~\ref{fig:THHkoTorsion} the vertical arrows indicate extensions connecting the classes to $\Sigma^5 F^{ko}$.

\begin{figure}[ht]
\centering
\includegraphics[width=.9\textwidth]{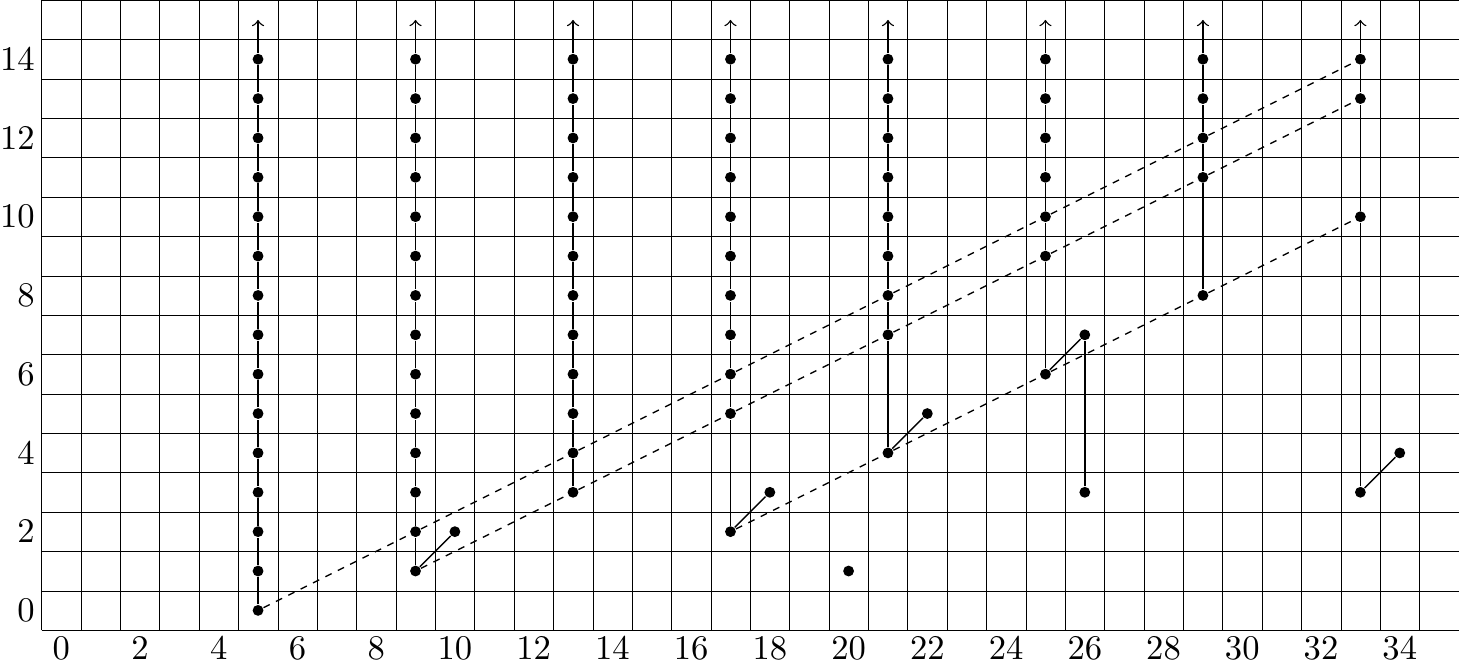}
\caption{$\rTHH_*(ko)$ through degree $34$} \label{fig:THHko}
\end{figure}

\begin{figure}[ht]
\centering
\includegraphics[width=.9\textwidth]{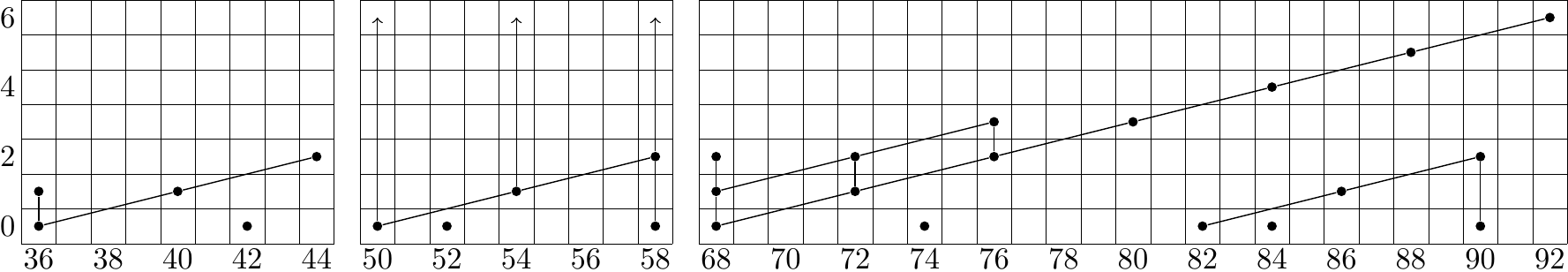}
\caption{The torsion between degrees $36$ and $92$} \label{fig:THHkoTorsion}
\end{figure}

\subsection{Computing $\rTHH_\ast(ko;ku)$}
This computation is similar to that of $\THH(ku)$, and we omit the proofs. We remark, however, that $\THH_S(ku)$ acts on $\THH(ko;ku)$, allowing a faithful mirroring of the proofs. Let $a_i'$ denote a lift of $\lambda_2\mu^{i-1}$ from $\rTHH_\ast(ko;H\F_2)$ to $\rTHH_\ast(ko;H\Z)$ and let $b_i'=\lambda_1'a_i'$. We therefore have $|a_i'|=8i-1$ and $|b_i'|=8i+4$. These classes play the roles of the classes $a_i$ and $b_i$. We have the following results:

\begin{theorem}\label{thm:v1Divisibilityko}
The torsion free summand of $\rTHH_*(ko;ku)$ is $F'\cdot\lambda_1'$, where $F'$ is the $ku_*$-module
\[
F'=ku_*\left[\frac{v_1^{2^{k+1}-3}}{2^k};\,k\geq 1\right]\subset ku_*\otimes\Q.
\]
\end{theorem}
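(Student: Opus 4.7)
The plan is to mirror the proof of Theorem~\ref{thm:v1Divisibility}, exploiting the fact noted at the start of Section~\ref{sec:ko} that $\THH_S(ku)$ acts on $\THH(ko;ku)$, so that the cap-product machinery developed in Section~\ref{sec:THCku} applies directly.

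First, I would pin down the torsion-free rank rationally. The $H\Q$-based B\"okstedt spectral sequence for $\THH(ko;ku)$ collapses for degree reasons, yielding
\[
\THH_*(ko;ku)\otimes\Q\cong (ku_*\otimes\Q)[\lambda_1']/(\lambda_1')^2.
\]
Hence the torsion-free summand of $\rTHH_*(ko;ku)$ is rationally rank one in each shift of $|\lambda_1'|=5$ by an even integer, and the content of the theorem is the precise $2$-divisibility along each $v_1^j\lambda_1'$.

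Next, I would run the four Bockstein spectral sequences analogous to (\ref{B1})--(\ref{B4}) for $ko;ku$. The $k(1)$-Bockstein produces truncated $v_1$-towers as in Theorem~\ref{thm:k1SS} (with truncation heights and degrees involving odd-indexed $\lambda_j'$ shifted by $|\lambda_1'|-|\lambda_1|=2$), and the $H\Z_{(2)}$-Bockstein yields integral lifts $a_i',\,b_i'=\lambda_1'a_i'$ with $|a_i'|=8i-1$, $|b_i'|=8i+4$ playing the roles of $a_i$ and $b_i$. The $\THH_S(ku)$-action lets the classes $c_k\in\THC_S^*(ku)$ act on $\THH(ko;ku)$ by cap product, so the May-style differentials $d_{k+1}(\mu^{2^k})\doteq v_0^{k+1}a'_{2^k}$ and the cap-product bootstrapping of Lemma~\ref{lem:AllbiCycles} transfer essentially unchanged. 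The key technical step, and the main obstacle, is to re-verify the counting in the proof of Lemma~\ref{lem:ExtensionLemma} after the shift and confirm that $\rTHH_{2^{n+3}-1}(ko;ku)\cong\Z_{(2)}$ for all $n\geq 0$; one must rule out spurious generators by comparing $p$-adic expansions degree by degree.

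Once cyclicity is in place, the hidden additive extensions are forced by the Bockstein structure:
\[
2\cdot a_1'\doteq v_1\lambda_1',\qquad 2\cdot v_0^k a_{2^k}'\doteq v_1^{2^{k+1}}v_0^{k-1}a_{2^{k-1}}'\text{ for }k\geq 1,
\]
with the $v_1$-exponents determined by matching degrees against $|a_{2^k}'|=2^{k+3}-1$. Iterating these extensions in $\rTHH_*(ko;ku)\otimes\Q$ gives $a'_{2^k}=v_1^{2^{k+2}-3}\lambda_1'/2^{2k+1}$, so each $a'_{2^k}$ is realized as a $ku_*$-algebra product of the listed fractional generators (for instance $a_2'=v_1^5\lambda_1'/8=(v_1/2)^3 v_1^2\cdot\lambda_1'$). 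Combined with the rational collapse, which ensures no further generators can appear, this exhibits the torsion-free summand as exactly $F'\cdot\lambda_1'$.
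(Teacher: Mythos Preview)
Your overall strategy is exactly what the paper intends: it explicitly omits this proof, saying the computation is parallel to that of $\THH(ku)$, and you have correctly identified the analogues of the key ingredients (the rational collapse, the four Bockstein spectral sequences, the cyclicity lemma, and the forced hidden extensions).

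However, your final paragraph contains a genuine error. The class $a'_{2^k}$ does \emph{not} lift to $\rTHH_*(ko;ku)$ for $k\geq 1$: in the $v_1$-Bockstein spectral sequence it supports a nontrivial differential (the analogue of Theorem~\ref{thm:B4diff} with $n=1$ gives $d_?(a'_{2^k})\doteq (2^k-1)v_1^?b'_{2^k-1}\neq 0$). What survives is $v_0^k a'_{2^k}$, and iterating your extensions correctly gives
\[
v_0^k a'_{2^k}\;=\;\frac{v_1^{2^{k+2}-3}}{2^{k+1}}\,\lambda_1'
\]
rationally---which is precisely the $(k{+}1)$-st listed generator of $F'$. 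Your displayed formula $a'_{2^k}=v_1^{2^{k+2}-3}\lambda_1'/2^{2k+1}$ is the rational value of a class that does not exist integrally.

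Relatedly, you have misread the notation: $F'=ku_*[\,\cdots]$ denotes the $ku_*$-\emph{submodule} of $ku_*\otimes\Q$ generated by $1$ and the listed fractions, not a ring adjunction. Indeed $F'$ is not closed under products: $(v_1/2)^2=v_1^2/4$, but in degree $9$ the torsion-free part of $\rTHH_*(ko;ku)$ is generated by $v_1 a'_1=v_1^2\lambda_1'/2$, so the degree-$4$ part of $F'$ is $\Z_{(2)}\cdot v_1^2/2$ and does not contain $v_1^2/4$. Your example $a'_2=(v_1/2)^3 v_1^2\lambda_1'$ therefore has no meaning inside $F'\cdot\lambda_1'$. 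The correct conclusion is simply that the surviving classes $v_0^k a'_{2^k}$ realize the listed module generators, and $v_1$-multiplication fills in the intermediate degrees.
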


The torsion is also similar. Define $T_0'=ku_*/(2,v_1)$, and define $T_n'$ recursively by gluing together two copies of $T_{n-1}'$ along a $v_1$-tower of length $2^{n+2}-3$. Alternatively, define $T_n'$ as $\Sigma^{-2} v_1 T_n \subset \Sigma^{-2} T_n$.

\begin{theorem} \label{thm:Torsionko}
The torsion summand of $\rTHH_*(ko;ku)$ is, as a $ku_*$-module, $2$-locally isomorphic to the following direct sum:
\[
\bigoplus_{n \geq 0} \Sigma^{2^{n+3} + 4}T_n'.
\]
\end{theorem}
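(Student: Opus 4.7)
The plan is to mirror the four-Bockstein-spectral-sequence program that yielded Theorem~\ref{thm:Torsion}, specialized to $p=2$ (where $\ell$ coincides with $ku_{(2)}$), using $ko$ in place of $\ell$ throughout and exploiting the $\THH_S(ku)$-action on $\THH(ko;ku)$ to transport the capping arguments wholesale. The starting input is
\[
\THH_*(ko;H\F_2) = E(\lambda_1',\lambda_2) \otimes P(\mu),\quad |\lambda_1'|=5,\;|\lambda_2|=7,\;|\mu|=8,
\]
which differs from the $\ell$-version only in that $\lambda_1'$ sits in degree $5$ rather than in degree $3$. Writing $a_i' = \mu^{i-1}\lambda_2$ and $b_i' = \lambda_1'\lambda_2\mu^{i-1}$, one has $|a_i'| = 8i-1$ and $|b_i'| = 8i+4$, the latter shifted by $+2$ relative to its $\ell$-analogue $b_i$.

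First I would run the analogues of Spectral Sequences (\ref{B1}) and (\ref{B2}). The $K(1)$-based B\"okstedt spectral sequence of \cite{McSt,AnRo} applies to $ko$, and the differentials are governed by the same mod-$2$ Bockstein from $\mu$ to $\lambda_2$ together with May's formula for higher Bocksteins of $p$th powers. This computes $\THH_*(ko;k(1))$ and $\THH_*(ko;H\Z_{(2)})$ in direct parallel with Section~\ref{sec:BSS1and2}, the only bookkeeping change being the shift in the degree of $\lambda_1'$.

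Next I would run the $v_0$- and $v_1$-Bockstein spectral sequences converging to $\THH_*(ko;ku)$. The vanishing and cyclicity statements of Section~\ref{sec:SSB3} transport degree-for-degree, since their proofs rest only on the $\F_2[v_1]$-module structure of the $k(1)$-homotopy determined in the previous step. For the final $v_1$-Bockstein, the $\THH_S(ku)$-action on $\THH(ko;ku)$ supplies a $\THC^*_S(ku)$-cap-product action that commutes with the Bockstein differentials, and the same Kummer-theorem binomial-coefficient bookkeeping that powered Theorems~\ref{thm:B4diff} and~\ref{thm:LastTorsionExt} produces the $ko$-analogues of those differentials and hidden additive extensions.

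Finally, the identification with $\bigoplus_{n\geq 0}\Sigma^{2^{n+3}+4}T_n'$ is formal accounting once the differentials and extensions are in hand. The overall shift moves from $2^{n+3}+2$ (in the $ku$-case) to $2^{n+3}+4$ because of the $+2$ shift in $b_i'$ versus $b_i$, and since $\lambda_1'$ occupies the degree that $v_1\lambda_1$ would occupy, with no class playing the role of $\lambda_1$ itself, the bottom $v_1$-rung of each torsion block is missing. Together these two effects yield exactly $\Sigma^{2^{n+3}+2} v_1 T_n = \Sigma^{2^{n+3}+4}T_n'$, with the recursive gluing length $2^{n+2}-3$ arising from the length $2^{n+2}-2$ in the $\ell$-case by subtracting that missing bottom rung. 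The main obstacle is verifying that the $\THC^*_S(ku)$-action on $\THH(ko;ku)$ interacts with all four Bockstein spectral sequences as expected; once that compatibility is in place, cyclicity and vanishing force the stated torsion structure.
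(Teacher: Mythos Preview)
Your proposal is correct and matches the paper's own approach: the paper explicitly omits the proof, remarking only that the $\THH_S(ku)$-action on $\THH(ko;ku)$ ``allow[s] a faithful mirroring of the proofs'' from the $\THH(ku)$ computation, which is precisely the program you outline. The paper also records, as a complementary observation, that the ring map $ko\to ku$ induces maps of all four Bockstein spectral sequences sending $b_i'\mapsto v_1 b_i$, thereby exhibiting $\rTHH_*(ko;ku)$ as the $ku_*$-submodule of $\rTHH_*(ku)$ generated by $v_1\lambda_1$, the $2^k a_{2^k}$, and $v_1$ times each torsion pattern $T_n$---this gives an alternative route to the same identification $\Sigma^{2^{n+3}+4}T_n' = \Sigma^{2^{n+3}+2}\,v_1 T_n$ that you reach by rerunning the spectral sequences.
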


These results are closely related to those of Section~\ref{sec:SSB4}. The natural ring map from $ko$ to $ku$ induces maps of the four Bockstein spectral sequences, and this will allow us to relate the resulting computations. As initial input, the B\"okstedt spectral sequence shows that the natural map from $\THH_\ast(ko;H\F_2)$ to $\THH_\ast(ku;H\F_2)$ sends $\lambda_1'$ to $0$ and $\lambda_2$ and $\mu$ to themselves. This means that $a_i'$, being represented by $\lambda_2\mu^{i-1}$, maps to $a_i$. The $ku_\ast$-module structure then forces $\lambda_1'$ and $2^ka_{2^k}'$ in $\THH_\ast(ko;ku)$ to map to $v_1\lambda_1$ and $2^ka_{2^k}$ respectively in $\THH_\ast(ku)$. Naturality of the Bockstein spectral sequences then ensures that $b_i'$ maps to $v_1b_i$.

\begin{prop}
The homotopy of $\rTHH(ko;ku)$ sits as the $ku_\ast$-submodule of $\rTHH_\ast(ku)$ generated by $v_1\lambda_1$, the classes $2^k a_{2^k}$, and $v_1$ times the torsion patterns $T_n$.
\end{prop}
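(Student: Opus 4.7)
The plan is to combine naturality of the four Bockstein spectral sequences with the explicit module descriptions of both $\rTHH_\ast(ko;ku)$ and $\rTHH_\ast(ku)$ in order to identify the induced map on each summand and then check that it is injective with the claimed image.

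First, I would invoke naturality of the $v_0$- and $v_1$-Bockstein spectral sequences with respect to the ring map $ko \to ku$, together with the identification of the induced map on the B\"okstedt $E_2$-term from \cite{AnRo}: $\lambda_1' \mapsto 0$, $\lambda_2 \mapsto \lambda_2$, and $\mu \mapsto \mu$ on mod $2$ homology. Tracking these through the spectral sequences --- and using that a generator vanishing modulo $v_1$ in the target must be a $v_1$-multiple on the $E_\infty$ page --- recovers the identifications listed in the paragraph preceding the proposition: $\lambda_1' \mapsto v_1 \lambda_1$, $a_i' \mapsto a_i$, $2^k a_{2^k}' \mapsto 2^k a_{2^k}$, and $b_i' = \lambda_1' a_i' \mapsto v_1 b_i$.

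Second, I would use Theorems~\ref{thm:v1Divisibilityko} and~\ref{thm:Torsionko} to parametrize every class in $\rTHH_\ast(ko;ku)$ as a $ku_\ast$-combination of $\lambda_1'$ (with the specified $2$-divisibilities in $F'$) and of the torsion generators of the $T_n'$. Under the map from Step~1, the image of the torsion-free part is $v_1 F' \cdot \lambda_1$, which is precisely the submodule of $F \cdot \lambda_1$ generated by $v_1 \lambda_1$ together with the inherited $2$-divisibility of the ambient module: the key numerical check is that the generator $v_1^{2^{k+1}-3}/2^k$ of $F'$ maps to $v_1^{2^{k+1}-2}/2^k$, exactly the corresponding generator of $F$. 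The image of the torsion part is the direct sum of the submodules $v_1 T_n$, matching the stated description.

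The third step is injectivity, which I would verify degree-by-degree by comparing orders. Multiplication by $v_1$ is injective on the torsion-free summand $F \cdot \lambda_1$, so no nonzero element of $F' \cdot \lambda_1'$ can map to zero. On the torsion part, the identification $T_n' \cong \Sigma^{-2} v_1 T_n$ noted in Section~\ref{sec:ko} shows that $T_n'$ has the same $ku_\ast$-order in each degree as $v_1 T_n \subset T_n$, so the surjection onto $v_1 T_n$ forced by Step~2 must be an isomorphism. I expect the main technical obstacle to be the careful verification of $T_n' \cong v_1 T_n$: one must match the recursive gluing description of $T_n'$ (two copies of $T_{n-1}'$ glued along a $v_1$-tower of length $2^{n+2}-3$) with the corresponding $v_1$-submodule of $T_n$, which loses precisely one class at the top of each maximal $v_1$-tower and produces the stated $\Sigma^{-2}$ shift.
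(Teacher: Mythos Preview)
Your proposal is correct and follows essentially the same approach as the paper. The paper does not give a separate proof environment for this proposition: the paragraph immediately preceding it establishes the images $\lambda_1'\mapsto v_1\lambda_1$, $a_i'\mapsto a_i$, $2^ka_{2^k}'\mapsto 2^ka_{2^k}$, and $b_i'\mapsto v_1b_i$ via naturality of the B\"okstedt and Bockstein spectral sequences (exactly your Step~1), and the proposition is then stated as a direct consequence of this together with the structure Theorems~\ref{thm:v1Divisibilityko} and~\ref{thm:Torsionko} (your Step~2). Your Step~3 on injectivity is more explicit than anything the paper writes, but note that the identification $T_n'\cong\Sigma^{-2}v_1T_n$ you flag as the main technical obstacle is simply \emph{stated} in the paper as an alternative definition of $T_n'$, so from the paper's point of view there is nothing left to verify.
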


Considering the effect of inverting $v_1$ allows us to conclude the following Corollary.

\begin{cor}
The canonical map $\THH(KO;KU)\to\THH(KU)$ is a weak equivalence.
\end{cor}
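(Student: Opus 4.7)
The plan is to reduce the corollary to the statement that the natural map $\THH(ko;ku)\to\THH(ku)$ induces an isomorphism on homotopy after inverting $v_1$, and then to verify this directly from the preceding proposition. The reduction follows the template of the $\ell\to L$ comparison in Section~\ref{sec:prelim}: since $KU$ is obtained from $ku$ by inverting $v_1$, and since localisation of modules over a commutative ring spectrum commutes with derived smash products, one has
\[
\THH(KU)\simeq v_1^{-1}\THH(ku),\qquad \THH(KO;KU)\simeq v_1^{-1}\THH(ko;ku).
\]
For the second equivalence one realises $KU$ as the localisation of $ku$, viewed as a $(ko\wedge ko^{op})$-module, at the two images of $v_1^2$; a parallel remark applies to $KO$ as a localisation of $ko$. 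Under these identifications the comparison map of the corollary is just $v_1^{-1}$ of the natural map $\THH(ko;ku)\to\THH(ku)$.

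Next I would apply the preceding proposition, which describes $\rTHH_*(ko;ku)$ inside $\rTHH_*(ku)$ as the $ku_*$-submodule generated by $v_1\lambda_1$, the torsion-free classes $2^k a_{2^k}$, and $v_1$ times each torsion pattern $T_n$. The modules $T_n$ are $v_1$-nilpotent by Theorem~\ref{thm:Torsion}, and the torsion in $\rTHH_*(ku)$ itself is likewise $v_1$-nilpotent, so these summands vanish on both sides after inverting $v_1$. The comparison thereby reduces to the two torsion-free summands.

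On the $ku$ side, Theorem~\ref{thm:v1Divisibility} identifies the torsion-free summand as $F\cdot\lambda_1$ with $F$ containing the divided generators $v_1^{p^k+\dots+p}/p^k$ for $k\geq 1$. Multiplying these generators by arbitrary negative powers of $v_1$ yields $v_1^{-1}F=\Q[v_1^{\pm 1}]$, and hence $v_1^{-1}(F\cdot\lambda_1)=\Q[v_1^{\pm 1}]\cdot\lambda_1$. On the $ko$ side, the torsion-free extensions computed in Section~\ref{sec:SSB4} identify each class $2^k a_{2^k}$ in $\rTHH_*(ko;ku)$ with a corresponding divided generator of $F$ inside $\rTHH_*(ku)$; consequently the image of the torsion-free summand already contains $v_1\lambda_1$ together with all of these divided generators, and inverting $v_1$ again produces $\Q[v_1^{\pm 1}]\cdot\lambda_1$. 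The map of the corollary is therefore an isomorphism on $\pi_*$.

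The main obstacle is the localisation identification of the first paragraph, namely $\THH(KO;KU)\simeq v_1^{-1}\THH(ko;ku)$. This requires generalising the $\ell\to L$ argument to the $ko$ setting: one must realise $KU$ as a localisation of $ku$ as a $(ko\wedge ko^{op})$-module at the two images of $v_1^2$, and then check that this localisation commutes with the relative smash product defining $\THH(ko;ku)$, so that inverting $v_1$ really does compute the relative $\THH$ of the periodic spectra.
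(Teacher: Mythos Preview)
Your proposal is correct and follows essentially the same approach as the paper. The paper's own argument is a single sentence—``Considering the effect of inverting $v_1$ allows us to conclude the following Corollary''—and you have simply spelled out what that sentence means: identify both sides as $v_1$-localizations via the same mechanism used for $\ell\to L$ in Section~\ref{sec:prelim}, then read off from the preceding Proposition that the inclusion $\rTHH_*(ko;ku)\hookrightarrow\rTHH_*(ku)$ becomes an isomorphism once the $v_1$-torsion patterns $T_n$ and $T_n'$ are killed and the torsion-free summands $F'\cdot\lambda_1'\hookrightarrow F\cdot\lambda_1$ are both rationalized.
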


In particular, a homotopy-fixed point / Galois descent argument allows us to determine $\THH(KO)$.

\begin{cor}
As a $KO$-module,
\[
\THH(KO)=KO\vee\Sigma KO_\Q.
\]
\end{cor}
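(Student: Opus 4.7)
The plan is to invoke Galois descent, combining the previous corollary $\THH(KO;KU)\simeq\THH(KU)$ with the fact that $KO\to KU$ is a faithful $\Z/2$-Galois extension of commutative $S$-algebras whose Galois group is generated by complex conjugation.

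The first step is the descent identification. Since $KO\to KU$ is Galois, for any $KO$-module $M$ one has $M\simeq(KU\sma_{KO}M)^{h\Z/2}$. Taking $M=\THH(KO)$ gives
\[
\THH(KO)\simeq\THH(KO;KU)^{h\Z/2}\simeq\THH(KU)^{h\Z/2},
\]
where the second equivalence uses the previous corollary and the $\Z/2$-action on $\THH(KU)$ is the one induced by functoriality of $\THH$ from complex conjugation on $KU$.

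The second step feeds in $\THH(KU)\simeq KU\vee\Sigma KU_\Q$, established earlier in the paper by inverting $v_1$ in $\THH(\ell)$ (with $\ell=ku_{(2)}$ at $p=2$). Because the splitting $\THH(R)\simeq R\vee\rTHH(R)$ is natural in commutative $R$, it is $\Z/2$-equivariant, so homotopy fixed points may be taken summand by summand. On the first summand, $KU^{h\Z/2}\simeq KO$ is the defining property of the Galois extension. For the second summand, use the rational equivalence $KU_\Q\simeq\prod_{i\in\Z}\Sigma^{2i}H\Q$; complex conjugation sends the Bott class $\beta$ to $-\beta$, hence acts on the $i$-th factor as multiplication by $(-1)^i$. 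Rationally the homotopy fixed point spectral sequence collapses and the sign-twisted factors are killed, leaving
\[
(KU_\Q)^{h\Z/2}\simeq\prod_{j\in\Z}\Sigma^{4j}H\Q\simeq KO_\Q.
\]
Since suspension commutes with homotopy fixed points, $(\Sigma KU_\Q)^{h\Z/2}\simeq\Sigma KO_\Q$, and assembling the two summands gives $\THH(KO)\simeq KO\vee\Sigma KO_\Q$.

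The main technical input is the Galois descent equivalence $\THH(KO)\simeq\THH(KU)^{h\Z/2}$; once this is in place (via Rognes's framework for Galois extensions together with the preceding corollary), the homotopy fixed point calculation is essentially formal because the non-$KO$ summand is rational, so only the sign of complex conjugation in each degree needs to be tracked.
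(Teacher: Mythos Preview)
Your argument is correct and follows exactly the approach the paper indicates: the paper states this corollary without proof, remarking only that ``a homotopy-fixed point / Galois descent argument allows us to determine $\THH(KO)$,'' and you have supplied precisely those details. The key inputs---Rognes's Galois descent for $KO\to KU$, the previous corollary identifying $\THH(KO;KU)\simeq\THH(KU)$ equivariantly, and the splitting $\THH(KU)\simeq KU\vee\Sigma KU_\Q$ from the earlier $\THH(L)$ computation---are exactly what the paper has in mind.
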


The homotopy of $\rTHH(ko;ku)$ is depicted through degree $35$ in Figure~\ref{fig:THHkoku}.

\begin{figure}[ht]
\centering
\includegraphics[width=.9\textwidth]{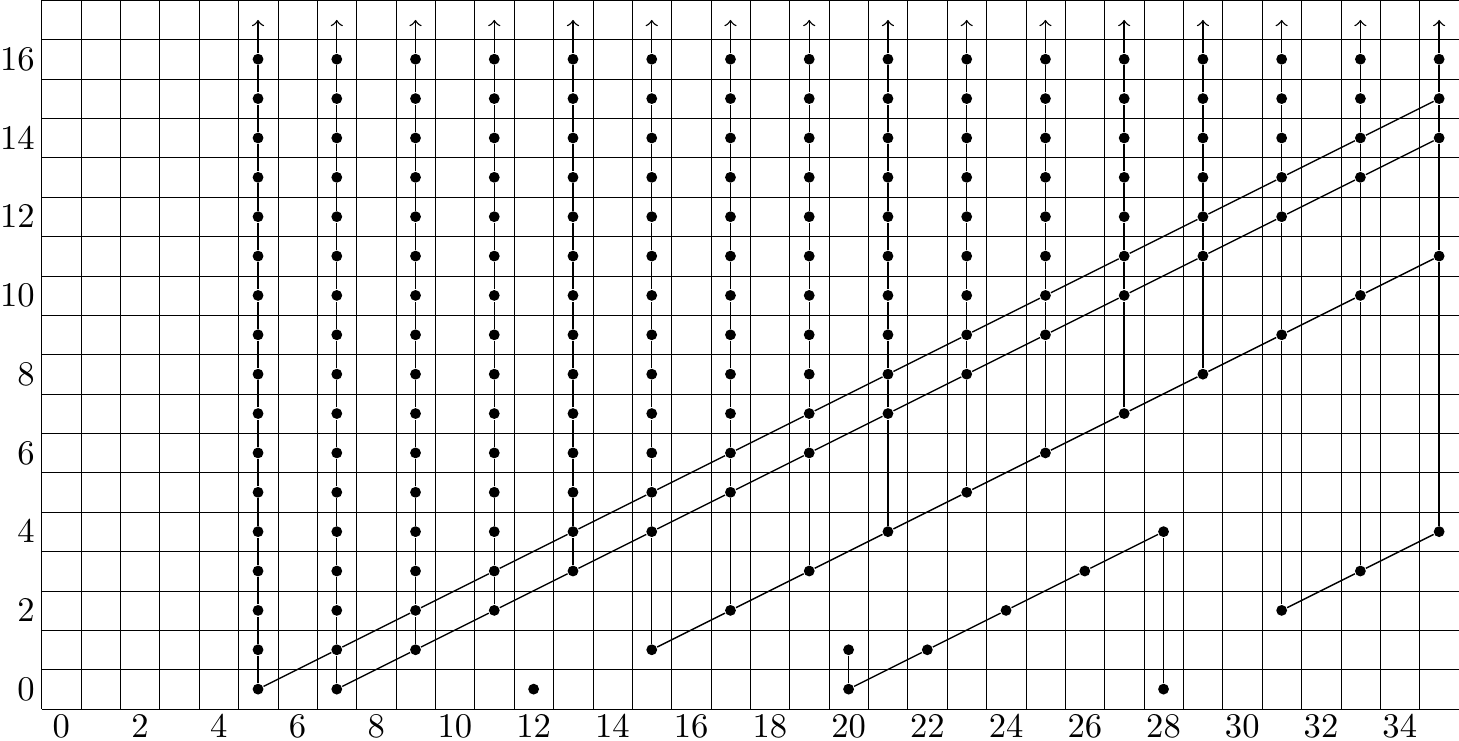}
\caption{The homotopy of $\rTHH(ko;ku)$}
\label{fig:THHkoku}
\end{figure}

\subsection{The $\eta$-Bockstein spectral sequence}
We first review the $\eta$-Bockstein spectral sequence $E_1(ku)=ku_*[\eta] \Rightarrow ko_*$. In this case we have
\[
d_1(v_1)=2\eta,\, d_1(v_1^2)=0,\text{ and }d_3(v_1^2)=\eta^3.
\]
In particular, $E_2=\Z[v_1^2,\eta]/2\eta$ has infinite $\eta$-towers off of every power of $v_1^2$, and $E_4$ equals $E_\infty$. There are no extensions, and $E_\infty$ is the homotopy of $ko$.

Let us write Spectral Sequence (\ref{eq:E_1ko}) as
\[
E_1=\Sigma^5 E_1(F')\oplus\bigoplus_{n \geq 1}\Sigma^{2^{n+3}+4}E_1(T_n'),
\]
where $E_1(F')=F'[\eta]$ and $E_1(T_n')=T_n'[\eta]$.

\begin{prop}\label{prop:etasplitting}
The splitting
\[
E_1=\Sigma^5 E_1(F')\oplus\bigoplus_{n \geq 1}\Sigma^{2^{n+3}+4}E_1(T_n')
\]
is a direct sum of differential graded modules over $E_1(ku)$. Moreover, there are no differentials connecting the summands $E_1(T_n')$ and $E_1(T_m')$ for $n\neq m$.
\end{prop}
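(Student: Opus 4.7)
The plan has three stages. First, the splitting as $E_1(ku)$-modules is immediate from Theorems~\ref{thm:v1Divisibilityko} and \ref{thm:Torsionko}: tensoring the $ku_*$-module direct sum decomposition of $\rTHH_*(ko;ku)$ with $\Z[\eta]$ preserves direct summands. The substance is then that each summand is closed under the spectral sequence differentials $d_r$. Because $\eta^3 = 0$ in $ko_*$ gives a horizontal vanishing line at filtration $3$, only $d_1$, $d_2$, and $d_3$ can be nonzero.

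For the ``moreover'' clause (no differentials connecting $E_1(T_n')$ and $E_1(T_m')$ for $n\neq m$), I would run a direct topological-degree check. The $ku_*$-generators of $\Sigma^{2^{n+3}+4}T_n'$ all have topological degree in the interval $I_n := [2^{n+3}+4,\, 2^{n+4}-4]$, and these intervals are pairwise disjoint. If $y\cdot \eta^k$ is a class with $y$ a generator of $\Sigma^{2^{n+3}+4} T_n'$, then any $d_r$-image has topological degree $|y|+k-1$ and $\eta$-filtration $k+r$, so if it were an $\eta^{k+r}$-multiple of a generator $z$ of $\Sigma^{2^{m+3}+4}T_m'$, we would need $|z| = |y|-1-r \in I_m$. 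For $r \in \{1,2,3\}$ and $|y| \in I_n$, this places $|z|$ in $[2^{n+3},\, 2^{n+4}-6]$, which is disjoint from every $I_m$ with $m \neq n$.

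To separate the $F'$ summand from the $T_n'$ summands, I would combine a parity argument with a multiplicative one. Classes in $\Sigma^5 F'$ lie in odd topological degrees (since $F'$ sits in even degrees and the shift is by $5$), while $\Sigma^{2^{n+3}+4}T_n'$ lies in even degrees; since $d_r$ flips topological parity and $\eta$ contributes parity $1$ per power, a $d_r$-differential mixing these two types of summands forces $r$ to be even, leaving only $d_2$. To rule out $d_2$ mixings, I would use that $\lambda_1' \in \rTHH_5(ko;ku)$ is a permanent cycle---it lifts to the generator of the $\Z$-summand in $\Sigma^5 F^{ko} \subset \rTHH_5(ko)$---so that on each page $E_r$ for $r \geq 2$ (where the Bockstein and Adams spectral sequences coincide and the multiplication is genuine), the Leibniz rule together with $d_r(\lambda_1') = 0$ keeps $d_r$-images of $F'$-classes (generated by $\lambda_1'$ and its divided multiples) inside the $F'$-summand. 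A complementary argument, using that the generators of each $T_n'$ arise as products of $\lambda_1'$ with $a_i'$-type classes lifted from $\rTHH_*(ko;H\Z)$, then keeps $d_2$-images of $T_n'$-classes inside $T_n'$.

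The main obstacle is this $d_2$-step: the parity argument handles $d_1$ and $d_3$ cleanly, but $d_2$ is parity-compatible with mixing the $F'$ and $T_n'$ summands and therefore requires the more delicate multiplicative analysis on $E_2$ via the permanent cycle $\lambda_1'$.
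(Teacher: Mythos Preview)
You have misread the proposition. The first sentence asserts only that the displayed decomposition is a direct sum of differential graded modules over $E_1(ku)$; the differential on $E_1$ is $d_1$, so this is purely a statement about $d_1$. The paper's proof is the parity argument you already gave: elements of $\Sigma^5 F'$ lie in odd degrees and elements of the torsion summands lie in even degrees, so $d_1$ (and incidentally $d_3$) cannot mix them. The ``moreover'' clause is the only claim about higher differentials, and it concerns only the $T_n'$--$T_m'$ interactions; your degree-gap argument for that part is essentially the paper's.

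Your attempt to rule out $d_2$-differentials between the $F'$ and $T_n'$ summands is not just unnecessary but wrong: such differentials exist. Theorem~\ref{thm:koDifferentials} records
\[
d_2(b_{2^n}') \;=\; \eta^2\,\frac{v_1^{2^{n+2}-2}}{2^{n+1}}\,\lambda_1',
\]
a nonzero $d_2$ from a $T_n'$-class to an $F'$-class. Your proposed Leibniz argument cannot succeed here: the ``divided multiples'' $v_1^k/2^j\cdot\lambda_1'$ in $F'$ are not products of elements in $E_r(ku)$ with $\lambda_1'$, so the Leibniz rule does not constrain their differentials; and writing $b_i' = \lambda_1' a_i'$ and applying Leibniz gives $d_2(b_i') = \lambda_1'\, d_2(a_i')$, which can perfectly well land in $\lambda_1'\cdot(\text{even part}) \subset \Sigma^5 F'$. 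So the ``complementary argument'' you sketch proves the opposite of what you want. Drop the entire $d_2$ discussion; the proposition does not assert it and it is false.
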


\begin{proof}
Let $T'=\bigoplus\Sigma^{2^{n+3}+4}T_n'$. Then for all $x\in T'$, the degree of $x$ is even, while for every $y\in \Sigma^5F'$, the degree of $y$ is odd. Since $d_1$-differentials change parity, there are no $d_1$-differentials connecting $E_1(T')$ and $E_1(\Sigma^5 F)$. (Similarly, there are no possible $d_3$-differentials connecting these summands.)

For the second part, we again argue by degrees. The analysis of the even dimensional homotopy of $\rTHH(ko;ku)$ shows that between dimensions $2^{n+3}-2$ and $2^{n+3}+2$, $\rTHH_{even}(ko;ku)=0$. In particular, any differential connecting $\Sigma^{2^{n+3}+4}T_n'$ and $\Sigma^{2^{n+2}+4}T_{n-1}'$ must be a $d_{>3}$.
\end{proof}

\begin{thm}
In $E_1(\Sigma^5 F')$, we have $d_1$-differentials determined by
\[
d_1 \left( \frac{v_1^{2^{n+1}-3}}{2^n} \lambda_1' \right) = \eta \frac{v_1^{2^{n+1}-4}}{2^{n-1}} \lambda_1'.
\]
\end{thm}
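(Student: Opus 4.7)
The plan is to use the fact that the $\eta$-Bockstein spectral sequence is a derivation spectral sequence over $ku_*[\eta]$, with the base differential $d_1(v_1)=2\eta$. The full $E_1$-page $\THH_*(ko;ku)[\eta]$ is a module over $ku_*[\eta]$, and $d_1$ satisfies the Leibniz rule with respect to this action. Combined with the explicit multiplicative description of $F'$, this will determine $d_1$ on all of $F'\cdot\lambda_1'$.

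As a preparatory step I would verify that $d_1(\lambda_1')=0$ by degree considerations: its potential target lies in $\eta\cdot\rTHH_3(ko;ku)$, which vanishes because Theorems~\ref{thm:v1Divisibilityko} and~\ref{thm:Torsionko} show $\rTHH_*(ko;ku)$ is concentrated in degrees $\geq 5$ (the torsion-free summand $F'\cdot\lambda_1'$ beginning with $\lambda_1'$) and $\geq 12$ (the torsion summands $T_n'$).

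I would then induct on $n$, setting $x_n := \frac{v_1^{2^{n+1}-3}}{2^n}\lambda_1'$ and using the relations $2x_1 = v_1\lambda_1'$ and $2x_n = v_1^{2^n}x_{n-1}$ for $n\geq 2$ inside $F'\cdot\lambda_1'$. The base case is immediate from Leibniz: $2d_1(x_1) = d_1(v_1)\lambda_1' + v_1 d_1(\lambda_1') = 2\eta\lambda_1'$, giving $d_1(x_1)=\eta\lambda_1'$. For the inductive step, applying $d_1$ to the relation and expanding using Leibniz yields
\[
2d_1(x_n) = 2^{n+1}\eta\, v_1^{2^n-1}x_{n-1} + v_1^{2^n}d_1(x_{n-1}).
\]
Substituting $x_{n-1}=\frac{v_1^{2^n-3}}{2^{n-1}}\lambda_1'$ and the inductive formula for $d_1(x_{n-1})$, both summands become $ku_*$-multiples of $\eta\cdot\frac{v_1^{2^{n+1}-4}}{2^{n-1}}\lambda_1'$. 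Collecting coefficients yields the claimed formula up to a 2-local odd unit, which can be absorbed by an appropriate choice of lifts of the divided generators.

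The main obstacle will be ensuring that the division by $2$ at each inductive step is unambiguous. Since $\rTHH_{2^{n+2}-3}(ko;ku)$ is in odd total degree and the torsion modules $T_n'$ live entirely in even degrees, this group sits in the torsion-free summand $F'\cdot\lambda_1'$. Consequently $\eta\cdot\rTHH_{2^{n+2}-3}(ko;ku)\subset E_1$ is torsion-free, so $d_1(x_n)$ is uniquely determined by $2d_1(x_n)$, and the induction goes through cleanly.
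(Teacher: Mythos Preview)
Your proposal is correct and follows essentially the same approach as the paper. The paper's proof is a two-line argument: $\lambda_1'$ is a permanent cycle for degree reasons, and then ``the result follows immediately from the $E_1(ku)$-differential graded module structure.'' Your induction via the relations $2x_n = v_1^{2^n}x_{n-1}$ and the Leibniz rule is exactly an explicit unwinding of that DG-module argument, and your observation that the target groups are torsion-free (so division by $2$ is unambiguous) is the one point that needs checking. The odd unit you obtain is unavoidable with the canonical generators and is consistent with the paper's convention of working up to $p$-local units.
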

\begin{proof}
For degree reasons, $\lambda_1'$ is a permanent cycle. The result follows immediately from the $E_1(ku)$-differential graded module structure.
\end{proof}

Note that this leaves the $\eta$-towers on classes
\[
\frac{v_1^{2^{n+1}-2+2i}}{2^{n}} \lambda_1'
\]
untruncated for each $n \geq 1$ and $0 \leq i \leq 2^{n}-2$. These classes link the modules $E_2(\Sigma^5 F')$ and $E_2(T')$, and, as we shall see, are permanent cycles. Some of these are easy to see, however.

\begin{cor}\label{cor:d2cycles}
The classes $\frac{v_1^{2^{n+1}-2}}{2^n} \lambda_1'$ are permanent cycles, and so there are no differentials from the torsion free summand to the torsion summands.
\end{cor}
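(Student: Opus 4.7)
The plan is to rule out all nonzero $d_r$-differentials for $r \geq 2$ emanating from $x_n := \frac{v_1^{2^{n+1}-2}}{2^n}\lambda_1'$. Since $\eta^3 = 0$ in $ko_\ast$ forces the vanishing line at filtration $3$, and the $d_1$-differentials have been handled by the previous theorem, only $d_2$ and $d_3$ can possibly fire.

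First I would locate the class: $x_n$ sits in total degree $2^{n+2}+1$ at Bockstein filtration $0$. A potential $d_r$-differential for $r \in \{2,3\}$ therefore has target in total degree $2^{n+2}$ at filtration $r$. Since $E_2(\Sigma^5 F')$ is supported in odd total degrees (its generators $\lambda_1'$ and $\frac{v_1^{2^{n+1}-2+2i}}{2^n}\lambda_1'$ are all odd), any such target must lie in the torsion summand $E_r(T')$. This identifies the potential obstructions precisely as differentials from the torsion-free to the torsion summand, matching the two claims of the corollary.

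The key computational step is to verify that $E_2(T')$ vanishes in even total degree $2^{n+2}$ at filtration at least $2$. I would proceed in two stages. First, the shifts $\Sigma^{2^{m+3}+4}T_m'$ with $m \geq n$ are concentrated in total degrees $\geq 2^{m+3}+4 > 2^{n+2}$, so only $m \leq n-1$ can contribute. Second, for $m \leq n-1$, I would use the recursive description of $T_m'$ from Theorem~\ref{thm:Torsionko} together with the $d_1$-action induced by $d_1(v_1)=2\eta$ to enumerate surviving classes at filtration $\geq 2$ and verify that none appears in total degree $2^{n+2}$. This is the $ko$-analogue of the gap argument of Lemma~\ref{lem:homotopygaps}.

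For the second statement, once $x_n$ is a permanent cycle, its $\eta$-multiples are permanent cycles by $\eta$-linearity. The remaining generators $\frac{v_1^{2^{n+1}-2+2i}}{2^n}\lambda_1'$ for $1 \leq i \leq 2^{n}-2$ and $\lambda_1'$ itself sit in total degrees still further from the torsion patterns (or in very low degree, for $\lambda_1'$), and the same enumeration rules out all their potential $d_r$-targets. The main obstacle is the bookkeeping in the second stage above: unpacking the recursive structure of each $T_m'$ and tracking the effect of $d_1$ on its $\eta$-tower with enough precision to establish vanishing in every relevant bidegree.
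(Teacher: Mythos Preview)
Your approach to the first claim is close to the paper's but takes an unnecessary detour, and your approach to the second claim has a genuine gap.

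On the first claim: you correctly observe that a $d_2$-target for $x_n$ would be of the form $\eta^2 y$ with $y$ in the even-degree torsion part of $\rTHH_*(ko;ku)$ in degree $2^{n+2}-2$. You propose to compute $E_2(T')$ via the $d_1$-action, but this is unnecessary (and the $d_1$'s on $T'$ are only established \emph{after} this corollary, in Theorem~\ref{thm:koDifferentials}). The gap already recorded in the proof of Proposition~\ref{prop:etasplitting}---that $\rTHH_{even}(ko;ku)$ vanishes between $2^{m+3}-2$ and $2^{m+3}+2$, applied with $m=n-1$---shows the relevant $E_1$-group is already zero; the paper does exactly this degree check. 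There is also a parity slip: a $d_3$-target at filtration $3$ is $\eta^3 z$ with $z$ in \emph{odd} degree $2^{n+2}-3$, hence $z \in \Sigma^5 F'$, not in $T'$. So $d_3$ stays within the torsion-free summand (cf.\ Proposition~\ref{prop:etasplitting}) and must be ruled out separately; your claim that ``any such target must lie in $E_r(T')$'' is false for $r=3$.

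On the second claim, your assertion that the remaining generators $\frac{v_1^{2^{n+1}-2+2i}}{2^n}\lambda_1'$ for $1 \le i \le 2^n-2$ ``sit in total degrees still further from the torsion patterns'' is simply false. These classes lie in degrees $2^{n+2}+4i+1$, which for $i \ge 1$ fall inside the range $[2^{n+2}+4,\,2^{n+3}-4]$ occupied by $\Sigma^{2^{n+2}+4}T_{n-1}'$; for example, $v_1^4 x_n$ in degree $2^{n+2}+9$ has the potential $d_2$-target $\eta^2 v_1 b_{2^{n-1}}'$, which is nonzero in $E_1$. A direct degree enumeration therefore cannot work uniformly. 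The paper's argument is different and essential here: it uses that $v_1^2$ is a $d_1$- and $d_2$-cycle in $E_*(ku)$, so multiplication by $v_1^2$ commutes with $d_1$ and $d_2$. Since every surviving filtration-$0$ class in $E_2(\Sigma^5 F')$ is a $v_1^{2k}$-multiple of some $x_m$, and each $x_m$ is a $d_2$-cycle by the first part, all of them are $d_2$-cycles. Combined with the parity observation that only $d_2$ can pass from the torsion-free to the torsion summand, this finishes the argument. This $v_1^2$-linearity is the key idea your proposal is missing.
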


\begin{proof}
The first part is an immediate degree check. The class $\frac{v_1^{2^{n+1}-2}}{2^n}\lambda_1'$ is in degree $2^{n+2}+1$. By the previous theorem, the closest $\eta$-torsion free class of smaller degree is $b_{2^{n-1}-1}'$, which is in degree $2^{n+2}-4$. Since the spectral sequence collapses at $E_4$, we cannot have any differentials originating on our class.

As a consequence, since $v_1^2$-multiplication commutes with $d_1$ and $d_2$-differentials, we learn that all of the $\eta$-torsion free classes in the torsion free part, the previously described $v_1^2$-multiples of $\frac{v_1^{2^{n+1}-2}}{2^n}\lambda_1'$, are $d_1$- and $d_2$-cycles. Proposition~\ref{prop:etasplitting} shows that the only possible differentials connecting the torsion and torsion free summands are $d_2$-differentials, so we conclude that there are no differentials from torsion free classes to torsion ones.
\end{proof}

\begin{thm}\label{thm:koDifferentials}
Using the module structure over the spectral sequence for $E_\ast(ku)$, the differentials in the torsion summands are determined by the following:
\begin{eqnarray*}
d_1 ( b_{a2^k}' ) & = & \eta \frac{a-1}{2} v_1^{2^{k+2}-1} b_{(a-1)2^k}' \text{ for $a \neq 1$ odd, and} \\
d_2 ( b_{2^n}' ) & = & \eta^2 \frac{v_1^{2^{n+2}-2}}{2^{n+1}} \lambda_1' .
\end{eqnarray*}
\end{thm}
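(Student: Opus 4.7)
The strategy mirrors the $\ell$ case: exploit the module structure of the $\eta$-Bockstein spectral sequence over $E_\ast(ku) = (ku_\ast[\eta]\Rightarrow ko_\ast)$ (where $d_1(v_1)=2\eta$ and $d_3(v_1^2)=\eta^3$) together with naturality under a cap-product pairing with classes in $\THC^\ast_S(ko)$. The requisite classes $c_j \in \THC^\ast_S(ko)$ are produced by rerunning the argument of Section~\ref{sec:THCku} with $ko$ in place of $\ell$ (comparing $\THC^\ast_S(ko;H\F_2)$ with an $MU$- or $BP$-relative version to see that the relevant divided-power generators lift).

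For the $d_1$-formula, the case $a=1$ asserts $d_1(b_{2^k}')=0$. Since any $d_1$ originating on $E_1(T_n')$ must stay inside $E_1(T_n')$ by the parity argument of Proposition~\ref{prop:etasplitting}, a bidegree count following the pattern of Lemma~\ref{lem:biCycles} rules out any non-zero target in the bidegree of $b_{2^k}'$. For $a \neq 1$ odd, I would first establish the $\THH(ko;ku)$-analog of Theorem~\ref{thm:TorsionExtensionsinFull}, namely
\[
2 \cdot b_{a 2^k}' \doteq v_0 \cdot b_{a 2^k}' + v_1^{2^{k+2}} v_0^{\nu_2((a-1)/2)} b_{(a-1) 2^k}',
\]
by the same cap-product machinery (using the $c_j$ and Kummer's Theorem for the binomial coefficients). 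Combined with the Leibniz rule $d_1(v_1 y)=v_1 d_1(y)+2\eta y$, this extension identifies $d_1(b_{a 2^k}')$ up to a $2$-adic unit with $\eta v_1^{2^{k+2}-1} b_{(a-1)2^k}'$; a second application of Kummer's Theorem to the cap-product coefficient fixes the numerical factor as $(a-1)/2$.

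For the $d_2$-formula, $b_{2^n}'$ is a $d_1$-cycle by the above, so it passes to $E_2$. By parity any $d_2$-target must land in $E_2(\Sigma^5 F')$, and a dimension count (invoking Corollary~\ref{cor:d2cycles} to enumerate the surviving $\eta$-towers in $\Sigma^5 F'$) shows that the only non-zero class in bidegree $(|b_{2^n}'|-1, 2)$ is $\eta^2 \cdot \frac{v_1^{2^{n+2}-2}}{2^{n+1}} \lambda_1'$. Non-vanishing of $d_2(b_{2^n}')$ is then forced by the extension asserted in Theorem~\ref{thm:THHko}: were $d_2(b_{2^n}') = 0$, $b_{2^n}'$ would survive to $E_\infty$ (no room for $d_3$ by parity), making $2b_{2^n}'$ vanish on $E_\infty$, contradicting the required integral extension linking the lowest-dimensional generator of $\Sigma^{2^{n+4}-6} D\tilde T_n$ with a class in $\Sigma^5 F^{ko}$. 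Normalizing lifts makes the coefficient exactly $1$.

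The main obstacles are twofold. First, establishing the $\THH(ko;ku)$-analog of Theorem~\ref{thm:TorsionExtensionsinFull} demands rerunning the Section~\ref{sec:SSB4} machinery with bookkeeping adjusted for the $v_1^2$-divisibility governed by Theorem~\ref{thm:v1Divisibilityko} rather than the plain $v_1$-divisibility of Theorem~\ref{thm:v1Divisibility}; this is where the formulas $2^{k+2}-1$ and $\nu_2((a-1)/2)$ first emerge. Second, pinning down the coefficients \emph{exactly} rather than merely up to $2$-adic units requires a disciplined, consistent normalization of lifts across the cap-product computations --- the same subtlety encountered in Sections~\ref{sec:SSB3}--\ref{sec:SSB4}, but now compounded by the presence of two simultaneously-relevant Bocksteins (the $v_0$-Bockstein controlling the extension and the $\eta$-Bockstein carrying the differential).
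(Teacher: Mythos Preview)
Your argument for non-vanishing of $d_2(b_{2^n}')$ is circular. You appeal to the hidden extension in Theorem~\ref{thm:THHko} to force the differential, but in the paper's logic Theorem~\ref{thm:THHko} is a \emph{consequence} of Theorem~\ref{thm:koDifferentials}: the differentials here determine the $E_\infty$-page, and only afterwards (in the ``Resolving the Extensions'' subsection) is the hidden multiplication-by-$2$ linking $D\tilde T_n$ to $F^{ko}$ established---precisely by tracing the $d_1$ and $d_2$ on $b'_{3\cdot 2^{n-1}}$ from this theorem back through the exact couple. So that extension is unavailable to you here.

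The paper's non-vanishing mechanism is entirely different and self-contained: it is the vanishing line coming from $\eta^3=0$ in $ko_*$. One inducts on $i$; for each $b_i'$ one enumerates every class in the target bidegrees for $d_1,d_2,d_3$, shows (using the inductive hypothesis and the $E_*(ku)$-module structure) that all but one of them are already hit by differentials supported on $v_1^2$-divisible classes, and then argues that $b_{a2^k}'$ cannot be a permanent cycle because $\eta^3 v_1^{2^{k+2}-4} b_{a2^k}'$ would then survive yet lies above the vanishing line, and nothing remains to kill it. Your proposal has no substitute for this $\eta^3$-obstruction.

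Your $d_1$ argument via the Leibniz rule and the hidden $v_0$-extension in $\THH_*(ko;ku)$ is also not a proof as written. The relation $2\,b_{a2^k}' = v_0 b_{a2^k}' + v_1^{2^{k+2}} v_0^{\nu} b_{(a-1)2^k}'$ is an identity among chosen lifts in $\THH_*(ko;ku)$; applying $d_1$ to it does not close up, because $v_0 b_{a2^k}'$ is a \emph{specific} element (not literally $2 b_{a2^k}'$), so you do not know $d_1(v_0 b_{a2^k}')$ independently. Nor is $b_{a2^k}'$ itself $v_1$-divisible in $\THH_*(ko;ku)$---the identification $b_i'\mapsto v_1 b_i$ lives in $\THH_*(ku)$, not here---so the Leibniz rule cannot be applied to it directly. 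In fact the paper uses neither cap products nor the hidden $v_0$-extensions anywhere in this proof; the whole argument is the induction/enumeration/$\eta^3$-obstruction described above.
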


\begin{proof}
We prove that $d_1$ or $d_2$ on $b_i'$ is as claimed by induction on $i$. Write $i$ as $a2^k$ with $a$ odd. We begin by listing all the possible differentials on $b_{a2^k}'$ by considering all classes in degree $|b_{a2^k}'|-1$.

We first consider $d_1$ and $d_3$-differentials. By Proposition~\ref{prop:etasplitting}, we know that these all take place within a single torsion summand. Here the analysis of the structure of the torsion summands allows us to quickly enumerate classes. For each class $b_i'$, the only classes in degree $|b_i'|-2$ and $|b_i'|-4$ are the appropriate $v_1$-multiples of those classes which arise in the hidden multiplication-by-2 extensions seen in Theorem~\ref{thm:TorsionExtensionsinFull}. Since $p=2$, essentially every class had non-trivial extensions and the analysis is substantially simplified.

For each $m \geq 1$ such that $a+1\equiv 0$ modulo $2^m$ and $a \neq 2^m-1$, there is a possible $d_1$-differential
\[
d_1(b_{a2^k}') = \eta \frac{a+1-2^m}{2^m} v_1^{(2^m-1)2^{k+2}-1} b_{(a+1-2^m)2^k}'.
\]
and a possible $d_3$-differential
\[
d_3(b_{a2^k}') = \eta^3 \frac{a+1-2^m}{2^m} v_1^{(2^m-1)2^{k+2}-2} b_{(a+1-2^m)2^k}'.
\]
The possible targets are also the only classes in the appropriate degree whose $v_1^2$-order is less than or equal to that of $b_i'$. We remark in passing that the coefficients seen here are, up to a $2$-adic unit, the same integers as the powers of $v_0$ seen in Theorem~\ref{thm:TorsionExtensionsinFull}.

The $d_2$-differentials connect the torsion and torsion free summands. A counting check shows that if $n$ is such that $2^n \leq a2^k \leq (2^{n+1}-1)$, there is a possible $d_2$ differential
\[
d_2(b_{a2^k}') = \eta^2 \frac{v_1^{a 2^{k+2}-2}}{2^{n+1}}\lambda_1'.
\]

We now prove the result by induction on $i$. The base case of $i=1$ is immediate from the collapse of the spectral sequence at $E_4$. Now assume that the differentials are as listed for all $j<i$. We will first show that there is only one possible differential on $b_i'$ whose target is non-zero modulo the differentials implied by the induction hypothesis, and we will then show that $b_i'$ cannot be a permanent cycle, even if corrected by $v_1^2$-divisible terms. This will show that up to a different basis, the differentials are as described.

By the final part of Proposition~\ref{prop:etasplitting}, if $a=1$, so $i=2^n$ for some $n$, there is always only one possible non-trivial differential:
\[
d_2(b_{2^n}') = \eta^2 \frac{v_1^{2^{n+2}-2}}{2^{n+1}} \lambda_1'.
\]

Now assume that $a>1$. If $m \geq 1$ with $a \neq 2^m-1$, then by the induction hypothesis, the class
\[
\eta^3 \frac{a-2^m+1}{2^m} v_1^{(2^m-1)2^{k+2}-2} b_{(a+1-2^m)2^k}'
\]
supports a $d_1$ or $d_2$-differential, since $b_{(a+1-2^m)2^k}'$ does and these differentials commute with $v_1^2$-multiplication. Thus there cannot be any $d_3$-differentials.

For $d_1$-differentials, if $m>1$ with $a \neq 2^m-1$, we find by induction that
\[
d_1(v_1^{(2^{m-1}-1)2^{k+2}} b_{(a-2^{m-1}+1)2^k}')=\eta \frac{a-2^m+1}{2^m} v_1^{(2^m-1)2^{k+2}-1} b_{(a-2^m+1)k}'.
\]
Similarly, we have
\[
d_2(v_1^{a2^{k+2}-2^{n+2}} b_{2^n}')=\eta^2 \frac{v_1^{a2^{k+2}-2}}{2^{n+1}} \lambda_1'.
\]
We therefore conclude that by adding $v_1$-divisible elements, we may assume without loss of generality that either $b_{a2^k}'$ is a permanent cycle or the differential is as described in the theorem.

Now suppose $b_{a2^k}'$ is an infinite cycle. Then it follows from the $E_*(ku)$-module structure that the top nonzero $v_1$-power on $b_{a2^k}'$, $v_1^{2^{k+2}-4} b_{a2^k}'$, is an infinite cycle as well. Hence $\eta^3 v_1^{2^{k+2}-4} b_{a2^k}'$ must be a boundary.

We consider all possible classes in degree $|\eta^3 v_1^{2^{k+2}-4} b_{a2^k}'|+1=(a+1)2^{k+3}$. The $v_1$-towers on $b_j'$ for $j<a2^k$ have already been accounted for by induction. Corollary~\ref{cor:d2cycles} shows that there can be no differentials from the torsion free summands, so we only need to consider the $v_1$-towers on $b_j'$ for $a2^k<j<(a+1)2^k$, for degree reasons. However, the $v_1$-towers on these classes, together with the $v_1$-tower on $b_{a2^k}'$, generate a copy of $T_k'$ starting in degree $a2^{k+3}+4$ and ending in degree $(a+1)2^{k+3}-4$. In particular, none of these can support a differential truncating $\eta$ on $v_1^{2^{k+2}-4} b_{a2^k}'$, and we conclude that $b_{a2^k}'$ must support the nontrivial differential.
\end{proof}

We present the $E_2$-page through dimension $35$ as Figure~\ref{fig:THHkoE2}. We remark that though we have drawn $v_1$ and $v_0$ in filtration $1$, in the Bockstein spectral sequence they have filtration $0$. Thus the differentials drawn are simply $d_2$-differentials. We remark also that classes drawn in black have filtration zero or $1$, while those drawn in gray have filtration at least two.

\begin{figure}[ht]
\centering
\includegraphics[width=.9\textwidth]{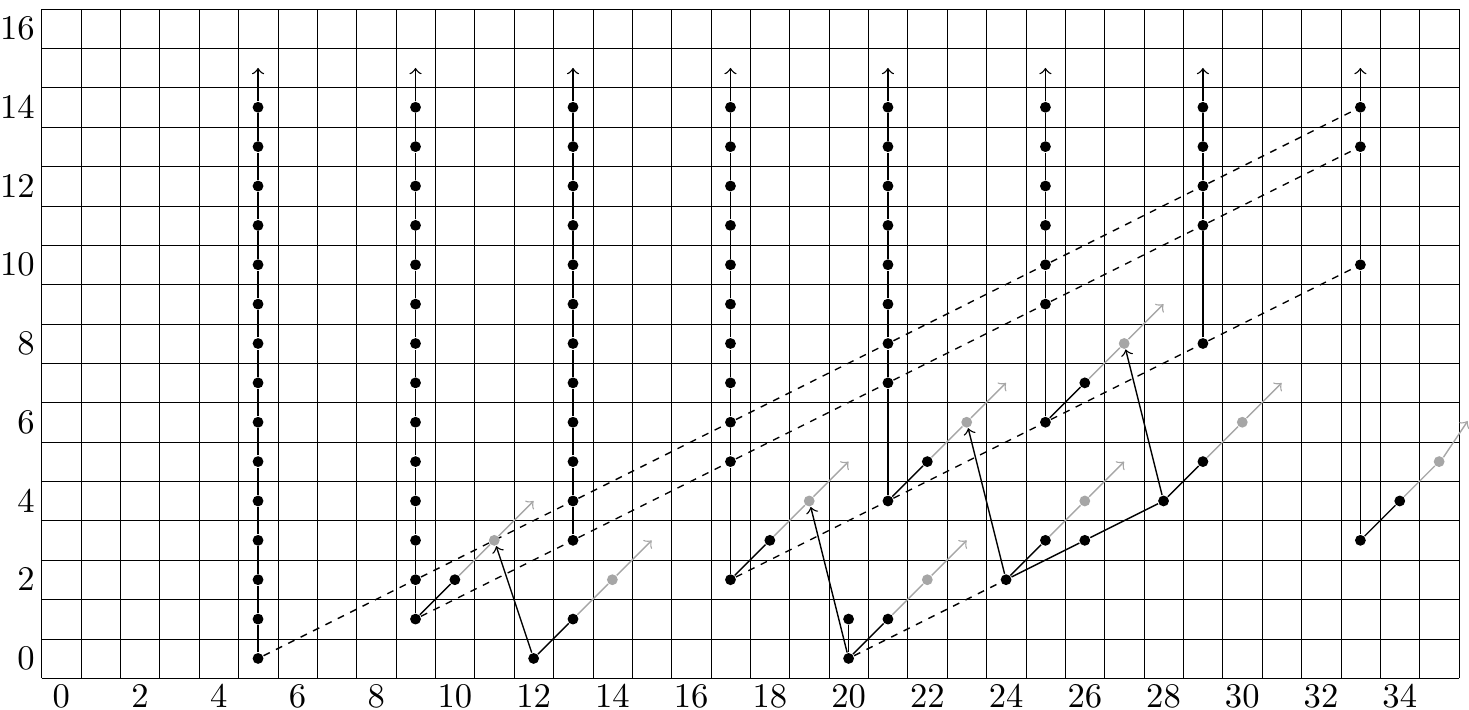}
\caption{The $\eta$-Bockstein $E_2$-page} \label{fig:THHkoE2}
\end{figure}

Note that the $E_\infty$ term consists of a direct sum of $F^{ko}$ with a sum of copies of $\tilde{T}_k$ and $D\tilde{T}_k$ for each $T'_n$, so this proves Theorem~\ref{thm:THHko} up to extensions. In particular, all classes in the spectral sequence are either $\eta$ or $\eta^2$ torsion. We remark that while it is in general tedious to name the generators of the summands of $T^{ko}_n$, the lowest degree class in the copy of $D\tilde{T}_n$ in $T^{ko}_n$ is represented by $v_1^{2^{n+1}-1}b_{2^n}'$. This element and its $v_1^2$-multiples are the sources of the hidden extensions.

\subsection{Resolving the Extensions}
The results about differentials show that $\rTHH_\ast(ko)$ is some extension of the direct sum of all of the torsion modules with $\Sigma^5 F^{ko}$. It remains only to solve this extension problem. By the structure of Bockstein spectral sequences, the target of an exotic multiplication by $2$ or $v_1^4$ must be $\eta$-divisible.

\begin{lemma}
The only possible additive extensions are hidden multiplications-by-$2$ connecting $\Sigma^{2^{n+4}-6} D\tilde{T}_n\subset T_n'$ and $F^{ko}\cdot\lambda_1'$ for $n \geq 1$.
\end{lemma}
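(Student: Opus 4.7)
The plan is to exploit the defining feature of a Bockstein spectral sequence: filtration corresponds to divisibility by the Bockstein element. Any hidden additive extension by a filtration-zero operator (here $2$ or $v_1^4$) must raise filtration, so its target is $\eta$-divisible on $E_\infty$. This reduces the problem to cataloguing the $\eta$-divisible classes on $E_\infty$ and performing a degree-matching analysis for sources.

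I would first catalogue the $\eta$-divisible classes. Using the splitting
\[ E_\infty = \Sigma^5 F^{ko} \oplus \bigoplus_{n \geq 1} \Sigma^{2^{n+3}+4} T_n^{ko}, \]
I would argue that the torsion summands $\tilde T_k$ and $D\tilde T_k$ comprising $T_n^{ko}$ carry trivial $\eta$-action at $E_\infty$. This is consistent with their description as $\Z[v_1^2]$-modules and is verified by observing that the $d_1$- and $d_2$-differentials of Theorem~\ref{thm:koDifferentials} collectively annihilate all $\eta$-multiples of the $b'_{a2^k}$-classes and their $v_1$-powers that land in the torsion summands. Consequently, every $\eta$-divisible class on $E_\infty$ lies in the $\Z[\eta]/(2\eta,\eta^2)$-summands of $\Sigma^5 F^{ko}\cdot\lambda_1'$, concentrated in degrees $4i+6$ with $i\neq 2^m-2$.

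I would then match up potential sources. A hidden $2$-extension preserves degree, so its source sits in the same degree as its target. Scanning the even-degree generators of each $T_n^{ko}$ shows that the only candidates are the lowest-degree class of $\Sigma^{2^{n+4}-6}D\tilde T_n$, represented by $v_1^{2^{n+1}-1}b_{2^n}'$, along with its $v_1^2$-multiples, all of which are $2$-torsion on $E_\infty$. Hidden $v_1^4$-extensions are ruled out because the $v_1^4$-action on $E_\infty$ inherited from the $E_1$-page $ku$-module structure already records the $\beta$-action up to higher filtration corrections, and $\eta^2=0$ on $\rTHH(ko)$ limits such corrections to $\eta$-filtration $1$, reducing any hidden $v_1^4$-extension to one of the hidden $2$-extensions already enumerated. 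The main obstacle will be the first step, carefully verifying that every $\eta$-multiple lurking in the torsion summands of $E_\infty$ is killed by the differentials of Theorem~\ref{thm:koDifferentials}, so that only the $\Sigma^5 F^{ko}$-pieces remain as possible extension targets.
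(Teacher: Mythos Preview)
Your first step is fine and matches the paper: targets of hidden extensions must be $\eta$-divisible, and after the differentials of Theorem~\ref{thm:koDifferentials} all $\eta$-divisible classes on $E_\infty$ live in $\Sigma^5 F^{ko}$, in degrees congruent to $2$ mod $4$.

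The gap is in your second step. You assert that ``scanning the even-degree generators of each $T_n^{ko}$'' leaves only the terminal summand $\Sigma^{2^{n+4}-6}D\tilde T_n$ as a possible source. But \emph{every} $D\tilde T_{\nu(k)+1}$ summand in $T_n^{ko}$, for $1\leq k\leq 2^{n-1}-1$, sits in degrees congruent to $2$ mod $4$ and overlaps the range occupied by the $\eta$-divisible classes of $\Sigma^5 F^{ko}$. A pure degree count does not distinguish these from the terminal $D\tilde T_n$; something further is needed to rule them out. The paper supplies this with a $v_1^4$-order argument: for $a$ in one of the smaller $D\tilde T_{\nu(k)+1}$, the $v_1^4$-order of $a$ is strictly less than that of any candidate target $\eta b\in F^{ko}$. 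Reducing to the case $v_1^4 a=0$ on $E_\infty$, a putative relation $2a=\eta b$ forces $v_1^4\eta b=2\cdot v_1^4 a$, and since the only higher-filtration class in that degree is $v_1^4\eta b$ itself, one obtains $(1-2c)v_1^4\eta b=0$ for some integer $c$, hence $v_1^4\eta b=0$. Faithfulness of $v_1^4$ on $F^{ko}$ in these degrees then gives $\eta b=0$. This is precisely the mechanism that fails for the terminal $D\tilde T_n$, which is why that summand survives as the only possible source. Your proposal omits this entirely.

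A secondary issue: your treatment of hidden $v_1^4$-extensions invokes $\eta^2=0$ on $\rTHH(ko)$, but that statement is a \emph{corollary} of Theorem~\ref{thm:THHko}, whose extension clause this lemma is meant to establish. Using it here is circular. In any case, the lemma as stated concerns only hidden multiplications by $2$; the $v_1^4$ discussion is extraneous unless you mean to fold it into ruling out the smaller $D\tilde T$ summands, in which case the paper's argument above is what you actually need.
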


\begin{proof}
The only $\eta$-divisible classes are in degrees congruent to $2$ modulo $4$. This rules out any exotic extensions originating on the summands of the form $\tilde{T}_{k}$ from any $T_n^{ko}$, and these therefore occur as direct summands of the homotopy. Similarly, there are no exotic extensions originating on the submodule $F^{ko}\cdot\lambda_1'$, since the target of these must be $\eta$-divisible.

We argue the remaining cases by considering the $v_1^4$-orders of the possible sources and targets of exotic multiplications by $2$. We consider the summands of the form $D\tilde{T}_{\nu(k)+1}$ in $T_n^{ko}$. For degree reasons, the $v_1^4$-order of every element in $D\tilde{T}_{\nu(k)+1}$ is strictly less than the $v_1^4$-order of the possible targets of exotic multiplications by $2$. Thus, if there were an exotic multiplication by $2$ on an element $a$, then we would necessarily have a complementary exotic multiplication by $v_1^4$. By replacing $a$ with its largest non-trivial $v_1^4$-multiple, we may assume without loss of generality that $v_1^4 a=0$. Then if $2\cdot a=\eta\cdot b$, we have
\[
v_1^4\cdot\eta\cdot b=v_1^4\cdot 2\cdot a=2\cdot v_1^4\cdot a=2\cdot v_1^4\cdot \eta\cdot b,
\]
since $v_1^4\cdot\eta\cdot b$ is the only possible non-trivial target of any hidden extensions in the same degree as $v_1^4 a$. Since $v_1^4$-multiplication is faithful in these degrees, we conclude there can be no such extension.
\end{proof}

We remark that the arguments employed in the previous lemma do not apply to elements from the summand $\Sigma^{2^{n+4}-6}D\tilde{T}_n\subset T_n'$, and in fact, here we see extensions.

\begin{thm}
There are hidden multiplications-by-$2$
\[
2\cdot v_1^{2^{n+1}-1+2k}b_{2^n}'=\eta v_1^{2^{n+1}+2k}\frac{v_1^{2^{n+2}-2}}{2^{n+1}}\lambda_1',
\]
for $0\leq k\leq 2^n-2$ and $n\geq 1$.
\end{thm}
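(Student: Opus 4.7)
The plan is to derive these extensions from the $d_2$-differential
\[
d_2(b_{2^n}') \doteq \eta^2 \tfrac{v_1^{2^{n+2}-2}}{2^{n+1}}\lambda_1'
\]
of Theorem~\ref{thm:koDifferentials}, together with the constraints from the preceding lemma. That lemma has already shown that the only possible hidden $2$-extensions must originate on the summand $\Sigma^{2^{n+4}-6}D\tilde{T}_n \subset T_n^{ko}$ and target $\Sigma^5F^{ko}\cdot\lambda_1'$; the remaining task is to prove the existence of these extensions and to identify their targets explicitly with the formulae in the statement.

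The core of the argument is a Toda-bracket extraction from the $d_2$-differential. In the $ku$-based $\eta$-Bockstein spectral sequence (equivalently, the $ko$-module Adams spectral sequence for $\rTHH(ko)$), a differential of shape $d_2(x)\doteq\eta^2 y$ encodes a Massey-product relation $\langle 2,\eta,-\rangle$ in the abutment $\rTHH_*(ko)$. Combined with the fundamental relation $d_1(v_1)=2\eta$ from the $\eta$-Bockstein for $ko_*$ itself, this translates into an equality of the form
\[
2\,\widetilde{v_1\!\cdot\! x} \doteq \eta\,\widetilde{y} \pmod{\text{higher } \eta\text{-filtration}}
\]
for chosen lifts to $\rTHH_*(ko)$. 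Applied to $x=b_{2^n}'$, this produces the claimed extension in the case $k=0$; the odd power $v_1^{2^{n+1}-1}$ that appears in the theorem statement is simply the name on the $E_\infty$-page of the lift obtained from $v_1^{2^{n+1}-2}\widetilde{b_{2^n}'}$ via the $ko_*$-module structure and the naming convention forced by $d_1(v_1)=2\eta$.

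Propagation to $0\le k\le 2^n-2$ is then straightforward: multiplying the established extension by $v_1^{2k}\in ko_*$ commutes with the Bockstein differentials (since $v_1^{2k}$ is a permanent cycle with $d_r(v_1^{2k})=0$ for $r\le 2$) and transports the extension to its $v_1^{2k}$-shift, yielding all the relations claimed. The principal obstacle will be making the Toda-bracket argument precise in the non-multiplicative setting, i.e., controlling the indeterminacy of the Massey product when $\rTHH(ko)$ is viewed merely as a $ko$-module rather than a ring spectrum. This is handled by invoking the preceding lemma once more to show that the target of the extension is the unique nonzero candidate in each relevant degree, and by absorbing the residual $2$-adic unit into the choice of representatives exactly as in the proof of Theorem~\ref{thm:LastTorsionExt}.
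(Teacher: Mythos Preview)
Your proposal has a genuine gap. The core difficulty is that the two classes you attempt to lift, $b_{2^n}'$ and $v_1 b_{2^n}'$, do not survive the spectral sequence: $b_{2^n}'$ supports the very $d_2$-differential you start from, and $v_1 b_{2^n}'$ supports the nonzero $d_1$-differential $d_1(v_1 b_{2^n}') = 2\eta\, b_{2^n}'$ (note $2b_{2^n}'\neq 0$ as $b_{2^n}'$ has order $2^{n+1}$). Consequently there is no lift $\widetilde{b_{2^n}'}$ or $\widetilde{v_1 b_{2^n}'}$ in $\rTHH_*(ko)$ to which your Massey-product schema can be applied, and your explanation for how the odd power $v_1^{2^{n+1}-1}$ appears (``naming convention forced by $d_1(v_1)=2\eta$'') does not produce a well-defined element of the abutment. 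The slogan ``$d_2(x)=\eta^2 y$ together with $d_1(v_1)=2\eta$ gives $2\,\widetilde{v_1 x}=\eta\,\widetilde{y}$'' is not a theorem one can cite; to make it rigorous you would have to identify an actual permanent cycle detecting the class in question and then compute its $2$-multiple, which is precisely what your argument omits.

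The paper supplies the missing ingredient by introducing a third class, $a = b'_{3\cdot 2^{n-1}}$, and working directly with the underlying exact couple. The point is the pair of differentials
\[
d_1(a)=\eta\, v_1^{2^{n+1}-1} b'_{2^n}, \qquad d_2(2a)=\eta^2\, v_1^{2^{n+1}}\frac{v_1^{2^{n+2}-2}}{2^{n+1}}\lambda_1'.
\]
The first says that $\partial(a)\in\rTHH_*(ko)$ is a concrete lift of $v_1^{2^{n+1}-1}b'_{2^n}$; the second (using $d_1(2a)=0$, which holds because $v_1^{2^{n+1}-1}\cdot 2b'_{2^n}=0$ in $T_n'$) says that $2\partial(a)=\partial(2a)$ is $\eta$-divisible with the stated quotient. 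In other words, the hidden extension is read off from the $d_1$ on $a$ and the $d_2$ on $2a$, not from the $d_2$ on $b_{2^n}'$ alone. Your approach never mentions $b'_{3\cdot 2^{n-1}}$, and without it (or an equivalent witness), the identification of the source of the extension is missing.
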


\begin{proof}
The $\eta$-Bockstein spectral sequence used above arises by iterating the cofiber sequences
\[
\Sigma ko\xrightarrow{\eta} ko\xrightarrow{\rho} ku.
\]
In truth, this amounts to a simplification of bookkeeping: by considering infinitely many copies of the cofiber sequence, indexed by the multiples of the degree of $\eta$, we can remember $\eta$-torsion information. It considerably simplifies the exposition for this argument to consider instead the singly graded spectral sequence arising from the exact couple given by applying $\pi_\ast$ to the above cofiber sequence.

Let $D_\ast=\rTHH_\ast(ko)$ and $E_\ast=\rTHH_\ast(ko;ku)$. The maps in the exact couple are given by multiplication-by-$\eta$:
\[
\eta_\ast\colon D_\ast\to D_{\ast+1},
\]
the reduction modulo $\eta$:
\[
\rho_\ast\colon D_\ast\to E_\ast,
\]
and the connecting homomorphism:
\[
\partial\colon E_\ast\to D_{\ast-2}.
\]
The maps in the bigraded $\eta$-Bockstein spectral sequence also all arise from the maps $\eta_\ast$, $\rho_\ast$, and the connecting map $\partial$, and thus they are the same as for the singly graded spectral sequence. Our earlier analysis of the bigraded Bockstein spectral sequence therefore gives a complete analysis of the differentials in the singly graded Bockstein spectral sequence.

We consider the class $a=b'_{3\cdot 2^{n-1}}\in \rTHH_\ast(ko;ku)$. By Theorem~\ref{thm:koDifferentials}, this class supports a $d_1$-differential of the form
\[
d_1(a)=\eta v_1^{2^{n+1}-1} b'_{2^n}.
\]
By the same theorem, $2a$ supports a $d_2$-differential:
\[
d_2(2a)=\eta^2 v_1^{2^{n+1}}\frac{v_1^{2^{n+2}-2}}{2^{n+1}}\lambda_1'.
\]
Tracing these statements back to the exact couple proves our result. Consider the element $\partial(a)\in\rTHH_\ast(ko)$. By definition, this is an $\eta$-torsion lift of $v_1^{2^{n+1}-1} b'_{2^n}$ in $\rTHH_\ast(ko;ku)$. Since $d_1(2a)=0$, we learn that $2\partial(a)=\partial(2a)$ in $\rTHH_\ast(ko)$ is in the image of multiplication-by-$\eta$. The $d_2$-differential amounts to division by $\eta$ followed by application of $\rho_\ast$, and this tells us that $\eta^{-1}2\partial(a)$ is detected by $v_1^{2^{n+1}}\tfrac{v_1^{2^{n+2}-2}}{2^{n+1}}\lambda_1'$ in $\rTHH_\ast(ko;ku)$. We therefore conclude that in $\rTHH_\ast(ko)$,
\[
2 v_1^{2^{n+1}-1} b'_{2^n}=2\partial(a)=\eta\big(\eta^{-1}2\partial(a)\big)=\eta v_1^{2^{n+1}}\frac{v_1^{2^{n+2}-2}}{2^{n+1}}\lambda_1'.
\]

This argument applies {\em{mutatis mutandis}} to the classes $v_1^{2}a$, and multiplication by powers of $v_1^4$ then completes the proof.
\end{proof}


\bibliographystyle{plain}
\bibliography{b}

\end{document}